\renewcommand{\O}{\mathcal{O}}
\newcommand{\D}{\mathcal{D}}
\newcommand{\mS}{\mathcal{S}}
\newcommand{\E}{\mathbb{E}}
\newcommand{\R}{\mathbb{R}}
\newcommand{\N}{\mathbb{N}}
\renewcommand{\P}{\mathbb{P}}
\newcommand{\mZ}{\mathcal{Z}}
\newcommand{\mL}{\mathcal{L}}
\renewcommand{\mS}{\mathcal{S}}
\newcommand{\ov}[1]{\overline{#1}}
\newcommand{\ti}[1]{\tilde{#1}}
\newcommand{\un}[1]{\underline{#1}}
\newcommand{\nf}[1]{{\normalfont#1}}
\newcommand{\1}{\mathbbm{1}}
\newcommand{\ua}{\uparrow}
\newcommand{\da}{\downarrow}
\newcommand{\la}{\leftarrow}
\newcommand{\ra}{\rightarrow}
\newcommand{\me}{\text{\normalfont me}}
\newcommand{\ex}{\text{\normalfont ex}}
\newcommand{\cadlag}{c\`adl\`ag}
\newcommand{\Beta}{\mathrm{Beta}}
\numberwithin{equation}{section}
\numberwithin{figure}{section}
\theoremstyle{plain}
\theoremstyle{plain}
\newtheorem{thm}{\protect\theoremname}
\theoremstyle{remark}
\newtheorem*{rem*}{\protect\remarkname}
\theoremstyle{remark}
\newtheorem{rem}[]{\protect\remarkname}
\theoremstyle{plain}
\newtheorem*{assumption*}{\protect\assumptionname}
\theoremstyle{plain}
\newtheorem{lem}[thm]{\protect\lemmaname}
\theoremstyle{plain}
\newtheorem{cor}[thm]{\protect\corollaryname}
\theoremstyle{plain}
\newtheorem{prop}[thm]{\protect\propositionname}
\theoremstyle{definition}
\newtheorem*{example*}{\protect\examplename}
\newcommand{\BrownianMotion}[7]{
	\draw[#6] (#1,#2)
	\foreach \x in {1,...,#3}
	{-- ++ (#4,rand*#5)}
	node[below right] {#7};
}
  \providecommand{\algorithmname}{Algorithm}
  \providecommand{\assumptionname}{Assumption}
  \providecommand{\examplename}{Example}
  \providecommand{\lemmaname}{Lemma}
  \providecommand{\propositionname}{Proposition}
  \providecommand{\remarkname}{Remark}
\providecommand{\corollaryname}{Corollary}
\providecommand{\theoremname}{Theorem}
\begin{document}

\title[$\varepsilon$-strong simulation of convex minorants]{$\varepsilon$-strong simulation of the convex minorants of stable processes and meanders}

\subjclass[2010]{60G17, 60G51, 65C05, 65C50}

\author{Jorge Gonz\'alez C\'azares, Aleksandar Mijatovi\'c, \and
	Ger\'onimo Uribe Bravo}

\address{Department of Statistics, University of Warwick, \& The Alan Turing Institute, UK}

\email{jorge.gonzalez-cazares@warwick.ac.uk}

\address{Department of Statistics, University of Warwick, \& The Alan Turing Institute, UK}

\email{a.mijatovic@warwick.ac.uk}

\address{Universidad Nacional Aut\'onoma de M\'exico, M\'exico}

\email{geronimo@matem.unam.mx}

\begin{abstract}
Using marked Dirichlet processes we characterise the law of the convex minorant of 
the meander for a certain class of L\'evy processes, which includes subordinated 
stable and symmetric L\'evy processes. We apply this characterisaiton to construct 
$\varepsilon$-strong simulation ($\varepsilon$SS) algorithms for the convex 
minorant of stable meanders, the finite dimensional distributions of stable meanders 
and the convex minorants of weakly stable processes. We prove that the running times 
of our $\varepsilon$SS algorithms have finite exponential moments. We implement the 
algorithms in Julia 1.0 (available on GitHub) and present numerical examples 
supporting our convergence results.
\end{abstract}

\maketitle

\section{Introduction}

\subsection{Setting and motivation}

The universality of stable laws, processes and their path transformations makes them 
ubiquitous in probability theory and many areas of statistics and natural and social 
sciences (see e.g.~\citep{MR1745764,MR3160562} and the references therein). Brownian 
meanders, for instance, have been used in applications, ranging from stochastic 
partial differential equations~\citep{MR2019968} to the pricing of 
derivatives~\citep{MR3404143} and unbiased and exact simulation of the solutions of 
stochastic differential equations~\citep{MR2998701,MR3092549}. Analytic information 
is generally hard to obtain for either the maximum~\citep{MR3098676} and its temporal 
location~\citep[p.~2]{MR3925667} or for the related path 
transformations~\citep{MR2164035}, even in the case of the Brownian motion with 
drift~\citep{Iafrate2019}. Moreover, except in the case of Brownian motion with 
drift~\citep{MR1912205,MR2730908}, exact simulation of path-functionals of weakly 
stable processes is rarely available. In particular, exact simulation of functionals 
of stable meanders, which arise in numerous path transformations 
(see~\citep[Sec.~VIII]{MR1406564} and~\citep{MR3160578}), appears currently to be 
out of reach, as even the maximum of a stable processes can only be simulated as a 
univariate random variable in the strictly stable case~\citep{ExactSim}. A natural 
question (Q1) arises: \emph{does there exist a simulation algorithm with almost sure 
control of the error for stable meanders and path-functionals related to the extrema 
of weakly stable processes?} 

A complete description of the law of the convex minorant of L\'evy processes is given 
in~\citep{MR2978134}. Its relevance in the theory of simulation was highlighted in 
recent contributions~\citep{ExactSim,LevySupSim}, which developed sampling 
algorithms for certain path-functionals related to the extrema of L\'evy processes 
(see also Subsection~\ref{subsec:sim-conv-min} below). Thus, as L\'evy meanders 
arise in numerous path transformations and functionals of L\'evy processes~\citep{
	MR0436354,MR1465814,MR1836746,MR3160578,MR3531705,Iafrate2019}, 
it is natural to investigate their simulation problem via their convex minorants, 
leading to question (Q2): \emph{does there exist a tractable characterisation of the law 
of convex minorants of L\'evy meanders given in terms of the marginals of 
the corresponding L\'evy process?} 
This question is not trivial for the following two reasons. (I) A description of the convex 
minorant of a L\'evy meander is known only for a Brownian meander~\cite{MR2948693} 
and is given in terms of the marginals of the meander, not the marginals of the Brownian 
motion (cf. Subsection~\ref{subsubsec:convex_minorant} below). 
(II) Tractable descriptions of the convex minorant of a process $X$ typically rely on 
the exchangeability of the increments of $X$ in a fundamental 
way~\citep{MR2978134,2019arXiv190304745A}, a property clearly not satisfied when 
$X$ is a L\'evy meander. 


\subsection{Contributions}
In this paper we answer affirmatively both questions (Q1) \& (Q2) stated above. 
More precisely, in Theorem~\ref{thm:meander-minorant} below, we establish a 
characterisation of the law of the convex minorant of L\'evy meanders, based on 
marked Dirichlet processes, for L\'evy processes with constant probability of being 
positive (e.g. subordinated stable and symmetric L\'evy processes). In particular, 
Theorem~\ref{thm:meander-minorant} gives an alternative description of the law of 
the convex minorant of a Brownian meander to the one in~\cite{MR2948693}. Our 
description holds for the meanders of the aforementioned class of L\'evy processes, 
while~\cite{MR2948693} is valid for Brownian motion only 
(see Subsection~\ref{subsubsec:convex_minorant} below for more details). 

The description in Theorem~\ref{thm:meander-minorant} yields a Markovian structure 
(see Theorem~\ref{thm:CMSM} below) used to construct $\varepsilon$-strong 
simulation ($\varepsilon$SS) algorithms for the convex minorants of weakly stable 
processes and stable meanders, as well as for the finite-dimensional distributions of 
stable meanders. We apply our algorithms to the following problems: exact simulation of 
barrier crossing events; unbiased simulation of certain path-functionals of stable 
processes 
such as the moments of the crossing times of weakly stable processes; estimation of 
the moments of the normalised stable excursion. We report on the numerical 
performance in Section~\ref{sec:e-app_and_num}. 
Finally, we establish Theorem~\ref{thm:all_complexities} below stating that the 
running times of all of these algorithms have exponential moments, a property not 
seen before in the context of $\varepsilon$SS (cf. discussion in 
Subsection~\ref{subsec:lit}). Moreover, to the best of our knowledge, our results 
constitute the first simulation algorithms for stable meanders to appear in the 
literature. Due to the analytical intractability of their law, no simulation 
algorithms have been proposed so far. 

\subsection{Connections with the literature\label{subsec:lit}}
Our results are linked to seemingly disparate areas in pure and applied probability.
We discuss connections to each of the areas separately.

\subsubsection{Convex minorants of L\'evy meanders}
\label{subsubsec:convex_minorant}
The convex minorant of the path of a process on a fixed time interval
is the (pointwise) largest convex function dominated by the path. Typically,
the convex minorant is a piecewise linear function with a countably infinite number of 
linear segments known as \textit{faces},
see Subsection~\ref{subsec:approx_linear} for definition of such functions. 
Note that the chronological ordering of its faces  coincides
with the ordering by increasing slope.

A description of the convex minorant of a Brownian meander is given 
in~\cite{MR2948693}. To the best of our knowledge, the convex minorant of no other 
L\'evy meander has been characterised prior to the results presented below. The 
description in~\cite{MR2948693} of the faces of the convex minorant of a Brownian 
meander depends in a fundamental way 
on the analytical tractability of the density of the marginal of a Brownian meander 
at the final time, a quantity not available for other L\'evy processes. 
Furthermore, \cite{MR2948693} describes the faces of the convex minorant of Brownian 
meanders in chronological order, a strategy feasible in the Brownian case because 
the right end of the interval is the only accumulation point for the faces, but 
infeasible in general. For example, the convex minorant of a Cauchy meander has 
infinitely many faces in any neighborhood of the origin since the set of its slopes 
is a.s. dense in $\R$. Hence, if a generalisation of the description 
in~\cite{MR2948693} to other L\'evy meanders existed, it could work only if the sole 
accumulation point is the right end of the interval. Moreover, the \emph{scaling} and 
\emph{time inversion} properties of Brownian motion, not exhibited by other L\'evy 
processes~\cite{MR2136870,MR3912200}, are central in the description 
of~\cite{MR2948693}.

In contrast, the description in Theorem~\ref{thm:meander-minorant} holds for the L\'evy 
processes with constant probability of being positive, including Brownian motion, and 
does not require any explicit knowledge of the transition probabilities of the L\'evy 
meander. Moreover, to the best of our knowledge, ours is the only characterisation of 
the faces of the convex minorant in size-biased order where the underlying process 
does \emph{not} possess exchangeable increments (a key property in all such 
descriptions~\cite{MR2825583,MR2978134,2019arXiv190304745A}).

\subsubsection{$\varepsilon$-strong simulation algorithms}
$\varepsilon$SS is a procedure that generates a random element 
whose distance to the target random element is at most $\varepsilon$ almost surely. 
The tolerance level $\varepsilon>0$ is given \emph{a priori} and can be refined 
(see Section~\ref{sec:algorithms} for details). The notion of $\varepsilon$SS was 
introduced in~\cite{MR2995793} in the context of the simulation of a Brownian path 
on a finite interval. This framework was extended to the reflected Brownian motion 
in~\cite{MR3404635}, jump diffusions in~\cite{MR3449801}, multivariate It\^o 
diffusions in~\cite{MR3619789}, max-stable random fields in~\cite{BlanchetMaxStable} 
and the fractional Brownian motion in~\cite{eSS_fBM}. In general, an $\varepsilon$SS 
algorithm is required to terminate almost surely, but might have infinite expected 
complexity as is the case in~\cite{MR2995793,MR3449801}. The termination times of 
the algorithms in~\cite{MR3404635,MR3619789,eSS_fBM} are shown to have finite 
means. In contrast,  the running times of the $\varepsilon$SS algorithms in the 
present paper have finite exponential moments (see 
Theorem~\ref{thm:all_complexities} below), making them efficient in applications 
(see Subsection~\ref{subsec:numerics} below).


In addition to the strong control on the error, $\varepsilon$SS algorithms have been 
used in the literature as auxiliary procedures yielding exact and unbiased simulation 
algorithms~\cite{MR2995793,MR3404635,MR3092549,BlanchetPerfectSim,
	MR3780387,BlanchetMaxStable}. We apply our $\varepsilon$SS algorithms to obtain 
exact samples of indicator functions of the form $\1_A(\Lambda)$ for certain random 
elements $\Lambda$ and suitable sets $A$ (see Subsection~\ref{subsec:indicators} 
below). The exact simulation of these indicators in turn yields unbiased samples of 
other functionals of $\Lambda$, including those of the (analytically intractable) first 
passage times of weakly stable processes (see Subsection~\ref{subsec:e-app} below).

\subsubsection{Simulation algorithms based on convex minorants}
\label{subsec:sim-conv-min}
Papers~\citep{ExactSim,LevySupSim} developed simulation algorithms for the extrema 
of L\'evy processes in various settings.  We stress that algorithms and results 
in~\citep{ExactSim,LevySupSim} cannot be applied to the simulation of the 
path-functionals of L\'evy meanders considered in this paper. There are a number of 
reasons for this. First, the law of a L\'evy meander on a fixed time interval $[0,T]$ is 
given by the law of the original process $X$ \textit{conditioned} on $X$ being positive 
on $(0,T]$, an event of probability zero if, for instance, $X$ has infinite 
variation~\cite[Thm~47.1]{MR3185174}. Since the algorithms 
in~\citep{ExactSim,LevySupSim}  apply to the \textit{unconditioned} process, they are 
clearly of little direct use here. Second, the theoretical tools developed 
in~\citep{ExactSim,LevySupSim} are \textit{not} applicable to the problems considered 
in the present paper. Specifically, \cite{LevySupSim}  proposes a new simulation 
algorithm for the state $X_T$, the infimum and the time the infimum is attained on 
$[0,T]$  for a general (\textit{unconditioned}) L\'evy process $X$ and establishes the 
geometric decay of the error in $L^p$ of the corresponding  Monte Carlo algorithm. 
In contrast to the almost sure control of the simulation error for various 
path-functionals of $X$ \textit{conditioned} on $\{X_t>0:t\in(0,T]\}$ established in the 
present paper, the results in~\cite{LevySupSim} imply that the random error 
in~\cite{LevySupSim}, albeit very small in expectation, can take arbitrarily large values 
with positive probability. This makes the methods of~\cite{LevySupSim} completely 
unsuitable for the analysis of algorithms requiring an almost sure control of the error,
such as the ones in the present paper.

Paper~\cite{ExactSim} develops an exact simulation algorithm for the infimum of $X$
over the time interval $[0,T]$, where $X$ is an (unconditioned) strictly stable process. 
The scaling property of $X$ is crucial for the results in~\cite{ExactSim}. Thus, the results 
in~\cite{ExactSim} do \textit{not} apply to the simulation of the convex minorant (and 
cosequently the infimum) of the weakly stable processes considered here. 
This problem is solved in the present paper via a novel method based on tilting $X$ and 
then sampling the convex minorants of the corresponding meanders, see 
Subsection~\ref{subsec:e-SS_Levy} for details.

The dominated-coupling-from-the-past (DCFTP) method in~\cite{ExactSim} 
is based on a perpetuity equation 
$\mathcal{X}\overset{d}{=}V(U^{1/\alpha}\mathcal{X}+(1-U)^{1/\alpha}S)$
established therein, where $\mathcal{X}$ denotes the law of the supremum of a 
strictly stable  process $X$. This perpetuity appears similar to the one in 
Theorem~\ref{thm:CMSM}(c) below, characterising the law of $X_1$ 
conditioned on $\{X_t>0:t\in(0,1]\}$. However, the analysis in~\cite{ExactSim} 
\textit{cannot} be applied to the perpetuity in  Theorem~\ref{thm:CMSM}(c) for the 
following reason: the ``nearly'' uniform factor $V$ in the perpetuity above
($U$ is uniform on $[0,1]$ and $S$ is as in Theorem~\ref{thm:CMSM}(c))
is used in~\cite{ExactSim} to modify it so that the resulting Markov chain exhibits 
coalescence with positive probability, a necessary feature for the DCFTP to work.
Such a modification appears to be out of reach for the perpetuity in 
Theorem~\ref{thm:CMSM}(c) due to the absence of the multiplying factor, making exact 
simulation of stable meanders infeasible. However, even though the coefficients of the 
perpetuity in Theorem~\ref{thm:CMSM}(c) are dependent and have heavy tails, the 
Markovian structure for the error based on Theorem~\ref{thm:CMSM} allows us to 
define, in the present paper, a dominating process for the error whose return times to a 
neighbourhood of zero possess exponential moments. Since the dominating process 
can be simulated backwards in time, this leads to fast $\varepsilon$SS algorithms for the 
convex minorant of the stable meander and, consequently, of a weakly stable process.


\subsection{Organisation}
The remainder of the paper is structured as follows. In 
Section~\ref{sec:ConcaveMajorant} we 
state and prove Theorem~\ref{thm:meander-minorant}, which identifies the 
distribution of the convex minorant of a L\'evy meander in a certain class of L\'evy processes. In 
Section~\ref{sec:algorithms} we define $\varepsilon$SS and construct the main 
algorithms for the simulation from the laws of 
the convex minorants of both stable meanders and weakly stable processes, as well as from the
finite dimensional distributions of stable meanders. 
Numerical examples illustrating 
the methodology, its speed and stability are in Section~\ref{sec:e-app_and_num}. 
Section~\ref{sec:proofs} contains the analysis of the computational complexity (i.e. the proof of Theorem~\ref{thm:all_complexities}), the 
technical tools required in Section~\ref{sec:algorithms}
and the proof of Theorem~\ref{thm:CMSM} and its Corollary~\ref{cor:moments} 
(on the moments of stable meanders) used in Section~\ref{sec:e-app_and_num}.

\section{The law of the convex minorants of L\'evy meanders\label{sec:ConcaveMajorant}}

\subsection{Convex minorants and splitting at the minimum\label{subec:Con_Minor_Splitt}}

Let $X=(X_t)_{t\in[0,T]}$ be a L\'evy process on $[0,T]$, where $T>0$ is a 
fixed time horizon, started at zero $\P(X_0=0)=1$. If $X$ is a compound 
Poisson process with drift, exact simulation of the entire path of $X$ is 
typically available. We hence work with processes that are not compound
Poisson process with drift. By Doeblin's diffuseness 
lemma~\citep[Lem.~13.22]{MR1876169}, this assumption is equivalent to

\begin{assumption*}[D]
\label{asm:(D)} $\P(X_t=x)=0$ for all $x\in\R$
and for some (and then all) $t>0$.
\end{assumption*}

The convex minorant of a function $f:[a,b]\to\R$ is the pointwise
largest convex function $C(f):[a,b]\to\R$ such that $C(f)(t)\leq f(t)$ for all $t\in[a,b]$. 
Under~(\nameref{asm:(D)}), the convex minorant $C=C(X)$ of a path of $X$
turns out to be piecewise linear with infinitely many faces (i.e. linear segments). 
By convexity, sorting the faces by increasing slope coincides with their chronological 
ordering~\citep{MR2978134}. However, the ordering by increasing slopes is not 
helpful in determining the law of $C$. Instead, in the description of the law of 
$C$ in~\citep{MR2978134}, the faces are selected using size-biased sampling 
(see e.g.~\cite[Sec.~4.1]{LevySupSim}). 

\begin{figure}[H] 
	\centering{}\resizebox{.32\linewidth}{!}{
		\begin{subfigure}[l]{0.5\textwidth}
			\begin{tikzpicture}
			\pgfmathsetseed{101101}
			\BrownianMotion{0}{0}{100}{0.005}{-0.2}{black}{};
			\BrownianMotion{101*.005}{-2.5}{123}{0.005}{-0.2}{black}{};
			\BrownianMotion{225*.005}{-2.9}{54}{0.005}{-0.2}{black}{};
			\BrownianMotion{280*.005}{-2.4}{172}{0.005}{-0.2}{black}{};
			\BrownianMotion{453*.005}{-0.8}{27}{0.005}{-0.2}{black}{};
			\BrownianMotion{481*.005}{-2.0}{80}{0.005}{-0.2}{black}{};
			\BrownianMotion{562*.005}{-1.3}{69}{0.005}{-0.2}{black}{};
			\BrownianMotion{632*.005}{-4.6}{126}{0.005}{-0.2}{black}{};
			\BrownianMotion{759*.005}{-0.4}{208}{0.005}{-0.2}{black}{};
			\BrownianMotion{968*.005}{-0.8}{67}{0.005}{-0.2}{black}{};
			\BrownianMotion{1036*.005}{-1.8}{132}{0.005}{-0.2}{black}{};
			\BrownianMotion{1169*.005}{-0.1}{164}{0.005}{-0.2}{black}{};
			\BrownianMotion{1324*.005}{-1.0}{45}{0.005}{-0.2}{black}{};
			
			
			\draw[red] (0,0) -- (6*.005,-.455) 
			-- (18*.005,-.95)
			-- (180*.005,-4.6)
			-- (199*.005,-4.96)
			-- (635*.005,-5.28)
			-- (668*.005,-5.31)
			-- (745*.005,-4.99)
			-- (1148*.005,-3.0)
			-- (1275*.005,-2.08)
			-- (1337*.005,-1.4)
			-- (1362*.005,-.89)
			-- (1369*.005,-.68)
			node[right] {\Large $C$};
			
			\node[circle, fill=black, scale=0.5] at (933*.005,-188*3/403-215*5.03/403) {}{};
			\node[circle, fill=black, scale=0.5, label=above right:{\Large $U_1$}] at (933*.005,0) {}{};
			\draw[dashed] (933*.005,0) -- (933*.005,-188*3/403-215*5.03/403);
			
			\node[circle, fill=blue, scale=0.5] at (745*.005,-5.03) {}{};
			\node[circle, fill=blue, scale=0.5, label=above: {\Large \color{blue}$g_1$}] at (745*.005,0) {}{};
			\node[circle, fill=blue, scale=0.5, label=left: {\Large \color{blue}$C(g_1)$}] at (0,-5.03) {}{};
			\draw[blue,dashed] (745*.005,0) -- (745*.005,-5.03);
			\draw[blue,dashed] (0,-5.03) -- (745*.005,-5.03);
			
			\node[circle, fill=blue, scale=0.5] at (1148*.005,-3) {}{};
			\node[circle, fill=blue, scale=0.5, label=above: {\Large \color{blue}$d_1$}] at (1148*.005,0) {}{};
			\node[circle, fill=blue, scale=0.5, label=left: {\Large \color{blue}$C(d_1)$}] at (0,-3) {}{};
			\draw[blue,dashed] (1148*.005,0) -- (1148*.005,-3);
			\draw[blue,dashed] (0,-3) -- (1148*.005,-3);
			
			\draw [thin, draw=gray, ->] (0,0) -- (1400*.005,0);
			\draw [thin, draw=gray, ->] (0,0) -- (0,-5.5);
			
			\node[circle, fill=black, scale=0.5, label=above:{\Large $T$}] at (1369*.005,0) {}{};
			\draw[very thick, draw=blue] (0,0) -- (1369*.005,0);
			\end{tikzpicture}
			\subcaption{\Large First face}
	\end{subfigure}}
	\centering{}\resizebox{.32\linewidth}{!}{
		\begin{subfigure}[c]{0.5\textwidth}
			\begin{tikzpicture}
			\pgfmathsetseed{101101}
			\BrownianMotion{0}{0}{100}{0.005}{-0.2}{black}{};
			\BrownianMotion{101*.005}{-2.5}{123}{0.005}{-0.2}{black}{};
			\BrownianMotion{225*.005}{-2.9}{54}{0.005}{-0.2}{black}{};
			\BrownianMotion{280*.005}{-2.4}{172}{0.005}{-0.2}{black}{};
			\BrownianMotion{453*.005}{-0.8}{27}{0.005}{-0.2}{black}{};
			\BrownianMotion{481*.005}{-2.0}{80}{0.005}{-0.2}{black}{};
			\BrownianMotion{562*.005}{-1.3}{69}{0.005}{-0.2}{black}{};
			\BrownianMotion{632*.005}{-4.6}{113}{0.005}{-0.2}{black}{};
			\phantom{
				\BrownianMotion{632*.005}{-4.6}{13}{0.005}{-0.2}{black}{};
				\BrownianMotion{759*.005}{-0.4}{208}{0.005}{-0.2}{black}{};
				\BrownianMotion{968*.005}{-0.8}{67}{0.005}{-0.2}{black}{};
				\BrownianMotion{1036*.005}{-1.8}{113}{0.005}{-0.2}{black}{};
			}
			\BrownianMotion{1149*.005}{-3.0}{19}{0.005}{-0.2}{black}{};
			\BrownianMotion{1169*.005}{-0.1}{164}{0.005}{-0.2}{black}{};
			\BrownianMotion{1324*.005}{-1.0}{45}{0.005}{-0.2}{black}{};
			
			
			\draw[red] (0,0) -- (6*.005,-.455) 
			-- (18*.005,-.95)
			-- (180*.005,-4.6)
			-- (199*.005,-4.96)
			-- (635*.005,-5.28)
			-- (668*.005,-5.31)
			--(745*.005,-4.99){};
			\draw[red] (1148*.005,-3.0)
			-- (1275*.005,-2.08)
			-- (1337*.005,-1.4)
			-- (1362*.005,-.89)
			-- (1369*.005,-.68)
			node[right] {\Large $C$};
			
			\node[circle, fill=black, scale=0.5] at (433*.005,-202*5.33/436-234*5.02/436) {}{};
			\node[circle, fill=black, scale=0.5, label=above:{\Large $U_2$}] at (433*.005,0) {}{};
			\draw[dashed] (433*.005,0) -- (433*.005,-202*5.33/436-234*5.02/436);
			
			\node[circle, fill=blue, scale=0.5] at (199*.005,-5.02) {}{};
			\node[circle, fill=blue, scale=0.5, label=above: {\Large \color{blue}$g_2$}] at (199*.005,0) {}{};
			\node[circle, fill=blue, scale=0.5, label=above left: {\Large \color{blue}$C(g_2)$}] at (0,-5.02) {}{};
			\draw[blue,dashed] (199*.005,0) -- (199*.005,-5.02);
			\draw[blue,dashed] (0,-5.02) -- (199*.005,-5.02);
			
			\node[circle, fill=blue, scale=0.5] at (635*.005,-5.33) {}{};
			\node[circle, fill=blue, scale=0.5, label=above: {\Large \color{blue}$d_2$}] at (635*.005,0) {}{};
			\node[circle, fill=blue, scale=0.5, label=left: {\Large \color{blue}$C(d_2)$}] at (0,-5.33) {}{};
			\draw[blue,dashed] (635*.005,0) -- (635*.005,-5.33);
			\draw[blue,dashed] (0,-5.33) -- (635*.005,-5.33);
			
			\draw [thin, draw=gray, ->] (0,0) -- (1400*.005,0);
			\draw [thin, draw=gray, ->] (0,0) -- (0,-5.5);
			\draw [very thick, draw=blue] (0,0) -- (745*.005,0);
			\draw [very thick, draw=blue] (1148*.005,0) -- (1369*.005,0);
			
			\node[circle, fill=black, scale=0.5, label=above:{\Large $T$}] at (1369*.005,0) {}{};
			
			\end{tikzpicture}
			\subcaption{\Large Second face}    
	\end{subfigure}}
	\centering{}\resizebox{.32\linewidth}{!}{
		\begin{subfigure}[r]{0.5\textwidth}
			\begin{tikzpicture}
			\pgfmathsetseed{101101}
			\BrownianMotion{0}{0}{100}{0.005}{-0.2}{black}{};
			\BrownianMotion{101*.005}{-2.5}{98}{0.005}{-0.2}{black}{};
			\phantom{
				\BrownianMotion{101*.005}{-2.5}{25}{0.005}{-0.2}{black}{};
				\BrownianMotion{225*.005}{-2.9}{54}{0.005}{-0.2}{black}{};
				\BrownianMotion{280*.005}{-2.4}{172}{0.005}{-0.2}{black}{};
				\BrownianMotion{453*.005}{-0.8}{27}{0.005}{-0.2}{black}{};
				\BrownianMotion{481*.005}{-2.0}{80}{0.005}{-0.2}{black}{};
				\BrownianMotion{562*.005}{-1.3}{69}{0.005}{-0.2}{black}{};
				\BrownianMotion{632*.005}{-4.6}{2}{0.005}{-0.2}{black}{};
			}
			\BrownianMotion{635*.005}{-4.95}{111}{0.005}{-0.2}{black}{};
			\phantom{
				\BrownianMotion{632*.005}{-4.6}{13}{0.005}{-0.2}{black}{};
				\BrownianMotion{759*.005}{-0.4}{208}{0.005}{-0.2}{black}{};
				\BrownianMotion{968*.005}{-0.8}{67}{0.005}{-0.2}{black}{};
				\BrownianMotion{1036*.005}{-1.8}{113}{0.005}{-0.2}{black}{};
			}
			\BrownianMotion{1149*.005}{-3.0}{19}{0.005}{-0.2}{black}{};
			\BrownianMotion{1169*.005}{-0.1}{164}{0.005}{-0.2}{black}{};
			\BrownianMotion{1324*.005}{-1.0}{45}{0.005}{-0.2}{black}{};
			
			\draw[red] (0,0) -- (6*.005,-.455) 
			-- (18*.005,-.95)
			-- (180*.005,-4.6)
			-- (199*.005,-4.96){};
			\draw[red] (635*.005,-5.28)
			-- (668*.005,-5.31)
			--(745*.005,-4.99){};
			\draw[red] (1148*.005,-3.0)
			-- (1275*.005,-2.08)
			-- (1337*.005,-1.4)
			-- (1362*.005,-.89)
			-- (1369*.005,-.68)
			node[right] {\Large $C$};
			
			\node[circle, fill=black, scale=0.5] at (150*.005,-30*.95/162-132*4.6/162) {}{};
			\node[circle, fill=black, scale=0.5, label=above:{\Large $U_3$}] at (150*.005,0) {}{};
			\draw[dashed] (150*.005,0) -- (150*.005,-30*.95/162-132*4.6/162);
			
			\node[circle, fill=blue, scale=0.5] at (18*.005,-.95) {}{};
			\node[circle, fill=blue, scale=0.5, label=above:{\Large \color{blue}$g_3$}] at (18*.005,0) {}{};
			\node[circle, fill=blue, scale=0.5, label=below left:{\Large \color{blue}$C(g_3)$}] at (0,-.95) {}{};
			\draw[blue,dashed] (18*.005,0) -- (18*.005,-.95);
			\draw[blue,dashed] (0,-.95) -- (18*.005,-.95);
			
			\node[circle, fill=blue, scale=0.5] at (180*.005,-4.6) {}{};
			\node[circle, fill=blue, scale=0.5, label=above right:{\Large \color{blue}$d_3$}] at (180*.005,0) {}{};
			\node[circle, fill=blue, scale=0.5, label=below left:{\Large \color{blue}$C(d_3)$}] at (0,-4.6) {}{};
			\draw[blue,dashed] (180*.005,0) -- (180*.005,-4.6);
			\draw[blue,dashed] (0,-4.6) -- (180*.005,-4.6);
			
			\draw [thin, draw=gray, ->] (0,0) -- (1400*.005,0);
			\draw [thin, draw=gray, ->] (0,0) -- (0,-5.5);
			\draw [very thick, draw=blue] (0,0) -- (199*.005,0);
			\draw [very thick, draw=blue] (635*.005,0) -- (745*.005,0);
			\draw [very thick, draw=blue] (1148*.005,0) -- (1369*.005,0);
			
			\node[circle, fill=black, scale=0.5, label=above:{\Large $T$}] at (1369*.005,0) {}{};
			
			\end{tikzpicture}
			\subcaption{\Large Third face}     
	\end{subfigure}}
	\caption{\footnotesize Selecting the first three faces of the concave 
		majorant: the total length of the thick blue segment(s) on the abscissa 
		equal the stick sizes $T$, $T-(d_1-g_1)$ and $T-(d_1-g_1)-(d_2-g_2)$,
		respectively. The independent random variables $U_1,U_2,U_3$ are uniform 
		on the sets $[0,T]$, $[0,T]\setminus(g_1,d_1)$, 
		$[0,T]\setminus\bigcup_{i=1}^2(g_i,d_i)$, respectively. Note that the residual 
		length after $n$ samples is $L_n$.}\label{fig:FacesCM}
\end{figure}
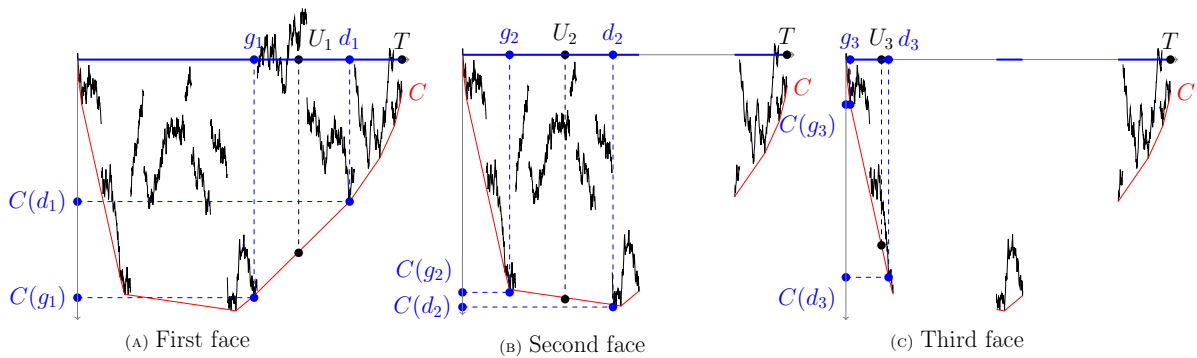

Put differently, choose the faces of $C$ \textit{independently at random 
uniformly on lengths}, as shown in Figure~\ref{fig:FacesCM}, and let $g_n$ and 
$d_n$ be the left and right ends of the $n$-th face, respectively. One way of inductively 
constructing the variables $(U_n)_{n\in\N}$  (and hence the 
sequence of the faces of $C$) in Figure~\ref{fig:FacesCM} is from an independent 
identically distributed (iid) sequence $\mathcal{V}$ of uniforms on $[0,T]$, 
which is independent of $X$: $U_1$ is the first value in $\mathcal{V}$ and, 
for any $n\in\N=\{ 1,2,\ldots\}$, $U_{n+1}$ is the first value in $\mathcal{V}$ 
after $U_n$ not contained in the union of intervals $\bigcup_{i=1}^n(g_i,d_i)$. 
Then, for any $n\in\N$, the length of the $n$-th face is $\ell_n=d_n-g_n$ and its 
height is $\xi_n=C(d_n)-C(g_n)$. In~\citep[Thm~1]{MR2978134}, a complete 
description of the law of the sequence $((\ell_n,\xi_n))_{n\in\N}$ is given. In 
order to generalise this results to L\'evy meanders, it is helpful to state the 
characterisation in terms of Dirichlet processes, see~\eqref{eq:levy-minorant} 
in Section~\ref{subsec:MarkedDirProc} below.

The behaviour of certain statistics of the path of $X$, such as the minimum
$\un{X}_T=\inf_{t\in[0,T]}X_t$ and its temporal location 
$\tau_T=\tau_{[0,T]}(X)=\inf\{t>0:\min\{X_t,X_{t-}\}=\un{X}_T\}$, is determined 
by that of the faces of $C$ whose heights are negative (we assume throughout 
that $X$ is right-continuous with left limits (\cadlag) and denote 
$X_{t-}=\lim_{s\ua t}X_s$ for $t>0$ and $X_{0-}=0$). Analysis of their behaviour 
amounts to the analysis of the convex minorants of the  pre- and post-minimum 
processes $X^\la=(X^\la_t)_{t\in[0,T]}$ and $X^\ra=(X^\ra_t)_{t\in[0,T]}$, where 
\begin{equation}\label{def:pre_post}
X^{\la}_t =
\begin{cases}
X_{(\tau_T-t)-}-\un{X}_T, & t\in[0,\tau_T],\\
\dag, & t\in(\tau_T,T],
\end{cases} 
\text{ and}
\enskip
X^{\ra}_t =
\begin{cases}
X_{\tau_T+t}-\un{X}_T, & t\in[0,T-\tau_T],\\
\dag, & t\in(\tau_T,T],
\end{cases}
\end{equation}
respectively ($\dag$ denotes a cemetery state, required only to define the 
processes on $[0,T]$). Clearly, as indicated by 
Figure~\ref{fig:SplitConvexMinorant}, $C$ may be recovered from the convex 
minorants $C^{\la}=C(X^\la)$ and $C^\ra=C(X^\ra)$
of $X^{\la}|_{[0,\tau_T]}$ and $X^{\ra}|_{[0,T-\tau_T]}$,
respectively. For convenience, we suppress the time interval in the notation for  
$C^{\la}=C(X^\la)$ and $ C^{\ra}=C(X^\ra)$.
In particular, throughout the paper, $C^{\la}$ and $C^{\ra}$
are the convext minorants of $X^\la$ and $X^\ra$, respectively,
only while the processes are ``alive''.

\begin{figure}[H] 
	\centering{}\resizebox{.32\linewidth}{!}{
		\begin{subfigure}[l]{0.55\textwidth}
			\begin{tikzpicture}
			\pgfmathsetseed{101101}
			\BrownianMotion{0}{0}{100}{0.005}{-0.2}{black}{};
			\BrownianMotion{101*.005}{-2.5}{123}{0.005}{-0.2}{black}{};
			\BrownianMotion{225*.005}{-2.9}{54}{0.005}{-0.2}{black}{};
			\BrownianMotion{280*.005}{-2.4}{172}{0.005}{-0.2}{black}{};
			\BrownianMotion{453*.005}{-0.8}{27}{0.005}{-0.2}{black}{};
			\BrownianMotion{481*.005}{-2.0}{80}{0.005}{-0.2}{black}{};
			\BrownianMotion{562*.005}{-1.3}{69}{0.005}{-0.2}{black}{};
			\BrownianMotion{632*.005}{-4.6}{126}{0.005}{-0.2}{black}{};
			\BrownianMotion{759*.005}{-0.4}{208}{0.005}{-0.2}{black}{};
			\BrownianMotion{968*.005}{-0.8}{67}{0.005}{-0.2}{black}{};
			\BrownianMotion{1036*.005}{-1.8}{132}{0.005}{-0.2}{black}{};
			\BrownianMotion{1169*.005}{-0.1}{164}{0.005}{-0.2}{black}{};
			\BrownianMotion{1324*.005}{-1.0}{45}{0.005}{-0.2}{black}{\Large $X$};
			
			
			\draw[red] (0,0) -- (6*.005,-.455) 
			-- (18*.005,-.95)
			-- (180*.005,-4.6)
			-- (199*.005,-4.98)
			-- (635*.005,-5.28)
			-- (668*.005,-5.31){};
			\draw[red, dashed] (668*.005,-5.31)
			-- (745*.005,-4.99)
			-- (1148*.005,-3.0)
			-- (1275*.005,-2.08)
			-- (1337*.005,-1.4)
			-- (1362*.005,-.89)
			-- (1369*.005,-.68)
			node[above right] {\Large $C$};
			
			\draw [thin, draw=gray, ->] (0,0) -- (1400*.005,0);
			\draw [thin, draw=gray, ->] (0,0) -- (0,-5.5);
			
			\node[circle, fill=black, scale=0.5, label=above:{\Large $\tau_T$}] at (668*.005,0) {}{};
			\draw[densely dotted] (668*.005,0) -- (668*.005,-5.31){};
			
			\node[circle, fill=black, scale=0.5, label=above:{\Large $T$}] at (1369*.005,0) {}{};
			\end{tikzpicture}
			\subcaption{\Large $C$}
	\end{subfigure}}
	\centering{}\resizebox{.32\linewidth}{!}{
		\begin{subfigure}[l]{0.55\textwidth}
			\begin{tikzpicture}
			\pgfmathsetseed{101101}
			\BrownianMotion{668*.005-0}{5.31-0}{100}{-0.005}{-0.2}{black}{};
			\BrownianMotion{668*.005-101*.005}{5.31-2.5}{123}{-0.005}{-0.2}{black}{};
			\BrownianMotion{668*.005-225*.005}{5.31-2.9}{54}{-0.005}{-0.2}{black}{};
			\BrownianMotion{668*.005-280*.005}{5.31-2.4}{172}{-0.005}{-0.2}{black}{};
			\BrownianMotion{668*.005-453*.005}{5.31-0.8}{27}{-0.005}{-0.2}{black}{};
			\BrownianMotion{668*.005-481*.005}{5.31-2.0}{80}{-0.005}{-0.2}{black}{};
			\BrownianMotion{668*.005-562*.005}{5.31-1.3}{69}{-0.005}{-0.2}{black}{};
			\BrownianMotion{668*.005-632*.005}{5.31-4.6}{30}{-0.005}{-0.2}{black}{};
			\phantom{
				\BrownianMotion{0}{5.31-4.6}{90}{0.005}{-0.2}{black}{};
				\BrownianMotion{0}{5.31-0.4}{208}{0.005}{-0.2}{black}{};
				\BrownianMotion{0}{5.31-0.8}{67}{0.005}{-0.2}{black}{};
				\BrownianMotion{0}{5.31-1.8}{132}{0.005}{-0.2}{black}{};
				\BrownianMotion{0}{5.31-0.1}{164}{0.005}{-0.2}{black}{};
				\BrownianMotion{0}{5.31-1.0}{45}{0.005}{-0.2}{black}{};
			}
			\node[circle, scale=0.5, label=above right:{\Large $X^\la$}] at (668*.005,5.31) {}{};
			\node[circle, scale=0.5, label=below right:{\Large \color{red}$C^\la$}] at (668*.005,5.31) {}{};
			
			\draw[red, mark=+] 
			(668*.005-0,5.31-0) 
			-- (668*.005-6*.005,5.31-.455)
			-- (668*.005-18*.005,5.31-.95)
			-- (668*.005-180*.005,5.31-4.6)
			-- (668*.005-199*.005,5.31-4.98)
			-- (668*.005-635*.005,5.31-5.28)
			-- (668*.005-668*.005,5.31-5.31) 
			{};
			
			\draw [thin, draw=gray, ->] (0,0) -- (1400*.005,0);
			\draw [thin, draw=gray, ->] (0,0) -- (0,5.5);
			
			\node[circle, fill=black, scale=0.5, label=below:{\Large $\tau_T$}] at (668*.005,0) {}{};
			\draw[densely dotted] (668*.005,0) -- (668*.005,5.31){};
			
			\node[circle, scale=0.5, label=below:{\Large $T$}] at (1369*.005,0) {}{};
			\end{tikzpicture}
			\subcaption{\Large $C^\la$}
	\end{subfigure}}
	\centering{}\resizebox{.32\linewidth}{!}{
		\begin{subfigure}[l]{0.55\textwidth}
			\begin{tikzpicture}
			\pgfmathsetseed{101101}
			\phantom{
				\BrownianMotion{0}{5.31-0}{100}{0.005}{-0.2}{black}{};
				\BrownianMotion{0}{5.31-2.5}{123}{0.005}{-0.2}{black}{};
				\BrownianMotion{0}{5.31-2.9}{54}{0.005}{-0.2}{black}{};
				\BrownianMotion{0}{5.31-2.4}{172}{0.005}{-0.2}{black}{};
				\BrownianMotion{0}{5.31-0.8}{27}{0.005}{-0.2}{black}{};
				\BrownianMotion{0}{5.31-2.0}{80}{0.005}{-0.2}{black}{};
				\BrownianMotion{0}{5.31-1.3}{69}{0.005}{-0.2}{black}{};
				\BrownianMotion{0}{5.31-4.6}{30}{0.005}{-0.2}{black}{};
			}
			\BrownianMotion{668*.005-668*.005}{5.31-5.13}{96}{0.005}{-0.2}{black}{};
			\BrownianMotion{759*.005-668*.005}{5.31-0.4}{208}{0.005}{-0.2}{black}{};
			\BrownianMotion{968*.005-668*.005}{5.31-0.8}{67}{0.005}{-0.2}{black}{};
			\BrownianMotion{1036*.005-668*.005}{5.31-1.8}{132}{0.005}{-0.2}{black}{};
			\BrownianMotion{1169*.005-668*.005}{5.31-0.1}{164}{0.005}{-0.2}{black}{};
			\BrownianMotion{1324*.005-668*.005}{5.31-1.0}{45}{0.005}{-0.2}{black}{};
			\draw[red, dashed] 
			(668*.005-668*.005,5.31-5.31) 
			-- (745*.005-668*.005,5.31-4.99)
			-- (1148*.005-668*.005,5.31-3.0)
			-- (1275*.005-668*.005,5.31-2.08)
			-- (1337*.005-668*.005,5.31-1.4)
			-- (1362*.005-668*.005,5.31-.89)
			-- (1369*.005-668*.005,5.31-.68)
			node[above right] {};
			
			\draw [thin, draw=gray, ->] (0,0) -- (1400*.005,0);
			\draw [thin, draw=gray, ->] (0,0) -- (0,5.5);
			
			\node[circle, scale=0.5, label=above right:{\Large $X^\ra$}] at (1369*.005-668*.005,5.31-.68) {}{};
			\node[circle, scale=0.5, label=below right:{\Large \color{red} $C^\ra$}] at (1369*.005-668*.005,5.31-.68) {}{};
			
			\node[circle, fill=black, scale=0.5, label=below:{\Large $T-\tau_T$}] at (1369*.005-668*.005,0) {}{};
			\draw[densely dotted] (1369*.005-668*.005,0) -- (1369*.005-668*.005,5.31-.68){};
			
			\node[circle, scale=0.5, label=below:{\Large $T$}] at (1369*.005,0) {}{};
			\end{tikzpicture}
			\subcaption{\Large $C^\ra$}
	\end{subfigure}}
	\caption{\footnotesize Decomposing $(X,C)$ into 
		$(X^\la, C^\la)$ and $(X^\ra, C^\ra)$.}
	\label{fig:SplitConvexMinorant}
\end{figure}

\subsection{Convex minorants as marked Dirichlet processes}\label{subsec:MarkedDirProc}
Our objective now is to obtain a description of the law of  the convex minorants 
$C^\la$ and $C^\ra$. For any $n\in\N$ and positive reals 
$\theta_0,\ldots,\theta_n>0$, the \textit{Dirichlet distribution} with parameter 
$(\theta_0,\ldots,\theta_n)$ is given by a density proportional to $x\mapsto
\prod_{i=0}^nx_i^{\theta_i-1}$, supported on the standard $n$-dimensional 
symplex in $\R^{n+1}$ (i.e. the set of points $x=(x_0,\ldots,x_n)$ satisfying 
$\sum_{i=0}^n x_i=1$ and $x_i\in(0,1)$ for $i\in\{0,\ldots,n\}$). In the 
special case $n=1$, we get  the beta distribution $\Beta(\theta_0,\theta_1)$ on 
$[0,1]$. In particular, the uniform distribution equals $U(0,1)=\Beta(1,1)$ and, 
for any $\theta>0$, we append the limiting cases $\Beta(\theta,0)=\delta_1$ 
and $\Beta(0,\theta)=\delta_0$, where $\delta_x$ is the Dirac measure at $x\in\R$.

Let $(\mathbb{X},\mathcal{X},\mu)$ be a measure space with
$\mu(\mathbb{X})\in(0,\infty)$. A random probability measure $\Xi$ (i.e. a 
stochastic process indexed by the sets in $\mathcal{X}$) is a \textit{Dirichlet 
process} on $(\mathbb{X},\mathcal{X})$ based on the finite measure $\mu$ if for 
any measurable partition $\{B_0,\ldots,B_n\}\subset\mathcal{X}$ (i.e. $n\in\N$, 
$B_i\cap B_j=\emptyset$ for all distinct $i,j\in\{0,\ldots,n\}$, $\bigcup_{i=0}^n
B_i=\mathbb{X}$ and $\mu(B_i)>0$ for all $i\in\{0,\ldots,n\}$), the vector 
$(\Xi(B_0),\ldots,\Xi(B_n))$ is a Dirichlet random vector with parameters 
$(\mu(B_0),\ldots,\mu(B_n))$. We use the notation $\Xi\sim\D_\mu$ throughout.

Define the sets $\mZ^n=\{k\in\mathbb{Z}:k<n\}$ and $\mZ^n_m=\mZ^n\setminus\mZ^m$ 
for $n,m\in\mathbb{Z}$ and adopt the convention $\prod_{k\in\emptyset}=1$ and 
$\sum_{k\in\emptyset}=0$ ($\mathbb{Z}$ denotes the integers). 
Sethuraman~\citep{MR1309433} introduced the construction 
\begin{equation}\label{eq:Dirichlet}
\Xi=\sum_{n=1}^\infty\pi_{n}\delta_{x_n}\sim\D_\mu,
\end{equation}
where $(x_n) _{n\in\N}$ is an iid sequence with distribution 
$\mu(\mathbb{X})^{-1}\mu$ and $(\pi_n)_{n\in\N}$ is a stick-breaking process 
based on $\Beta(1,\mu(\mathbb{X}))$ constructed as follows:
$\pi_n=\beta_n\prod_{k\in\mZ_1^n}(1-\beta_k)$ where $(\beta_n)_{n\in\N}$ is an 
iid sequence with distribution $\Beta(1,\mu(\mathbb{X}))$. 

Consider a further measurable space $(\mathbb{Y},\mathcal{Y})$ and a triple 
$(\theta,\mu,\kappa)$, where $\theta>0$, $\mu$ is a finite measure on 
$(\mathbb{X},\mathcal{X})$ and $\kappa:[0,\theta]\times\mathbb{X}\to\mathbb{Y}$ 
is a measurable function. Let $\Xi$ be as in~\eqref{eq:Dirichlet}. A
\textit{marked Dirichlet process} on $(\mathbb{Y},\mathcal{Y})$ is given by the 
random probability measure $\sum_{n=1}^\infty\pi_n\delta_{\kappa(\theta\pi_n,x_n)}$ 
on $(\mathbb{Y},\mathcal{Y})$. We denote its distribution by
$\D_{(\theta,\mu,\kappa)}$.

Let $F(t,x)=\P(X_t\leq x)$, $x\in\R$, be the distribution function of $X_t$ for 
$t\geq0$ and let $G(t,\cdot)$ be the generalised right inverse of $F(t,\cdot)$. 
Hence $G(t,U)$ follows the law $F(t,\cdot)$ for any uniform random variable 
$U\sim U(0,1)$. Given these definitions,~\citep[Thm~1]{MR2978134} can be 
rephrased as 
\begin{equation}\label{eq:levy-minorant}
T^{-1}\sum_{n=1}^\infty \ell_n\delta_{\xi_n}=
T^{-1}\sum_{n=1}^\infty(d_n-g_n)\delta_{ C(d_n)- C(g_n)}\sim
\D_{(T,U(0,1),G)},
\end{equation}
where $C$ is the convex minorant of $X$ over the interval $[0,T]$, with the 
length and height of the $n$-th face given by $\ell_n=d_n-g_n$ and $\xi_n=
C(d_n)-C(g_n)$, respectively, as defined in Section~\ref{subec:Con_Minor_Splitt} 
above. Consequently, the faces of $C$ are easy to simulate if one can sample 
from $F(t,\cdot)$. Indeed, $(\ell_n/T)_{n\in\N}$ has the law of the 
stick-breaking process with uniform sticks and, given $\ell_n$, we have 
$\xi_n\sim F(\ell_n,\cdot)$ for all $n\in\N$. 

It is evident that the size-biased sampling of the faces of $C^\la$ and $C^\ra$, 
analogous to the one described in the second paragraph of Section~\ref{subec:Con_Minor_Splitt} for the faces of $C$ (see also 
Figure~\ref{fig:SplitConvexMinorant}), can be applied on the intervals
$[0,\tau_T]$ and $[0,T-\tau_T]$, respectively. However, in order to 
characterise the respective laws of the two sequences of lengths and heights, 
we need to restrict to the following class of L\'evy processes. 
\begin{assumption*}[P]\label{asm:(P)} 
The map $t\mapsto\P(X_t>0)$ is constant for $t>0$ and equals some 
$\rho\in[0,1]$.
\end{assumption*}

The family of L\'evy processes that satisfy (\nameref{asm:(P)}) has, to the best of 
our knowledge, not been characterised in terms of the characteristics of the 
process $X$ (e.g. its L\'evy measure or characteristic exponent). However, it is 
easily seen that it includes the following wide variety of examples: symmetric L\'evy 
processes with $\rho=1/2$, stable processes with $\rho$ given by its positivity 
parameter (see e.g.~\cite[App.~A]{ExactSim}) and subordinated stable processes 
with $\rho$ equal to the positivity parameter of the stable process. Note also that 
under~(\nameref{asm:(P)}), the random variable $G(t,U)$ (with $U$ uniform on 
$[0,1]$) is negative if and only if $U\leq 1-\rho$.

\begin{prop}\label{prop:pre_post}
Let $X$ be a L\'{e}vy process on $[0,T]$ satisfying~(\nameref{asm:(D)}) 
and~(\nameref{asm:(P)}) for $\rho\in[0,1]$ with pre- and post-minimum processes
$X^\la$ and $X^\ra$, respectively, defined in~\eqref{def:pre_post}. Let 
$((\ell_n^\la,-\xi^\la_n))_{n\in\N}$ and $((\ell_n^\ra,\xi^\ra_n))_{n\in\N}$ be 
the faces of $C^\la=C(X^\la)$ and $C^\ra=C(X^\ra)$, respectively, when sampled 
independently at random uniformly on lengths as described in 
Section~\ref{subec:Con_Minor_Splitt}. Then $\tau_T/T$ follows the law 
$\Beta(1-\rho,\rho)$, the random functions $C^\la$ and $C^\ra$ are conditionally 
independent given $\tau_T$ and, conditional on $\tau_T$, we have
\begin{equation}\label{eq:pre_post}
\begin{split}
\tau_T^{-1}\sum_{n=1}^\infty \ell_n^\la\delta_{\xi_n^\la}
&\sim\D_{(\tau_T,U(0,1)|_{[0,1-\rho]},G)},\\
(T-\tau_T)^{-1}\sum_{n=1}^\infty \ell_n^\ra\delta_{\xi_n^\ra}
&\sim\D_{(T-\tau_T,U(0,1)|_{[1-\rho,1]},G)}.
\end{split}
\end{equation}
\end{prop}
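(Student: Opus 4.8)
The plan is to derive Proposition~\ref{prop:pre_post} from the known description~\eqref{eq:levy-minorant} of the convex minorant of the unconditioned L\'evy process $X$ on $[0,T]$, by classifying the faces of $C=C(X)$ according to the sign of their heights. The key observation is that, under~(\nameref{asm:(P)}), the splitting of $X$ at its minimum corresponds \emph{exactly} to splitting the faces of $C$ into those with negative slope (the pre-minimum faces, reversed) and those with positive slope (the post-minimum faces): a face $[g_n,d_n]$ of $C$ contributes to the graph of $X^\la$ precisely when $\xi_n=C(d_n)-C(g_n)<0$, and to $X^\ra$ precisely when $\xi_n>0$ (heights are a.s.\ nonzero under~(\nameref{asm:(D)})). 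This is because the chronological order of faces equals the order by increasing slope, so all negative-slope faces precede the minimum and all positive-slope faces follow it; reflecting the negative-slope faces through the minimum (time-reversal and vertical flip, as in~\eqref{def:pre_post}) produces $C^\la$, with its faces now having positive slope, while the positive-slope faces directly assemble $C^\ra$. Consequently $\tau_T=\sum_{n:\xi_n<0}\ell_n$ and $T-\tau_T=\sum_{n:\xi_n>0}\ell_n$.

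Given this, the strategy is to start from the Sethuraman-type representation of $\D_{(T,U(0,1),G)}$ appearing implicitly in~\eqref{eq:levy-minorant}: the normalised lengths $(\ell_n/T)_{n\in\N}$ form a stick-breaking sequence based on $\Beta(1,1)$ (i.e.\ uniform sticks), the marks are $\xi_n=G(\ell_n,V_n)$ for an iid uniform sequence $(V_n)_{n\in\N}$ independent of everything, and $\operatorname{sign}(\xi_n)$ depends only on whether $V_n\le 1-\rho$, by the final remark before the statement. First I would condition on the sequence of signs $(\epsilon_n)_{n\in\N}$, where $\epsilon_n=\1\{V_n\le 1-\rho\}$; these are iid $\mathrm{Bernoulli}(1-\rho)$, independent of $X$ and of the lengths. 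Then I would invoke a thinning/partitioning lemma for stick-breaking and Dirichlet processes: when one splits the atoms of a Dirichlet process $\D_{(T,U(0,1),G)}$ according to an independent labelling with probabilities $(1-\rho,\rho)$, the total masses of the two groups are $\Beta(1-\rho,\rho)$-distributed (this gives $\tau_T/T\sim\Beta(1-\rho,\rho)$), the two groups are conditionally independent given the split, and each group, renormalised, is again a marked Dirichlet process — the negative group being $\D_{(\tau_T,\,U(0,1)|_{[0,1-\rho]},\,G)}$ and the positive group being $\D_{(T-\tau_T,\,U(0,1)|_{[1-\rho,1]},\,G)}$. The shift from parameter $T$ to $\tau_T$ (resp.\ $T-\tau_T$) in the kernel argument $\theta\pi_n$ is exactly what makes this work, since the renormalised sticks within each group are the size-biased-deleted sticks of the original, which is the defining self-similarity of stick-breaking.

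The main obstacle — and the step deserving the most care — is making the partitioning lemma for \emph{marked} Dirichlet processes precise, because the mark of the $n$-th atom, $\kappa(\theta\pi_n,x_n)=G(T\pi_n,V_n)$, depends on the stick length $\pi_n$ itself, not merely on an auxiliary variable. One has to verify that, after conditioning on the labels and restricting to (say) the negative group, the lengths of the surviving faces, \emph{renormalised by their total mass $\tau_T$}, again form a uniform stick-breaking sequence, and that the mark of a face of renormalised length $p$ is $G(\tau_T p,\cdot)$ evaluated at a uniform on $[0,1-\rho]$ — i.e.\ that the ``$\theta$'' seen by the kernel is genuinely $\tau_T$ and the base measure is the uniform restricted and renormalised to $[0,1-\rho]$. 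I expect this to follow from a direct stick-breaking computation: deleting each stick independently with probability $\rho$ and renormalising yields, by standard beta-algebra (aggregation of consecutive $\Beta(1,1)$ factors interrupted by deletions, using $\Beta$-thinning identities), again a $\Beta(1,1)$ stick-breaking sequence, while the joint law of (surviving stick, its mark) transforms correctly because $G$ scales its first argument linearly against the total length. I would also note the degenerate cases: if $\rho=1$ then $\tau_T=0$ a.s.\ (no negative faces, consistent with $\Beta(0,1)=\delta_0$) and $C^\ra=C$; symmetrically for $\rho=0$; the convention $\Beta(\theta,0)=\delta_1$, $\Beta(0,\theta)=\delta_0$ introduced before the statement handles these cleanly. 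Finally, conditional independence of $C^\la$ and $C^\ra$ given $\tau_T$ is inherited from the conditional independence of the two atom-groups of a Dirichlet process given the group masses, together with the independence of the mark-generating uniforms across atoms.
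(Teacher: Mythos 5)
Your proposal follows essentially the same route as the paper: identify the pre-/post-minimum split with the sign-split of the face heights, observe that under~(\nameref{asm:(P)}) the sign of $\xi_n$ is determined by whether $\upsilon_n=F(\ell_n,\xi_n)\le 1-\rho$, and decompose the Dirichlet process accordingly to obtain independence of $\beta=\tau_T/T$, $\Xi^\la$ and $\Xi^\ra$. The one point worth flagging concerns your ``main obstacle.'' Rather than partitioning the marked process and then having to verify by hand that the surviving renormalised sticks are again $\Beta(1,1)$-stick-breaking and that the kernel now sees $\theta=\tau_T$, the paper first passes to the \emph{unmarked} auxiliary Dirichlet process $\Xi=T^{-1}\sum_n\ell_n\delta_{\upsilon_n}\sim\D_{U(0,1)}$ (which together with $(\ell_n)$ encodes $C$ bijectively, since $\xi_n=G(\ell_n,\upsilon_n)$), applies Lemma~\ref{lem:Dir-decomp} together with the transfer theorem~\citep[Thm~5.10]{MR1876169} to write $\Xi=\beta\Xi^\la+(1-\beta)\Xi^\ra$ with $\beta$, $\Xi^\la$, $\Xi^\ra$ independent, and only then re-marks. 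Since the unnormalised lengths of the faces of $C^\la$ are $T\beta\pi^\la_n=\tau_T\pi^\la_n$ and $\xi^\la_n=G(\ell^\la_n,\upsilon^\la_n)$ with $\upsilon^\la_n$ uniform on $[0,1-\rho]$, the law $\D_{(\tau_T,U(0,1)|_{[0,1-\rho]},G)}$ drops out with no beta-algebra. Conditioning on the sign sequence, as you propose, is not wrong, but the stick-breaking thinning identity you would then need to prove is exactly the content of Lemma~\ref{lem:Dir-decomp} specialised to uniform sticks; the paper avoids re-deriving it. Your treatment of the degenerate cases $\rho\in\{0,1\}$ and of the conditional independence matches the paper.
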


\begin{rem}\label{rem:pre_post}
(i) The measure $U(0,1)|_{[0,1-\rho]}$ (resp. $U(0,1)|_{[1-\rho,1]}$) on the 
interval $[0,1]$ has a density $x\mapsto \1_{[0,1-\rho]}(x)$ (resp. $x\mapsto
\1_{[1-\rho,1]}(x)$).\footnote{Here and throughout $\1_A$ denotes the indicator 
function of a set $A$.} In the case $\rho=1$, $X$ is a subordinator 
by~\cite[Thm~24.11]{MR3185174}. Then $\tau_T=T$ and only the first equality in 
law in~\eqref{eq:pre_post} makes sense (since there is no pre-minimum process) 
and equals that in~\eqref{eq:levy-minorant}. The case $\rho=0$ is analogous.\\
(ii) Proposition~\ref{prop:pre_post} provides a simple proof of the generalized 
arcsine law: under~(\nameref{asm:(D)}) and~(\nameref{asm:(P)}), 
we have $\tau_T/T\sim \Beta(1-\rho,\rho)$ (see~\citep[Thm~VI.3.13]{MR1406564} 
for a classical proof of this result).\\
(iii) Proposition~\ref{prop:pre_post} implies that  the heights 
$(\xi_n^\la)_{n\in\N}$ (resp. $(\xi_n^\ra)_{n\in\N}$) of the faces of the convex 
minorant $C^\la$ (resp. $C^\ra$) are conditionally independent given 
$(\ell_n^\la)_{n\in\N}$ (resp. $(\ell_n^\ra)_{n\in\N}$). Moreover, $\xi_n^\la$ 
(resp. $\xi_n^\ra$) is distributed as $F(\ell_n^\la,\cdot)$ 
(resp. $F(\ell_n^\ra,\cdot)$) conditioned to the negative (resp. positive) 
half-line. Given~$\tau_T$, the sequence $(\ell_n^\la/\tau_T)_{n\in\N}$ 
(resp. $(\ell_n^\ra/(T-\tau_T))_{n\in\N}$) is a stick-breaking process based on 
$\Beta(1,1-\rho)$ (resp. $\Beta(1,\rho)$).\\
(iv) If $T$ is an exponential random variable with mean $\theta>0$ 
independent of $X$, the random times $\tau_T$ and $T-\tau_T$ are 
independent gamma random variables with common scale parameter 
$\theta$ and shape parameters $1-\rho$ and $\rho$, respectively. This is 
because, the distribution of $\tau_T/T$, conditional on any value of $T$, is 
$\Beta(1-\rho,\rho)$ (see Proposition~\ref{prop:pre_post}), making $\tau_T/T$ 
and $T$ independent. Furthermore, by~\citep[Cor.~2]{MR2978134}, the 
random measures $\sum_{n=1}^\infty\delta_{(\ell_n^\la,\xi_n^\la)}$ and 
$\sum_{n=1}^\infty\delta_{(\ell_n^\ra,\xi_n^\ra)}$ are independent Poisson 
point processes with intensities given by the restriction of the measure 
$e^{-t/\theta}t^{-1}dt\P(X_t\in dx)$ on $(t,x)\in[0,\infty)\times\R$ to the subsets
$[0,\infty)\times(-\infty,0)$ and $[0,\infty)\times[0,\infty)$, respectively.
\end{rem}

The proof of Proposition~\ref{prop:pre_post} relies on the following property 
of Dirichlet processes, which is a direct consequence of the definition and~\citep[Lem.~3.1]{MR1309433}.

\begin{lem}\label{lem:Dir-decomp}
Let $\mu_{1}$ and $\mu_{2}$ be two non-trivial finite measures on a measurable 
space $(\mathbb{X},\mathcal{X})$. Let $\Xi_i\sim\D_{\mu_i}$ for $i=1,2$ and 
$\beta\sim \Beta(\mu_1(\mathbb{X}),\mu_2(\mathbb{X}))$ be jointly independent, 
then \[\beta\Xi_1+(1-\beta)\Xi_2\sim\D_{\mu_1+\mu_2}.\]
\end{lem}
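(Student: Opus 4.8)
The plan is to prove Lemma~\ref{lem:Dir-decomp} by reducing the statement about random probability measures to a statement about finite-dimensional Dirichlet vectors, which is precisely where the cited aggregation property of the Dirichlet distribution (and~\citep[Lem.~3.1]{MR1309433}) applies. Recall that $\Xi\sim\D_\mu$ is characterised by the requirement that for every measurable partition $\{B_0,\dots,B_n\}$ of $\mathbb{X}$ with $\mu(B_i)>0$, the vector $(\Xi(B_0),\dots,\Xi(B_n))$ is Dirichlet with parameters $(\mu(B_0),\dots,\mu(B_n))$. So to show $\Theta:=\beta\Xi_1+(1-\beta)\Xi_2\sim\D_{\mu_1+\mu_2}$, it suffices to fix an arbitrary measurable partition $\{B_0,\dots,B_n\}$ of $\mathbb{X}$ (we may harmlessly assume $\mu_i(B_j)>0$ for all $i,j$, since null cells can be merged or handled by the limiting Beta conventions $\Beta(\theta,0)=\delta_1$, $\Beta(0,\theta)=\delta_0$ introduced in Section~\ref{subsec:MarkedDirProc}) and verify that $(\Theta(B_0),\dots,\Theta(B_n))$ is Dirichlet with parameters $((\mu_1+\mu_2)(B_0),\dots,(\mu_1+\mu_2)(B_n))$.

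First I would write out, for the fixed partition, the two independent Dirichlet vectors
\[
 P=(\Xi_1(B_0),\dots,\Xi_1(B_n))\sim\mathrm{Dir}(\mu_1(B_0),\dots,\mu_1(B_n)),\qquad
 Q=(\Xi_2(B_0),\dots,\Xi_2(B_n))\sim\mathrm{Dir}(\mu_2(B_0),\dots,\mu_2(B_n)),
\]
together with the independent $\beta\sim\Beta(\mu_1(\mathbb{X}),\mu_2(\mathbb{X}))$, and note that $\mu_i(\mathbb{X})=\sum_{j=0}^n\mu_i(B_j)$ since $\{B_j\}$ partitions $\mathbb{X}$. The claim then becomes the finite-dimensional identity: if $P,Q,\beta$ are independent as above, then $\beta P+(1-\beta)Q\sim\mathrm{Dir}(\mu_1(B_0)+\mu_2(B_0),\dots,\mu_1(B_n)+\mu_2(B_n))$. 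This is exactly the content referenced as~\citep[Lem.~3.1]{MR1309433} (the ``convex-combination'' or superposition property of Dirichlet vectors); I would either cite it directly or, for a self-contained argument, derive it from the gamma representation: represent $P_j=G_j/\sum_k G_k$ with $G_j\sim\mathrm{Gamma}(\mu_1(B_j))$ independent, $Q_j=H_j/\sum_k H_k$ with $H_j\sim\mathrm{Gamma}(\mu_2(B_j))$ independent, and $\beta=\sum_k G_k/(\sum_k G_k+\sum_k H_k)$ (which is independent of both normalised vectors and has the right Beta law), so that $\beta P_j+(1-\beta)Q_j=(G_j+H_j)/(\sum_k G_k+\sum_k H_k)$, and the numerators $G_j+H_j\sim\mathrm{Gamma}(\mu_1(B_j)+\mu_2(B_j))$ are independent.

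Having established the identity for one arbitrary partition, I would conclude: since the partition was arbitrary, $\Theta$ satisfies the defining finite-dimensional property of $\D_{\mu_1+\mu_2}$, and a Dirichlet process is determined by its finite-dimensional distributions (consistency of these marginals under refinement of partitions is automatic from the aggregation property of the Dirichlet distribution), hence $\Theta\sim\D_{\mu_1+\mu_2}$. One should also check measurability — that $\Theta$ is genuinely a random probability measure, i.e. $\omega\mapsto\Theta(B)(\omega)$ is measurable for each $B\in\mathcal{X}$ and $B\mapsto\Theta(B)$ is a.s. a probability measure — but this is immediate since $\Xi_1,\Xi_2$ are random probability measures and $\beta\in[0,1]$, so a convex combination is again one. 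The main (and essentially only) obstacle is the finite-dimensional superposition identity for Dirichlet vectors; everything else is the routine ``partition-by-partition'' bookkeeping that the definition of a Dirichlet process invites, so I would keep that part brief and lean on~\citep[Lem.~3.1]{MR1309433}.
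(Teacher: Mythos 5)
Your proposal is correct and follows the same route the paper itself indicates: the paper states the lemma is ``a direct consequence of the definition and~\citep[Lem.~3.1]{MR1309433}'' without spelling out details, and you supply precisely those details by reducing to the finite-dimensional Dirichlet vectors over an arbitrary partition and invoking (or, via the gamma representation, re-deriving) the superposition identity from~\citep[Lem.~3.1]{MR1309433}. The gamma-representation argument you sketch is a valid self-contained derivation of the key finite-dimensional step, so nothing is missing.
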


\begin{proof}[Proof of Proposition~\ref{prop:pre_post}]
Recall that $\ell_n=d_n-g_n$ (resp. $\xi_n=C(d_n)-C(g_n)$) denotes the length 
(resp. height) of the $n$-th face of the convex minorant $C$ of $X$ (see 
Section~\ref{subec:Con_Minor_Splitt} above for definition).
By~\eqref{eq:levy-minorant}, the random variables $\upsilon_n=F(\ell_n,\xi_n)$ 
form a $U(0,1)$ distributed iid sequence $(\upsilon_n)_{n\in\N}$ independent of 
the stick-breaking process $(\ell_n)_{n\in\N}$. Since the faces of $C$ are 
placed in a strict ascending order of slopes, 
by~\eqref{eq:Dirichlet}--\eqref{eq:levy-minorant} the convex minorant $C$ of a 
path of $X$ is in a one-to-one correspondence with a realisation of the marked 
Dirichlet process $T^{-1}\sum_{n=1}^\infty \ell_n\delta_{\xi_n}$ and thus with 
the Dirichlet process $\Xi=T^{-1}\sum_{n=1}^\infty \ell_n\delta_{\upsilon_n}
\sim\D_{U(0,1)}$.

Assume now that $\rho\in(0,1)$. Since $U(0,1)|_{[0,1-\rho]}+U(0,1)|_{[1-\rho,1]}
=U(0,1)$ as measures on the interval $[0,1]$, Lemma~\ref{lem:Dir-decomp} 
and~\citep[Thm~5.10]{MR1876169} imply that by possibly extending the probability 
space we may decompose $\Xi=\beta\Xi^\la+(1-\beta)\Xi^\ra$, where the random 
elements $\beta\sim \Beta(1-\rho,\rho)$, $\Xi^\la\sim\D_{U(0,1)|_{[0,1-\rho]}}$ 
and $\Xi^\ra\sim\D_{U(0,1)|_{[1-\rho,1]}}$ are independent (note that we can 
distinguish between values above and below $1-\rho$ a.s. since, with probability 
$1$, no variable $\upsilon_n$ is exactly equal $1-\rho$). 
Since $\rho\in(0,1)$, condition~(\nameref{asm:(D)})
and~\cite[Thm~24.10]{MR3185174} imply that $\P(0<X_t<\epsilon)>0$ for all 
$\epsilon>0$ and $t>0$. Then~(\nameref{asm:(P)}) implies the equivalence: 
$F(t,x)\leq1-\rho$ if and only if $x\leq 0$. 

The construction of $(\upsilon_n)_{n\in\N}$ ensures that the faces of $C$ with 
negative (resp. positive) heights correspond to the atoms of $\Xi^\la$ (resp. 
$\Xi^\ra$). Therefore the identification between the faces of $C$ with the 
Dirichlet process $\Xi$ described above implies that $\Xi^\la$ (resp. $\Xi^\ra$) 
is also in one-to-one correspondence with the faces of $C^\la$ (resp. $C^\ra$). 
In particular, since $\tau_T=\sum_{n\in\N}\ell_n\cdot\1_{\{\xi_n<0\}}$ equals 
the sum of all the lengths of the faces of $C$ with negative heights, this 
identification implies $\tau_T\sim T\beta$ and the generalised arcsine law 
$\tau_T/T\sim \Beta(1-\rho,\rho)$ follows from Lemma~\ref{lem:Dir-decomp} 
applied to the measures $U(0,1)|_{[0,1-\rho]}$ and $U(0,1)|_{[1-\rho,1]}$ on 
$[0,1]$. Moreover, the lengths of the faces of $C^\la$ correspond to the masses 
of the atoms of $\beta\Xi^\la$. The independence of $\beta$ and $\Xi^\la$ 
implies that the sequence of the masses of the atoms of $\beta\Xi^\la$ is 
precisely a stick-breaking process based on the distribution $\Beta(1,1-\rho)$ 
multiplied by $\beta$. Similarly, the random variables $F(\ell_n^\la,\xi^\la_n)$ 
can be identified with the atoms of $\Xi^\la$ and thus form an iid sequence of 
uniform random variables on the interval $[0,1-\rho]$. Hence, conditional on 
$\tau_T$, the law of $\tau_T^{-1}\sum_{n=1}^\infty\ell_n^\la\delta_{\xi_n^\la}$ 
is as stated in the proposition. An analogous argument yields the correspondence 
between the Dirichlet process $\Xi^\ra$ and the faces of $C^\ra$. The fact that 
the orderings correspond to size-biased samplings follows 
from~\cite[Sec.~3.2]{MR2245368}. 

It remains to consider the case $\rho\in\{0,1\}$. By~\cite[Thm~24.11]{MR3185174}, 
$X$ (resp. $-X$) is a subordinator if $\rho=1$ (resp. $\rho=0$) 
satisfying~(\nameref{asm:(D)}). Then, clearly, $\rho=1$, $\tau_T=0$, $C^\ra=C$ 
(resp. $\rho=0$, $\tau_T=T$, $C^\la=C$) and the proposition follows 
from~\eqref{eq:levy-minorant}.
\end{proof}

\subsection{L\'evy meanders and their convex minorants\label{subsec:Levy-meanders}}
If $0$ is regular for $(0,\infty)$, then it is possible to define the L\'evy 
meander $X^{\me,T}=(X^{\me,T}_t)_{t\in[0,T]}$ as the weak limit as 
$\varepsilon\da0$ of the law of $X$ conditioned on the event 
$\{\un{X}_T>-\varepsilon\}$ (see~\citep[Lem.~7]{MR2164035} 
and~\citep[Cor.~1]{MR2375597}). Condition~(\nameref{asm:(P)}) and Rogozin's 
criterion~\citep[Prop.~VI.3.11]{MR1406564} readily imply that 0 is regular for 
$(0,\infty)$ if $\rho>0$, in which case the respective L\'{e}vy meander is well 
defined. As discussed in Section~\ref{subsec:MarkedDirProc}, the case $\rho=0$ 
corresponds to the negative of a subordinator where the meander does not exist.

In this section we will use the following assumption, which implies the existence 
of a density of $X_t$ for every $t>0$ and hence also Assumption~(\nameref{asm:(D)}). 

\begin{assumption*}[K]\label{asm:(K)} 
$\int_\R\left|\mathbb{E}\left(e^{iuX_t}\right)\right|du<\infty$ for every $t>0$. 
\end{assumption*}

L\'evy meanders arise under certain path transformations of L\'evy 
processes~\citep[Sec.~VI.4]{MR1406564}. For instance, by~\citep[Thm~2]{MR3160578}, 
if~(\nameref{asm:(K)}) holds and $0$ is regular for both $(-\infty,0)$ and  
$(0,\infty)$, then the pre- and post-minimum processes $X^\la$ and $X^\ra$ are 
conditionally independent given $\tau_T$ and distributed as meanders of $-X$ and $X$ 
on the intervals $[0,\tau_T]$ and $[0,T-\tau_T]$, respectively, generalising the 
result for stable processes~\citep[Cor.~VIII.4.17]{MR1406564}. The next theorem 
constitutes the main result of this section. 

\begin{thm}\label{thm:meander-minorant}
Assume $X$ satisfies~(\nameref{asm:(P)}) with $\rho\in(0,1]$ 
and~(\nameref{asm:(K)}). Pick a finite time horizon $T>0$ and let $X^{\me,T}$ 
be the L\'evy meander and let $((\ell_n^\me,\xi_n^\me))_{n\in\N}$ be the lengths 
and heights of the faces of $C(X^{\me,T})$ chosen independently at random 
uniformly on lengths. The sequence $((\ell_n^\me,\xi_n^\me))_{n\in\N}$ encodes 
a marked Dirichlet process as follows:
\begin{equation}\label{eq:meander_minorant}
T^{-1}\sum_{n=1}^\infty \ell_n^\me\delta_{\xi_n^\me}
\sim\D_{(T,\rho U(1-\rho,1),G)}.
\end{equation}
\end{thm}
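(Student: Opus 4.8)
The plan is to deduce the meander result from Proposition~\ref{prop:pre_post} by a limiting argument, exactly mirroring how the L\'evy meander $X^{\me,T}$ itself is defined as the weak limit of $X$ conditioned on $\{\un X_T > -\varepsilon\}$ as $\varepsilon\da 0$. The starting point is the observation that, under Assumption~(\nameref{asm:(P)}) with $\rho\in(0,1]$ and Assumption~(\nameref{asm:(K)}), the post-minimum process $X^\ra$ is already distributed as the meander of $X$ on the (random) interval $[0,T-\tau_T]$; this is the content of~\citep[Thm~2]{MR3160578} invoked at the end of Subsection~\ref{subsec:Levy-meanders}. So the right-hand piece of Proposition~\ref{prop:pre_post} gives, \emph{conditionally on} $T-\tau_T=s$, that the faces of $C(X^{\me,s})$ encode a marked Dirichlet process $s^{-1}\sum_n \ell_n^\ra\delta_{\xi_n^\ra}\sim\D_{(s,\,U(0,1)|_{[1-\rho,1]},\,G)}$. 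The first step is therefore to record this conditional statement cleanly and to note that the measure $U(0,1)|_{[1-\rho,1]}$ has total mass $\rho$, and that $\rho\,U(1-\rho,1)$ appearing in~\eqref{eq:meander_minorant} is precisely a rescaled name for this same measure (a finite measure of total mass $\rho$, uniform on $[1-\rho,1]$); one should check the $\kappa$-argument in the marked Dirichlet construction matches, i.e. that $G$ is applied at the point $\kappa(\theta\pi_n,x_n)=G(s\pi_n,x_n)$ in both formulations.

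The second step is to remove the conditioning on the length of the interval: Proposition~\ref{prop:pre_post} tells us $(T-\tau_T)/T\sim\Beta(\rho,1-\rho)$, but what we actually want in~\eqref{eq:meander_minorant} is the meander on the \emph{deterministic} interval $[0,T]$, not on the random interval $[0,T-\tau_T]$. So I would use the scaling invariance built into the marked Dirichlet construction: the law $\D_{(s,U(0,1)|_{[1-\rho,1]},G)}$ is a probability measure on random probability measures on $\R$, and its dependence on $s$ enters only through the second coordinate of $\kappa(s\pi_n,x_n)=G(s\pi_n,x_n)$ — the stick-breaking weights $(\pi_n)$ themselves are based on $\Beta(1,\rho)$ regardless of $s$ (here I use that the total mass of $U(0,1)|_{[1-\rho,1]}$ is $\rho$, independent of $s$). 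Hence to pass from the random interval $[0,T-\tau_T]$ to the fixed interval $[0,T]$ it suffices to argue that $C(X^{\me,T})$ and $C(X^{\me,s})|_{s=T}$ have the same law, which follows either from the definition of the meander directly on $[0,T]$ via the $\varepsilon\da 0$ limit, or by the same weak-limit characterisation applied with horizon $T$. Concretely: define the meander with horizon $T$ as the $\varepsilon\da 0$ weak limit; under~(\nameref{asm:(K)}) the convex minorant map is a.s.\ continuous at the limiting path (no fixed-time atoms, Assumption~(\nameref{asm:(D)}) follows from~(\nameref{asm:(K)})), so the faces of $C$ under the conditioned law converge to the faces of $C(X^{\me,T})$; and under each conditioning $\{\un X_T>-\varepsilon\}$ one can read off the face structure from Proposition~\ref{prop:pre_post} applied to the shifted process, letting $\varepsilon\da 0$.

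The main obstacle is the continuity/convergence step: one must justify that weak convergence of the conditioned laws of $X$ (in the Skorokhod $J_1$ topology on $[0,T]$) to the meander transfers to weak convergence of the associated sequences $((\ell_n,\xi_n))_{n\in\N}$ of faces, equivalently of the marked Dirichlet processes. The delicate point is that the map $x(\cdot)\mapsto C(x)$ and then $C\mapsto((\ell_n,\xi_n))_n$ (with faces enumerated in the size-biased random order driven by the auxiliary uniforms $\mathcal V$) is not continuous everywhere, and small faces can be created or destroyed under perturbations; one needs that the limit law is concentrated on paths where $C$ is determined continuously, which is where~(\nameref{asm:(K)}) (density of $X_t$, hence of the meander marginals, ruling out linear pieces of the path and ties among slopes) does the work. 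An alternative that sidesteps most of this analytic overhead, and which I would pursue in parallel, is to avoid weak limits entirely: combine the post-minimum identification~\citep[Thm~2]{MR3160578} with the conditional law from Proposition~\ref{prop:pre_post} and then observe that, because the stick-breaking weights in $\D_{(s,U(0,1)|_{[1-\rho,1]},G)}$ do not depend on $s$ and because $X^{\me,T}$ can be realised as $X^{\me,s}$ evaluated at the a.s.\ positive value $s=T-\tau_T$ \emph{after conditioning that value to equal $T$} — or, more robustly, by using the exponential-horizon disintegration in Remark~\ref{rem:pre_post}(iv), which makes $\tau_T/T$ independent of $T$ and hence lets one integrate out $T$ freely — the law of the faces on $[0,T]$ is forced to be $\D_{(T,\rho U(1-\rho,1),G)}$. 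I expect the cleanest writeup uses the exponential-horizon trick from Remark~\ref{rem:pre_post}(iv) to decouple the interval length from the face statistics, then conditions back to deterministic $T$.
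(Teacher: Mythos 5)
Your alternative route, the one you favour at the end, is essentially the paper's proof—but it is cleaner than you make it, and the exponential-horizon disintegration is unnecessary. The key move, which you circle around without quite stating, is to \emph{enlarge the horizon}: fix $T'>T$, apply Proposition~\ref{prop:pre_post} to $X$ on $[0,T']$, and condition on $\tau_{T'}=T'-T$. For $\rho\in(0,1)$, Rogozin's criterion gives regularity of $0$ for both half-lines, so~\citep[Thm~2]{MR3160578} applies and says that, conditionally on $\tau_{T'}=T'-T$, the post-minimum process $X^\ra$ restricted to $[0,T]$ has the law of $X^{\me,T}$. Proposition~\ref{prop:pre_post} then hands you $T^{-1}\sum_n\ell_n^\me\delta_{\xi_n^\me}\sim\D_{(T,U(0,1)|_{[1-\rho,1]},G)}$ directly, and your observation that $\rho\,U(1-\rho,1)=U(0,1)|_{[1-\rho,1]}$ as finite measures closes the gap to~\eqref{eq:meander_minorant}. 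By contrast, conditioning "$T-\tau_T$ to equal $T$" \emph{within the same horizon}, as you wrote, is not well posed (it is the event $\{\tau_T=0\}$, of probability zero when $\rho<1$); passing to $T'>T$ is precisely what makes the conditioning meaningful and unconditional. You should also dispatch $\rho=1$ separately: then $X$ is a subordinator, hence equal to its own meander, and~\eqref{eq:meander_minorant} reduces to~\eqref{eq:levy-minorant}.

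The weak-limit route you spend most of your space on is not needed and, as written, has a genuine gap. Weak convergence of the conditioned path laws $\mL(X\mid\un{X}_T>-\varepsilon)$ to the meander does not by itself give convergence in law of the size-biased face sequence $((\ell_n,\xi_n))_{n\in\N}$: the map from (path, auxiliary uniforms $\mathcal V$) to faces is discontinuous at paths whose convex minorant has ties of slope or a segment of positive length lying on the path, and although~(\nameref{asm:(K)}) should rule such pathologies out at the limit, you would still have to produce the continuity-at-the-limit argument. More seriously, you would also need a version of Proposition~\ref{prop:pre_post} or of~\eqref{eq:levy-minorant} applicable to the conditioned laws at positive $\varepsilon$, which are not L\'evy-process laws; the conditioning argument above sidesteps all of this.
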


\begin{proof}
The case $\rho=1$ is trivial since $X$ is then a subordinator 
by~\cite[Thm~24.11]{MR3185174}, clearly equal to its meander, 
and~\eqref{eq:meander_minorant} is the same as~\eqref{eq:levy-minorant}. If 
$\rho\in(0,1)$, then $0$ is regular for both half lines by Rogozin's 
criterion~\citep[Prop.~VI.3.11]{MR1406564}. Fix $T'>T$ and consider the L\'evy process 
$X$ on $[0,T']$. Conditional on $\tau_{T'}=T'-T$, the 
post-minimum process $(X^\ra_t)_{t\in[0,T']}$ defined 
in~\eqref{def:pre_post} is killed at $\tau_{T'}=T'-T$ and the law of 
$(X^\ra_t)_{t\in[0,T]}$ prior to the killing time is the same as the law of the meander 
$X^{\me,T}$ on $[0,T]$ by~\citep[Thm~2]{MR3160578}. Hence, conditional 
on $\tau_{T'}=T'-T$, the law of the faces of the convex minorant $C(X^\ra)$ on 
$[0,T]$ agree with those of the convex minorant $C(X^{\me,T})$. Thus, the 
distributional characterisation of Proposition~\ref{prop:pre_post} also applies 
to $T^{-1}\sum_{n=1}^\infty \ell_n^\me\delta_{\xi_n^\me}$, concluding the proof.
\end{proof}

\begin{rem}\label{rem:meander_gamma}
(i) Condition~(\nameref{asm:(K)}) is slightly stronger than~(\nameref{asm:(D)}). In 
fact, it holds if there is a Brownian component or if the L\'evy measure has sufficient 
activity~\citep[Sec.~5]{MR628873} (see also Lemma~\ref{lem:Kallenberg_regularity} 
in Appendix~\ref{app:On_regularity} below). Hence Condition~(\nameref{asm:(K)}) 
is satisfied by most subordinated stable processes.\\
(ii) Although sufficient and simple, Condition~(\nameref{asm:(K)}) is not a 
necessary condition for~\citep[Thm~2]{MR3160578}. The minimal requirement is 
that the density $(t,x)\mapsto\frac{\partial}{\partial x}F(t,x)$ exists and is uniformly 
continuous for $t>0$ bounded away from 0.\\
(iii) Identity~\eqref{eq:levy-minorant} (in the form of~\citep[Thm~1]{MR2978134}), 
applied to concave majorants, was used in~\citep{LevySupSim} to obtain a 
geometrically convergent simulation algorithm of the triplet 
$(X_T,\ov{X}_T,\tau_T(-X))$, where $\tau_T(-X)$ is the location of the supremum 
$\ov{X}_T=\sup_{t\in[0,T]}X_t$. In the same manner, a geometrically convergent 
simulation of the marginal $X_T^{\me,T}$ can be constructed using the identity  in~\eqref{eq:meander_minorant}.\\
(iv) The proof of Theorem~\ref{thm:meander-minorant} and 
Remark~\ref{rem:pre_post}(iv) above imply that if $T$ is taken to be an 
independent gamma random variable with shape parameter $\rho$ and scale 
parameter $\theta>0$, then the random measure 
$\sum_{n=1}^\infty\delta_{(\ell_n^\me,\xi_n^\me)}$ is a Poisson point process on 
$(t,x)\in[0,\infty)\times[0,\infty)$ with intensity $e^{-t/\theta}t^{-1}dt\P(X_t\in dx)$. 
This description and~\cite[Thm~6]{MR2948693} imply~\cite[Thm~4]{MR2948693}, 
the description of the chronologically ordered faces of the convex minorant of a 
Brownian meander. However, as noted in~\cite{MR2948693}, a direct proof 
of~\cite[Thm~6]{MR2948693}, linking the chronological and Poisson point 
process descriptions of the convex minorant of a Brownian meander, appears to 
be out of reach.
\end{rem}

\section{$\varepsilon$-strong simulation algorithms for convex minorants\label{sec:algorithms}}
As mentioned in the introduction, $\varepsilon$SS algorithm is a simulation 
procedure with random running time, which constructs a random element that is 
$\varepsilon$-close in the  essential supremum norm to the random element of 
interest, where $\varepsilon>0$ is an \textit{a priori} specified tolerance level. 
Moreover, the simulation procedure can be continued retrospectively if, given the 
value of the simulated random element, the tolerance level $\varepsilon$ needs 
to be reduced. Thus an $\varepsilon$SS scheme provides a way to compute the 
random element of interest to arbitrary precision almost surely,  leading to a 
number of applications (including exact and unbiased algorithms for related 
random elements) discussed in Subsection~\ref{subsec:e-app} below.

We now give a precise definition of an $\varepsilon$SS algorithm. Consider a 
random element $\Lambda$ taking values in a metric space $(\mathbb{X},d)$. A 
simulation algorithm that for any $\varepsilon>0$ constructs in finitely many steps 
a random element $\Lambda^\varepsilon$ in $\mathbb{X}$ satisfying (I) and (II) 
below is termed an $\varepsilon$SS algorithm: (I) there exists a coupling 
$(\Lambda,\Lambda^\varepsilon)$ on a probability space $\Omega$ such that the 
essential supremum $\mathrm{ess~sup}
\{d(\Lambda(\omega),\Lambda^\varepsilon(\omega)):\omega\in\Omega\}$ is at 
most  $\varepsilon$; (II) for any $m\in\N$, decreasing sequence 
$\varepsilon_1>\dots>\varepsilon_m>0$, random elements 
$\Lambda^{\varepsilon_1},\ldots,\Lambda^{\varepsilon_{m}}$ (satisfying (I) for the 
respective $\varepsilon_1,\ldots,\varepsilon_m$) and $\varepsilon'\in
(0,\varepsilon_m)$, we can sample $\Lambda^{\varepsilon'}$, given 
$\Lambda^{\varepsilon_1},\ldots,\Lambda^{\varepsilon_{m}}$, which satisfies (I) for 
$\varepsilon'$. Condition~(II), known as the tolerance-enforcement property of 
$\varepsilon$SS, can be seen as a measurement of the realisation of the random 
element $\Lambda$ whose error may be reduced in exchange for additional 
computational effort.

Throughout this paper, the metric $d$ in the definition above 
is given by the supremum norm on either the space of continuous functions on a 
compact interval or on a finite dimensional Euclidean space.   
The remainder of this section is structured as follows. 
Section~\ref{subsec:e-SS_Levy} reduces the problems of constructing 
$\varepsilon$SS algorithms for the finite dimensional distributions of L\'evy meanders 
and the convex minorants of L\'evy processes, to constructing an $\varepsilon$SS 
algorithm of the convex minorants of  L\'evy meanders. In 
Subsection~\ref{subsec:algs-stable} we apply 
Theorem~\ref{thm:meander-minorant} of Section~\ref{sec:ConcaveMajorant} to 
construct an $\varepsilon$SS algorithm for the convex minorant of a L\'evy meander 
under certain technical conditions.
In Theorem~\ref{thm:CMSM} we state a stochastic perpetuity 
equation~\eqref{eq:SMPerEq}, established in Section~\ref{sec:proofs} using 
Theorem~\ref{thm:meander-minorant}, that implies these technical conditions in the 
case of stable meanders. Subsection~\ref{subsec:algs-stable} concludes with the 
statement of Theorem~\ref{thm:all_complexities} describing the computational 
complexity of the  $\varepsilon$SS algorithm constructed in 
Subsection~\ref{subsec:algs-stable}.

\subsection{$\varepsilon$SS of the convex minorants of L\'evy  processes\label{subsec:e-SS_Levy}}
In the present subsection we construct $\varepsilon$SS algorithms for the convex 
minorant $C(X)$ and for the finite dimensional distributions of $X^{\me,T}$. 
Both algorithms require the following assumption. 

\begin{assumption*}[S]\label{asm:(S)} 
There is an $\varepsilon$SS algorithm for $C((\pm X)^{\me,t})$ for any $t>0$.
\end{assumption*}

In the case of stable processes, an algorithm satisfying Assumption~(\nameref{asm:(S)}) 
is given in the next subsection. In this subsection we assume that~(\nameref{asm:(P)}) 
and~(\nameref{asm:(S)}) hold for the process $T_cX$ for some $c\in\R$, where $T_c$ 
denotes the \emph{linear tilting} functional $T_c: f\mapsto (t\mapsto f(t)+ct)$ for any real 
function $f$. We construct an $\varepsilon$SS algorithm for the convex minorant $C(X)$, 
and hence for $(X_T,\un{X}_T)$, as follows ($\mL(\cdot)$ denotes the law of the random 
element in its argument).

\begin{algorithm}
\caption{$\varepsilon$SS of the convex minorant $C(X)$ of a L\'evy process $X$, such 
that $T_{c}X$ satisfies Assumptions~(\nameref{asm:(P)}) and~(\nameref{asm:(S)}) for 
some $c\in\R$.}\label{alg:eps-CM}
\begin{algorithmic}[1]
	\Require{Time horizon $T>0$, accuracy $\varepsilon>0$ and $c\in\R$.}
	\State{Sample $\beta\sim B(1-\rho,\rho)$ and put $s\la T\beta$}
	\State{Sample $\varepsilon/2$-strongly $f^\la$ from $\mL(C((-T_{c}X)^{\me,s}))$}
	\Comment{Assumption~(\nameref{asm:(S)})}
	\State{Sample $\varepsilon/2$-strongly $f^\ra$ from $\mL(C(T_{c}X^{\me,T-s}))$}
	\Comment{Assumption~(\nameref{asm:(S)})}
	\State{\Return $f_\varepsilon:t\mapsto -ct + f^\la(s-\min\{t,s\})-
		f^\la(s)+f^\ra(\max\{t,s\}-s)$ for $t\in[0,T]$.}
\end{algorithmic}
\end{algorithm}
\begin{rem}\label{rem:eps-CM}
(i) Note that $(f_\varepsilon(T),\un{f_\varepsilon}(T))$ is an $\varepsilon$SS of 
$(X_T,\un{X}_T)$ as $f\mapsto \un{f}(T)$ is a Lipschitz functional 
on the space of \cadlag~funcitons with respect to the supremum norm. Although 
$\tau_{[0,T]}(f_\varepsilon)=\inf\{t\in[0,T]:f_\varepsilon(t)=\un{f_\varepsilon}(T)\}\to
\tau_{[0,T]}(X)$ as $\varepsilon\downarrow0$ by~\citep[Lem.~14.12]{MR1876169}, 
\emph{a priori} control on the error does not follow directly in general. 
In the case of weakly stable processes, we will construct in steps~2 and~3 of
Algorithm~\ref{alg:eps-CM}, piecewise linear convex functions that sandwich 
$C(X)$, which yield a control on the error of the approximation of $\tau_T$ 
by Proposition~\ref{prop:General_UpLo}(c).\\
(ii) The algorithm may be used to obtain an $\varepsilon$SS of $-C(-X)$, 
the concave majorant of $X$.
\end{rem}
Fix $0=t_0<t_1<\ldots<t_m\leq t_{m+1}=T$ and recall from 
Subsection~\ref{subsec:Levy-meanders} above that $X^{\me,T}$ follows the law of 
$(X)_{t\in[0,T]}$ conditional on $\{\un{X}_T\geq 0\}$. Note that $\P(\un{X}_T\geq0)=0$, 
but $\P(\un{X}_T\geq0|\un{X}_{t_1}\geq0)>0$ by Assumption~(\nameref{asm:(D)}). Thus, 
sampling $(X^{\me,T}_{t_1},\ldots,X^{\me,T}_{t_m})$ is reduced to jointly simulating 
$(X_{t_1},\ldots,X_{t_m})$ and $\un{X}_T$ conditional on $\{\un{X}_{t_1}\geq 0\}$ and 
rejecting all samples not in the event $\{\un{X}_T\geq0\}$. More precisely, we get the 
following algorithm.

\begin{algorithm}
\caption{$\varepsilon$-strong simulation of the vector 
$(X^{\me,T}_{t_1},\ldots,X^{\me,T}_{t_m})$.}\label{alg:fd_meander}
\begin{algorithmic}[1]
		\Require{Times $0=t_0<t_1<\ldots<t_m\leq t_{m+1}=T$ and accuracy $\varepsilon>0$.} 
		\Repeat
			\State{Put $(\Pi_0,\varepsilon_0,i)\la (\emptyset,2\varepsilon/(m+1),0)$}
			\Repeat{~Conditionally on the variables in the set $\Pi_i$}
				\State{Put $(\varepsilon_{i+1},i)\la (\varepsilon_i/2,i+1)$}
				\State{Sample $\varepsilon_i$-strongly $z_1^{\varepsilon_i}$ from $\mL(X_{t_1}^{\me,t_1})$ and put $(x_1^{\varepsilon_i},\un{x}_1^{\varepsilon_i})\la (z_1^{\varepsilon_i},z_1^{\varepsilon_i})$}
				\Comment{Assumption~(\nameref{asm:(S)})}
				\For{$k=2,\ldots,m+1$}
					\State{Sample ${\varepsilon_i}$-strongly $(z_k^{\varepsilon_i},\un{z}_k^{\varepsilon_i})$ from
			$\L(X_{t_k-t_{k-1}},\un{X}_{t_k-t_{k-1}})$}
					\Comment{Remark~\ref{rem:eps-CM}(i)}
					\State{Put $(x_k^{\varepsilon_i},\un{x}_k^{\varepsilon_i}) \la(x_{k-1}^{\varepsilon_i}+z_k^{\varepsilon_i},
					\min\{\un{x}_{k-1}^{\varepsilon_i},x_{k-1}^{\varepsilon_i}+
					\un{z}_k^{\varepsilon_i}\})$}
				\EndFor
				\State{Put $\Pi_{i}\la\Pi_{i-1}\cup\{(z_k^{\varepsilon_i},
					\un{z}_k^{\varepsilon_i})\}_{k=1}^{m+1}$}
				\Until{$\un{x}_{m+1}^{\varepsilon_i}-(m+1)\varepsilon_i\geq0$ or $\un{x}_{m+1}^{\varepsilon_i}+(m+1)\varepsilon_i<0$}
		\Until{$\un{x}_{m+1}^{\varepsilon_i}-(m+1)\varepsilon_i\geq0$}
		\State{\Return $(x_1^{\varepsilon_i},\ldots,x_m^{\varepsilon_i})$.}
	\end{algorithmic}
\end{algorithm}
\begin{rem}\label{rem:alg_1}
(i) All the simulated values are dropped when the condition in 
line~13 fails.\\
(ii) If the algorithm satisfying Assumption~(\nameref{asm:(S)}) is the result of a 
sequential procedure, 
one may remove the explicit reference to $\varepsilon_i$ in line~4 and instead run all 
pertinent algorithms for another step until condition in line~12 holds. This is, 
for instance, the case for the algorithms we present for stable meanders.\\
\end{rem}

\subsection{Simulation of the convex minorant of stable meanders\label{subsec:algs-stable}}
In the remainder of the paper, we let $Z=(Z_t)_{t\in[0,T]}$ be a stable process with 
stability parameter $\alpha\in(0,2]$ and positivity parameter $\P(Z_1>0)=\rho\in(0,1]$, 
using Zolotarev's (C) form (see e.g.~\cite[App.~A]{ExactSim}). 
It follows from~\cite[Eq.~(A.1)\&(A.2)]{ExactSim} that 
Assumptions~(\nameref{asm:(K)}) and~(\nameref{asm:(P)}) are satisfied by $Z$. In 
the present subsection, we will construct an $\varepsilon$SS algorithm for the 
convex minorant of stable meanders, required by 
Assumption~(\nameref{asm:(S)}) of Subsection~\ref{subsec:e-SS_Levy}.

The scaling property implies that $(Z^{\me,T}_{sT})_{s\in[0,1]}
\overset{d}{=}(T^{1/\alpha}Z^{\me,1}_s)_{s\in[0,1]}$ and thus 
\[(C(Z^{\me,T})(sT))_{s\in[0,1]}\overset{d}{=}
(C(T^{1/\alpha}Z^{\me,1})(s))_{s\in[0,1]}=
(T^{1/\alpha}C(Z^{\me,1})(s))_{s\in[0,1]}.\]
By the relation in display, it is sufficient to consider the case of the normalised 
stable meander $Z^\me=Z^{\me,1}$ in the remainder of the paper.


\subsubsection{Sandwiching\label{subsec:sandwich}}

To obtain an $\varepsilon$SS of the convex minorant of a meander, we will construct 
two convex and piecewise linear functions with finitely many faces that sandwich the 
convex minorant and whose distance from each other, in the supremum norm, is at 
most $\varepsilon$. Intuitively, the sandwiching procedure relies on two ingredients: 
(I) the ability to sample, for each $n$, the first $n$ faces in the minorant and (II) doing 
so jointly with a variable $c_n>0$ that dominates the sum of the heights of all the 
unsampled faces. Conditions (I) and (II) are, by Proposition~\ref{prop:Sandwich-CM} 
below, sufficient to sandwich the convex minorant: lower (resp. upper) bound 
$C(Z^\me)_n^\da$ (resp. $C(Z^\me)_n^\ua$) is constructed by adding a final face of 
height $0$ (resp. $c_n$) and length equal to the sum of the lengths of the remaining 
faces and sorting all $n+1$ faces in increasing order of slopes. The distance (in the 
supremum norm) between the convex functions $C(Z^\me)_n^\da$ and 
$C(Z^\me)_n^\ua$ equals $c_n$ (see  Proposition~\ref{prop:Sandwich-CM} for details). 
The $\varepsilon$SS algorithm is then obtained by stopping at $-N(\varepsilon)$, 
the smallest integer $n$ for which the error $c_n$ is smaller than $\varepsilon$ (see 
Algorithm~\ref{alg:eps-CM-SM} below).

In general, condition~(I) is relatively easy to satisfy under the assumptions of 
Theorem~\ref{thm:meander-minorant}. Condition~(II) however, is more challenging. 
In the stable case, we first establish a stochastic perpetuity in 
Theorem~\ref{thm:CMSM} and use ideas from~\citep{ExactSim} to sample the 
variables  $c_n$, $n\in\N$, in condition~(II) (see Equation~\eqref{eq:c_n}).

Figure~\ref{fig:eSS-CM}(a) below illustrates the output of the $\varepsilon$SS 
Algorithm~\ref{alg:eps-CM-SM} below for the convex minorant $C(Z^\me)$. By gluing 
two such outputs for the (unnormalised) stable meanders $Z^\la$ and $Z^\ra$, 
straddling the minimum of $Z$ over the interval $[0,1]$ as in~\eqref{def:pre_post}, 
with $n$ and $m$ faces, respectively, we obtain a convex function $C(Z)_{n,m}^\da$ 
(resp. $C(Z)_{n,m}^\ua$) that is smaller (resp. larger) than the convex minorant $C(Z)$ 
of the stable process (see details in Proposition~\ref{prop:General_UpLo}). 
Figure~\ref{fig:eSS-CM}(b) illustrates how these approximations sandwich the convex 
minorant $C(Z)$.

\begin{figure}[H]
	\centering{}\resizebox{.49\linewidth}{!}{
		\begin{subfigure}[l]{0.55\textwidth}
			\begin{tikzpicture} 
			\begin{axis} 
			[
			title={$C(Z^\me)_n^\da$ and $C(Z^\me)_n^\ua$ for $(\alpha,\rho)=(1.3,0.69)$},
			ymin=-.1,
			ymax=5.35,
			xmin=-.05,
			xmax=1.05,
			xlabel=$t$,
			width=9cm,
			height=4.5cm,
			axis on top=true,
			ytick=\empty,
			xtick=\empty,
			xticklabels=none,
			yticklabels=none,
			axis x line=middle, 
			axis y line=none,
			legend pos=north west ] 
			
			\addplot[
			dotted,
			color=violet,
			mark=square*,
			mark size=1pt
			]
			coordinates {
				(0,0)(.174,.120)(.497,1.20)(.954,4.64)(1,5.31)
			};
			\addplot[
			dotted,
			color=violet,
			mark=square*,
			mark size=1pt
			]
			coordinates {
				(0,0)(.457,0)(.630,.120)(.954,1.20)(1,1.86)
			};
			
			\addplot[
			dashdotted,
			color=orange,
			mark=square*,
			mark size=1pt
			]
			coordinates {
				(0,0)(.174,.120)(.325,.591)(.648,1.67)(.894,3.49)(.954,4.01)(1,4.68)
			};
			\addplot[
			dashdotted,
			color=orange,
			mark=square*,
			mark size=1pt
			]
			coordinates {
				(0,0)(.246,0)(.420,.120)(.571,.591)(.894,1.67)(.954,2.19)(1,2.86)
			};
			
			\addplot[
			dashed,
			color=olive,
			mark=square*,
			mark size=1pt
			]
			coordinates {
				(0,0)(.174,.120)(.408,.678)(.559,1.15)(.882,2.22)(.887,2.26)(.947,2.79)(.993,3.45)(.994,3.48)(1,4.39)
			};
			\addplot[
			dashed,
			color=olive,
			mark=square*,
			mark size=1pt
			]
			coordinates {
				(0,0)(.234,0)(.408,.120)(.559,.591)(.882,1.67)(.887,1.70)(.947,2.23)(.993,2.89)(.994,2.93)(1,3.83)
			};
			
			\addplot[
			solid,
			color=black,
			mark=square*,
			mark size=1pt
			]
			coordinates {
				(0,0)(.174,.120)(.318,.319)(.331,.338)(.337,.349)(.387,.484)(.389,.491)(.541,.963)(.864,2.04)(.871,2.07)(.882,2.14)(.887,2.18)(.947,2.70)(.993,3.37)(.994,3.40)(1,4.31)
			};
			\legend {{\small $n=4$}, ,{\small $n=6$}, ,
				{\small $n=9$}, ,{\small $n=17$}};
			\end{axis}
			\end{tikzpicture}\subcaption{Sandwiching $C(Z^\me)$}
	\end{subfigure}}
	\centering{}\resizebox{.49\linewidth}{!}{
		\begin{subfigure}[l]{0.55\textwidth}
			\begin{tikzpicture} 
			\begin{axis} 
			[
			title={$C(Z)_{n,m}^\da$ and $C(Z)_{n,m}^\ua$ for $(\alpha,\rho)=(1.8,0.52)$},
			ymin=-4.2,
			ymax=.9,
			xmin=-.05,
			xmax=1.05,
			xlabel=$t$,
			width=9cm,
			height=4.5cm,
			axis on top=true,
			ytick=\empty,
			xtick=\empty,
			xticklabels=none,
			yticklabels=none,
			axis x line=middle, 
			axis y line=none,
			legend style={at={(.55,.5)},anchor=south east}] 
			
			\addplot[
			dotted,
			color=violet,
			mark=square*,
			mark size=1pt
			]
			coordinates {
				(0.0,0.0)(0.041,-0.823)(0.042,-0.842)(0.063,-1.206)(0.15,-2.256)(0.179,-2.516)(0.192,-2.597)(0.23,-2.8)(0.261,-2.8)(0.36,-2.7)(0.592,-2.004)(0.632,-1.864)(0.878,-0.681)(0.911,-0.406)(0.937,-0.073)(0.948,0.076)(1.0,0.785)
			};
			\addplot[
			dotted,
			color=violet,
			mark=square*,
			mark size=1pt
			]
			coordinates {
				(0.0,-1.29)(0.041,-2.113)(0.042,-2.132)(0.063,-2.496)(0.15,-3.546)(0.179,-3.805)(0.192,-3.887)(0.23,-4.09)(0.261,-4.09)(0.313,-4.09)(0.411,-3.99)(0.644,-3.294)(0.684,-3.153)(0.93,-1.97)(0.962,-1.695)(0.989,-1.365)(1.0,-1.216)
			};
			
			\addplot[
			solid,
			color=black,
			mark=square*,
			mark size=1pt
			]
			coordinates {
				(0.0,0.0)(0.0,-0.001)(0.0,-0.005)(0.0,-0.006)(0.0,-0.006)(0.0,-0.007)(0.0,-0.009)(0.0,-0.012)(0.002,-0.112)(0.005,-0.221)(0.005,-0.224)(0.046,-1.047)(0.047,-1.066)(0.062,-1.366)(0.083,-1.73)(0.17,-2.78)(0.199,-3.039)(0.21,-3.117)(0.223,-3.199)(0.261,-3.402)(0.261,-3.402)(0.36,-3.302)(0.592,-2.606)(0.632,-2.465)(0.878,-1.282)(0.915,-1.023)(0.948,-0.748)(0.974,-0.418)(0.986,-0.269)(0.991,-0.177)(0.991,-0.172)(0.999,0.028)(0.999,0.098)(1.0,0.12)(1.0,0.128)(1.0,0.144)(1.0,0.158)(1.0,0.161)(1.0,0.164)(1.0,0.167)(1.0,0.168)
			};
			\legend {{\small $n=m=8$}, , {\small $n=m=20$}};
			\end{axis}
			\end{tikzpicture}\subcaption{Sandwiching $C(Z)$}
	\end{subfigure}}
	\caption{\footnotesize (A) Sandwiching of the convex 
	minorant $C(Z^\me)$ 
	using $n$ faces. The lower and upper bounds are numerically indistinguishable 
	for $n=17$. (B) Sandwiching of the convex minorant $C(Z)$ 
	using $n$ and $m$ faces of the convex minorants $C(Z^\la)$ and  
	$C(Z^\ra)$ of the meanders $Z^\la$ and $Z^\ra$, respectively. 
	Again the bounds are numerically indistinguishable for $n=m=20$.}\label{fig:eSS-CM}
\end{figure}
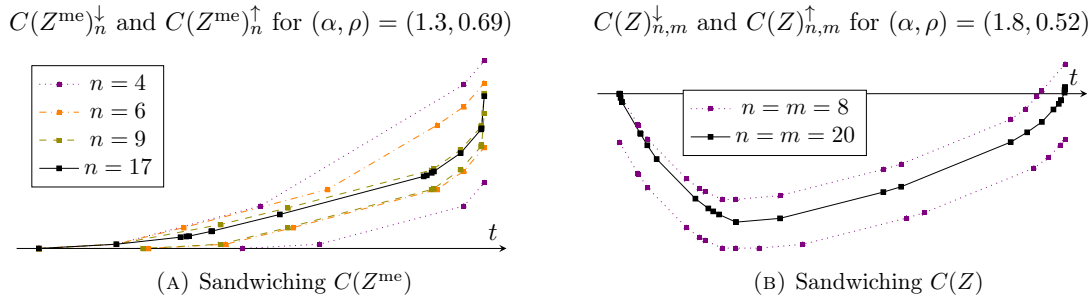

A linear tilting can be applied, as in Algorithm~\ref{alg:eps-CM} above, to obtain
a sandwich for the convex minorant of a weakly stable processes for all $\alpha
\in(0,2]\setminus\{1\}$ (see a numerical example in 
Subsection~\ref{subsec:num-hit} below).

\subsubsection{The construction of $c_n$}
Since stable processes satisfy Assumptions~(\nameref{asm:(P)}) 
and~(\nameref{asm:(K)}), we may use Theorem~\ref{thm:meander-minorant} and 
the scaling property of stable laws as stepping stones to obtain a Markovian 
description of the convex minorants of the corresponding meanders. Let 
$\mS(\alpha,\rho)$, $\mS^+(\alpha,\rho)$ and $\mS^\me(\alpha,\rho)$ be the laws 
of $Z_1$, $Z_1$ conditioned to be positive and $Z_1^\me$, respectively, where 
$(Z_t)_{t\in[0,1]}$ is a stable process with parameters $(\alpha,\rho)$. Recall the 
definition of the sets $\mZ^n=\{k\in\mathbb{Z}:k<n\}$ and 
$\mZ^n_m=\mZ^n\setminus\mZ^m$ for $n,m\in\mathbb{Z}$.

\begin{thm}\label{thm:CMSM}
Let $((\ell_n^\me,\xi_n^\me))_{n\in\N}$ be the faces of $C(Z^\me)$ chosen 
independently at random uniformly on lengths. Define the random variables 
$L_{n+1}=\sum_{m\in\mZ^{n+1}}\ell_{-m}^\me$, $U_n=\ell_{-n}^\me/L_{n+1}$,
\begin{equation}\label{eq:MC}
S_n =(\ell_{1-n}^{\me})^{-1/\alpha}\xi^\me_{1-n}\qquad\text{and}\qquad
M_{n+1}=L_{n+1}^{-1/\alpha}\sum_{m\in\mZ^{n+1}}\xi^\me_{-m},
\end{equation}
for all $n\in\mZ^0$. Then the following statements hold.\\
{\normalfont(a)} $((S_n,U_n))_{n\in\mZ^0}$ is an iid sequence with common law 
$\mS^+(\alpha,\rho)\times \Beta(1,\rho)$.\\
{\normalfont(b)} $(M_n)_{n\in\mZ^1}$ is a stationary Markov chain satisfying 
$M_0=Z^\me_1\sim\mS^\me(\alpha,\rho)$ and
\begin{equation}\label{eq:SMPerEq}
M_{n+1}=(1-U_n)^{1/\alpha}M_n+U_n^{1/\alpha}S_n,\qquad\text{for all }n\in\mZ^0.
\end{equation}
{\normalfont(c)} The law of $Z_1^\me$ is the unique solution to the perpetuity 
$Z_1^\me\overset{d}{=}(1-U)^{1/\alpha}Z_1^\me+U^{1/\alpha}S$ for independent 
$(S,U)\sim \mS^+(\alpha,\rho)\times \Beta(1,\rho)$.
\end{thm}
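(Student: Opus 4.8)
\textbf{Proof proposal for Theorem~\ref{thm:CMSM}.}

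The plan is to derive all three parts from Theorem~\ref{thm:meander-minorant}, which identifies $T^{-1}\sum_n \ell_n^\me\delta_{\xi_n^\me}$ (with $T=1$) as the marked Dirichlet process $\D_{(1,\rho U(1-\rho,1),G)}$, combined with the Sethuraman stick-breaking construction \eqref{eq:Dirichlet} and the scaling property of stable laws. First I would unpack what the marked-Dirichlet description says about the faces when chosen uniformly on lengths: the normalised lengths $(\ell_n^\me)_{n\in\N}$ form a stick-breaking sequence based on $\Beta(1,\rho)$ (since the total mass of $\rho U(1-\rho,1)$ is $\rho$), and, given the lengths, each $F(\ell_n^\me,\xi_n^\me)$ is an independent uniform on $[1-\rho,1]$ — equivalently, $\xi_n^\me$ is conditionally distributed as $Z_{\ell_n^\me}$ conditioned to be positive, which by the scaling property $Z_t\overset{d}{=}t^{1/\alpha}Z_1$ means $(\ell_n^\me)^{-1/\alpha}\xi_n^\me$ is an independent draw from $\mS^+(\alpha,\rho)$, independent of the lengths. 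The crucial structural fact I would invoke is the well-known reversal/exchangeability of the stick-breaking construction: reading the stick-breaking sequence ``from the end'' — i.e. indexing by $n\in\mZ^0=\{\ldots,-2,-1,0\}$ so that $\ell_{-n}^\me$ is the $(n{+}1)$-th most recently broken piece — the residual masses $L_{n+1}=\sum_{m\in\mZ^{n+1}}\ell_{-m}^\me$ satisfy $L_{n+1}/L_n = 1-U_n$ with $U_n=\ell_{-n}^\me/L_{n+1}\sim\Beta(1,\rho)$ i.i.d., and this reversed reading is exactly the ``size-biased in reverse'' device already used in \citep{ExactSim}.

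For part~(a), once the reversed indexing is set up, $(S_n,U_n)_{n\in\mZ^0}$ is i.i.d.\ with law $\mS^+(\alpha,\rho)\times\Beta(1,\rho)$: the $U_n$ part is the stick-breaking-from-the-end statement just described, the $S_n$ part is the scaled conditioned marginal described above, and independence across $n$ and between the two coordinates follows from the conditional-independence structure of the marked Dirichlet process (the marks are conditionally independent given the lengths, and the function $\kappa=G$ acts coordinatewise). For part~(b), I would write $M_{n+1}=L_{n+1}^{-1/\alpha}\sum_{m\in\mZ^{n+1}}\xi_{-m}^\me$ and split off the newest term: $\sum_{m\in\mZ^{n+1}}\xi_{-m}^\me = \sum_{m\in\mZ^{n}}\xi_{-m}^\me + \xi_{1-n}^\me$ (here the term with index $m=n$, i.e. the face $\ell_{1-n}^\me$), so that dividing by $L_{n+1}^{1/\alpha}$ and using $L_n = L_{n+1}(1-U_n)$ and $\ell_{1-n}^\me = L_{n+1}U_n$ yields \eqref{eq:SMPerEq}. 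Stationarity and the Markov property then follow because $M_n$ is a measurable function of $(S_m,U_m)_{m<n}$ while the increment from $M_n$ to $M_{n+1}$ uses only $(S_n,U_n)$, which is independent of the past by part~(a); that $M_0=Z_1^\me\sim\mS^\me(\alpha,\rho)$ is just the observation that $\sum_n\xi_n^\me$ reconstructs the endpoint $C(Z^\me)(1)=Z^\me_1$ and $L_1=1$ (the lengths sum to $1$), so $M_0 = \sum_n\xi_n^\me = Z_1^\me$. Part~(c) is then immediate: \eqref{eq:SMPerEq} with the stationarity from~(b) shows $\mS^\me(\alpha,\rho)$ solves the stated distributional fixed-point equation, and uniqueness follows from a standard contraction-type argument — iterating the recursion, $M_n = \sum_{k} \big(\prod_{j}(1-U_j)^{1/\alpha}\big)U_k^{1/\alpha}S_k$ converges a.s.\ and in an appropriate sense because the product factors $\prod_j(1-U_j)^{1/\alpha}$ decay geometrically in expectation (as $\E[(1-U)^{s/\alpha}]<1$ for small $s>0$ since $U>0$ a.s.), so the perpetuity has a unique law regardless of the (possibly heavy-tailed) $S$.

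The main obstacle I anticipate is making the reversed-indexing/size-biasing argument fully rigorous: one must be careful that ``choosing faces uniformly on lengths and then reading the induced sequence backwards'' genuinely produces the clean i.i.d.\ $\Beta(1,\rho)$ structure for $(U_n)$ and not merely a size-biased permutation thereof. This is precisely the interplay between size-biased sampling and stick-breaking that is handled in \citep{ExactSim} and in the GEM/Poisson–Dirichlet literature (e.g.\ via \cite[Sec.~3.2]{MR2245368} cited already in the proof of Proposition~\ref{prop:pre_post}), so I would lean on those results rather than re-derive the combinatorics; the remaining steps are then algebraic manipulations of the recursion and a routine perpetuity-uniqueness argument. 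A secondary subtlety is the heavy tails of $S$: one cannot use an $L^1$ or $L^2$ contraction directly, so the convergence and uniqueness in~(c) should be phrased via a.s.\ absolute convergence of the series together with uniqueness of fixed points among \emph{all} probability laws, using that the multiplicative weights are i.i.d.\ and strictly less than $1$ — a standard fact about perpetuities with contractive multipliers (cf.\ Vervaat-type results), which I would cite rather than prove.
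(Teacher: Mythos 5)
Your proposal takes essentially the same route as the paper: part~(a) is read off Theorem~\ref{thm:meander-minorant} together with the $\Beta(1,\rho)$ stick-breaking characterisation of the lengths and the stable scaling of the marks; part~(b) is the elementary rearrangement of the definitions in~\eqref{eq:MC} combined with the independence from~(a) to get the Markov property and stationarity; and part~(c) follows by citing a uniqueness theorem for perpetuities (the paper cites \cite[Thm~2.1.3]{MR3497380}, your Vervaat-type reference is equivalent in spirit). The decompositions, the intermediate identity $M_n=\sum_{m\in\mZ^n}\big(\prod_{k\in\mZ_{m+1}^n}(1-U_k)^{1/\alpha}\big)U_m^{1/\alpha}S_m$, and the heavy-tails caveat you raise all match the paper's treatment.

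Where you go astray is in the framing of part~(a). You promote a ``reversal/exchangeability of the stick-breaking construction'' to the status of the crucial structural fact, describe $\ell_{-n}^\me$ as ``the $(n{+}1)$-th most recently broken piece,'' call it a ``size-biased in reverse'' device, and flag as the main anticipated obstacle the worry that reading backwards might produce only a size-biased permutation rather than the clean i.i.d.\ $\Beta(1,\rho)$ structure. None of this is happening. By the paper's convention $\mZ^0=\{k\in\mathbb{Z}:k<0\}$ does not contain $0$, so for $n\in\mZ^0$ the map $n\mapsto -n$ just relabels $\ell_1^\me,\ell_2^\me,\dots$ by negative integers in the same order; no reordering or size-biased permutation of the stick-breaking sequence is performed. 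Once Theorem~\ref{thm:meander-minorant} identifies $(\ell_k^\me)_{k\in\N}$ as a stick-breaking process based on $\Beta(1,\rho)$, the variable $U_n=\ell_{-n}^\me/L_{n+1}$ is, by inspection of the stick-breaking recursion, literally the $(-n)$-th stick-breaking fraction $\beta_{-n}$, so i.i.d.\ $\Beta(1,\rho)$-ness is immediate and there is no subtlety to resolve. (The negative indexing exists only to set up the backward-in-time dominating process for Algorithm~\ref{alg:eps-CM-SM}; it is a simulation device, not a probabilistic re-indexing.) Relatedly there are a few slips: you wrote $L_{n+1}/L_n=1-U_n$ where it should be $L_n/L_{n+1}=1-U_n$, and in part~(b) you peel off the term $\xi_{1-n}^\me$ with face $\ell_{1-n}^\me$ when the new term in $\mZ^{n+1}\setminus\mZ^n=\{n\}$ is $\xi_{-n}^\me$ with face $\ell_{-n}^\me$. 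These do not derail the argument, but the ``reversal'' concern is a phantom obstacle: the actual proof of~(a) is the one-line observation that $U_n=\beta_{-n}$, which is shorter than what you anticipated.
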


Theorem~\ref{thm:CMSM}, proved in Subsection~\ref{proofThmCMSM} below, 
enables us to construct a process $(D_n)_{n\in\mZ^1}$ that dominates 
$(M_n)_{n\in\mZ^1}$: $D_n\geq M_n$ for $n\in\mZ^1$ and can be simulated jointly 
with the sequence $((S_n,U_n))_{n\in\mZ^0}$ (see details in 
Appendix~\ref{sec:notationSupSim}). Thus, by~\eqref{eq:MC}, we may construct the 
sandwiching convex functions in Algorithm~\ref{alg:eps-CM-SM} below (see also 
Subsection~\ref{subsec:sandwich} above for an intuitive description) by setting
\begin{equation}\label{eq:c_n}
c_{-n}=L_n^{1/\alpha}D_n\geq\sum_{m\in\mZ^n}\xi_{-m}^\me,\qquad n\in\mZ^0.
\end{equation}

\subsubsection{The algorithm and its running time\label{subsec:runtime-stable}}
Let $-N(\varepsilon)$ be the smallest $n\in\N$ such that $c_{-n}<\varepsilon$ 
(see~\eqref{eq:N_epsilon} below for the precise definition).  

\begin{algorithm}
\caption{$\varepsilon$-strong simulation of the convex minorant $C(Z^\me)$.}
\label{alg:eps-CM-SM}
	\begin{algorithmic}[1]
		\Require{Tolerance $\varepsilon>0$ and burn-in parameter $m\in\N\cup\{0\}$}
		\State{Sample independently $(S_k,U_k)$  for $k\in\mZ_{-m}^0$ from 
			the law  $\mS^+(\alpha,\rho)\times \Beta(1,\rho)$}
		\State{Sample backwards in time $(S_k,U_k,D_k)$ for $k\in \mZ_{-\max\{m+1,|N(\varepsilon)|\}}^{-m}$}
		\Comment{\citep[Alg.~2]{ExactSim}}
		\State{Set	$n=\max\{m+1,|N(\varepsilon)|\}$}	
		\State{For $k\in\{1,\ldots,n\}$, set $\ell^\me_k=U_{-k}\prod_{j\in\mZ_{1-k}^0}(1-U_j)$ 
			and $\xi_k^\me=(\ell^\me_k)^{1/\alpha}S_{1-k}$}
		\State{Put $c_n=(\prod_{k\in\mZ_{-n}^0}(1-U_k))^{1/\alpha}D_{-n}$ and 
			\Return $(C(Z^\me)_n^\da,C(Z^\me)_n^\ua)$ }
	\end{algorithmic}
\end{algorithm}
\begin{rem}\label{rem:eps-CM-SM}
(i) Given the faces $\{(\ell_k^\me,\xi_k^\me):k\in\mZ_1^{n+1}\}$, the output 
$(C(Z^\me)_n^\da,C(Z^\me)_n^\ua)$ of Algorithm~\ref{alg:eps-CM-SM} is defined in 
Proposition~\ref{prop:Sandwich-CM}, see also Lemma~\ref{lem:minimal-faces}. 
In particular, Proposition~\ref{prop:Sandwich-CM} requires to sort $(n+1)$ faces, sampled 
in  Algorithm~\ref{alg:eps-CM-SM}. This has a complexity of at most $\O(n\log(n))$ under 
the \emph{Timsort} algorithm, making the complexity of Algorithm~\ref{alg:eps-CM-SM} 
proportional to $|N(\varepsilon)|\log|N(\varepsilon)|$. Moreover, the burn-in parameter $m$ 
is conceptually inessential (i.e. we can take it to be equal to zero without affecting the law 
of the output) but practically very useful. Indeed, since Algorithm~\ref{alg:eps-CM-SM} 
terminates as soon as $c_n<\varepsilon$,  the inexpensive simulation of the pairs 
$(S_k,U_k)$ increases the probability of having to sample fewer (computationally expensive) 
triplets $(S_k,U_k,D_k)$ in line~2 (cf.~\citep[Sec.~5]{ExactSim}).\\
(ii) An alternative to Algorithm~\ref{alg:eps-CM-SM} is to run forward in time a 
Markov chain based on the perpetuity in Theorem~\ref{thm:CMSM}(c). This would 
converge in the $L^\gamma$-Wasserstein distance at the rate 
$\O((1+\gamma/(\alpha\rho))^{-n})$ (see~\citep[Sec.~2.2.5]{MR3497380}), 
yielding an approximate simulation algorithm for the law $S^\me(\alpha,\rho)$.
\end{rem}

Note that the running time of Algorithm~\ref{alg:eps-CM-SM} is completely determined by 
$\max\{m+1,|N(\varepsilon)|\}$. Applications in Section~\ref{subsec:e-app} below rely on 
the exact simulation of $\1_{\{Z_1^\me>x\}}$ for arbitrary $x>0$ via $\varepsilon$SS (see 
Subsection~\ref{subsec:indicators} below), which is based the on sequential refinements of 
Algorithm~\ref{alg:eps-CM-SM}. Moreover, Algorithms~\ref{alg:eps-CM} 
and~\ref{alg:fd_meander} rely on Algorithm~\ref{alg:eps-CM-SM}. Hence bounding the tails 
of $N(\varepsilon)$ is key in bounding the running times of all those algorithms. 
Proposition~\ref{prop:MainRuntime} establishes bounds on the tails of $N(\varepsilon)$, 
which combined with Lemma~\ref{lem:indicators_time} below, implies the following result, 
proved in Subsection~\ref{proofThmComplexities} below.

\begin{thm}\label{thm:all_complexities}
The running times of Algorithms~\ref{alg:eps-CM},~\ref{alg:fd_meander} 
and~\ref{alg:eps-CM-SM} for the $\varepsilon$SS of $C(T_cZ)$ (for any $c\in\R$), finite 
dimensional distributions of $Z^\me$ and $C(Z^\me)$, respectively, have exponential 
moments. The same holds for the exact simulation algorithm of 
$\1_{\{Z_1^\me>x\}}$ for any $x>0$.
\end{thm}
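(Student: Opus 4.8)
The plan is to reduce everything to a single tail estimate on $N(\varepsilon)$ and then show that this tail decays geometrically, which yields exponential moments for all the running times. First I would recall from Remark~\ref{rem:eps-CM-SM}(i) that the running time of Algorithm~\ref{alg:eps-CM-SM} is (up to the fixed burn-in $m$ and the logarithmic sorting overhead) governed by $|N(\varepsilon)|$, where $-N(\varepsilon)$ is the first $n$ with $c_{-n}=L_n^{1/\alpha}D_n<\varepsilon$; so it suffices to prove that $N(\varepsilon)$ has exponential moments, i.e. that $\E[e^{\gamma|N(\varepsilon)|}]<\infty$ for some $\gamma>0$. By the quoted Proposition~\ref{prop:MainRuntime}, one has a bound of the form $\P(|N(\varepsilon)|>n)\leq K\lambda^n$ for some $\lambda\in(0,1)$ and $K<\infty$ (uniformly in, or with explicit dependence on, $\varepsilon$); summing the geometric series $\sum_n e^{\gamma n}\P(|N(\varepsilon)|>n)$ for $\gamma<\log(1/\lambda)$ gives the first claim of the theorem for Algorithm~\ref{alg:eps-CM-SM}.

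Next I would propagate this to the other algorithms. For Algorithm~\ref{alg:eps-CM}, the running time is the sum of the running times of two calls to the meander $\varepsilon$SS routine (steps~2 and~3, at tolerance $\varepsilon/2$), applied to $\pm T_cZ$ on random time-horizons $s=T\beta$ and $T-s$; by the scaling reduction to $Z^\me=Z^{\me,1}$ discussed before Subsection~\ref{subsec:sandwich}, each such call has running time distributed as (a deterministic function of) $N(\varepsilon')$ for a suitable $\varepsilon'$, and a sum of two random variables each with exponential moments again has exponential moments. For the exact simulation of $\1_{\{Z_1^\me>x\}}$, I would invoke Lemma~\ref{lem:indicators_time}: the sequential-refinement scheme halts once the sandwiching bounds $C(Z^\me)_n^\da(1)$ and $C(Z^\me)_n^\ua(1)=C(Z^\me)_n^\da(1)+c_n$ both lie strictly on one side of $x$, and the number of refinements needed is controlled by $N(|x-Z_1^\me|)$ or a comparable quantity; combining the tail bound on $N(\cdot)$ with the integrability of $1/|x-Z_1^\me|$ (which follows from the boundedness of the density of $Z_1^\me$, itself a consequence of Assumption~(\nameref{asm:(K)}) and Theorem~\ref{thm:CMSM}) gives exponential moments here too. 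Finally, Algorithm~\ref{alg:fd_meander} has running time equal to a geometric-type number of attempts (each attempt has success probability $\P(\un{X}_T\geq0\mid\un{X}_{t_1}\geq0)>0$ bounded below) times the per-attempt cost, and the per-attempt cost is again a sum of $m+1$ independent quantities each distributed like the $\varepsilon$SS cost of $(Z_{t_k-t_{k-1}},\un{Z}_{t_k-t_{k-1}})$; a geometric sum of i.i.d. variables with exponential moments has exponential moments provided the success probability is large enough relative to the decay rate, and one can always reduce the per-step tolerance to make this hold, so the conclusion follows.

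The main obstacle is the geometric tail bound on $N(\varepsilon)$ itself, i.e. Proposition~\ref{prop:MainRuntime}, which is the genuinely hard input: one must show that the dominating process $(D_n)_{n\in\mZ^1}$ built from the perpetuity~\eqref{eq:SMPerEq} returns to a neighbourhood of $0$ with return times having exponential moments, despite the coefficients $(U_n,1-U_n)$ being dependent on the innovations $S_n$ and the $S_n$ having heavy ($\alpha$-stable) tails. Since the present statement is allowed to assume Proposition~\ref{prop:MainRuntime} (it is referenced as already established earlier), the only real work left in this proof is bookkeeping: tracking how the tolerance $\varepsilon$ is split and rescaled through the reductions (Algorithm~\ref{alg:eps-CM} passes $\varepsilon/2$; the indicator scheme works with a random effective tolerance $|x-Z_1^\me|$; Algorithm~\ref{alg:fd_meander} uses $\varepsilon_i=2\varepsilon/((m+1)2^i)$), and verifying in each case that a finite sum, or a geometric sum with controllably small ratio, of exponential-moment random variables still has an exponential moment. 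A secondary minor point is to confirm that the $\O(n\log n)$ sorting cost in Remark~\ref{rem:eps-CM-SM}(i) does not spoil the exponential-moment property: since $n\log n\leq n^2$ and $\E[e^{\gamma N^2}]$ need not be finite, one should phrase "exponential moments of the running time" so that it refers to the number of sampled triplets (the dominant and genuinely random cost), with the deterministic $\log$-overhead absorbed into the model of computation, exactly as the authors do.
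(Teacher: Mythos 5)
Your proposal follows essentially the same decomposition as the paper's proof: Proposition~\ref{prop:MainRuntime} controls the tail of $|N(\varepsilon)|$ and hence the runtime of Algorithm~\ref{alg:eps-CM-SM}; Algorithm~\ref{alg:eps-CM} is two calls to that routine; the indicator is handled by Lemma~\ref{lem:indicators_time}; and Algorithm~\ref{alg:fd_meander} is a geometric compound of per-attempt costs. Two specific points in your write-up are, however, incorrect as stated and should be fixed. First, the ``integrability of $1/|x-Z_1^\me|$'' is false: if the density $f_\me$ of $Z_1^\me$ is continuous and positive at $x$, then $\E\big[1/|x-Z_1^\me|\big]=\infty$. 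What Lemma~\ref{lem:indicators_time} actually requires is the first inequality in~\eqref{eq:indicators_decay_asm} with $r_1=1$, i.e.\ the Lipschitz estimate $\P\big(|Z_1^\me-x|<\varepsilon\big)\leq K_1\varepsilon$, which follows from local boundedness of $f_\me$ near $x$ (via~\cite[Lem.~8]{MR2599201}); combined with the second inequality (from Proposition~\ref{prop:MainRuntime}) it yields the sub-geometric bound~\eqref{eq:indicators_decay}, which is precisely the content of Proposition~\ref{prop:hit_time_tail}. Second, the geometric-compound step for Algorithm~\ref{alg:fd_meander} needs no side condition relating the acceptance probability $p$ to a decay rate, and shrinking the per-step tolerance would not help (it would only make the per-attempt cost $B'$ heavier-tailed): the correct observation is simply that $\lambda\mapsto\E[e^{\lambda B'}]$ is finite on a neighbourhood of $0$ with value $1$ at $0$, so by continuity there is $\lambda^\ast>0$ with $(1-p)\E[e^{\lambda B'}]<1$ for $\lambda\in(0,\lambda^\ast)$, and then $\E[e^{\lambda B''}]=\E\big[\E[e^{\lambda B'}]^R\big]<\infty$. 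Your remark on the sorting overhead is a fair one --- a constant multiple of $|N(\varepsilon)|\log|N(\varepsilon)|$ does not strictly inherit exponential moments from $|N(\varepsilon)|$ --- and the intended (and sensible) reading is indeed in terms of the number of sampled triplets, as you note.
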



\section{Applications and numerical examples\label{sec:e-app_and_num}}
In Subsection~\ref{subsec:e-app} we describe applications of $\varepsilon$SS 
paradigm to the exact and unbiased sampling of certain functionals. In 
Subsection~\ref{subsec:numerics} we present specific numerical examples. We apply 
algorithms from Section~\ref{sec:algorithms} to estimate expectations via MC 
methods and construct natural confidence intervals.

\subsection{Applications of $\varepsilon$SS algorithms\label{subsec:e-app}}
Consider a metric space $(\mathbb{X},d)$ and a random element 
$\Lambda$ taking values in $\mathbb{X}$. For every $\varepsilon>0$, the random 
element $\Lambda^\varepsilon$ is assumed to be the output of an $\varepsilon$SS
algorithm, i.e. it satisfies $d(\Lambda,\Lambda^\varepsilon)<\varepsilon$ a.s. (see Section~\ref{sec:algorithms} for definition). 

\subsubsection{Exact simulation of indicators\label{subsec:indicators}}
One may use $\varepsilon$SS algorithms to sample exactly indicators 
$\1_A(\Lambda)$ for any set $A\subset\mathbb{X}$ such that 
$\P(\Lambda\in\partial A)=0$, where $\partial A$ denotes the boundary of $A$ in 
$\mathbb{X}$. Since $d(\Lambda^\varepsilon,\partial A)>\varepsilon$ implies 
$\1_A(\Lambda)=\1_A(\Lambda^\varepsilon)$, it suffices to sample say the sequence 
$(\Lambda^{2^{-n}})_{n\in\N}$ until $d(\Lambda^{2^{-n}},\partial A)>2^{-n}$. Finite 
termination is ensured since $\{d(\Lambda,\partial A)>0\}=
\bigcup_{n\in\N}\{d(\Lambda,\partial A)>2^{-n}\}\subset
\bigcup_{n\in\N}\{d(\Lambda^{2^{-n-1}},\partial A)>2^{-n-1}\}$. 
In particular, line~12 in Algorithm~\ref{alg:fd_meander} is based on this principle.

\subsubsection{Unbiased simulation of finite variation transformations of a continuous functional\label{subsec:continuous}}
Let $f_1:\mathbb{X}\to\R$ be continuous, $f_2:\R\to[0,\infty)$ be of finite variation on 
compact intervals and define $f=f_2\circ f_1$. The functional 
$\un{Z}_T\cdot\1_{\{\un{Z}_T>b\}}$, for some $b<0$, is a concrete example defined on the 
space of continuous functions, since the maps $C(Z)\mapsto\un{Z}_T$ and 
$\un{Z}_T\mapsto\un{Z}_T\cdot\1_{\{\un{Z}_T>b\}}$ are continuous and of finite variation, 
respectively.

By linearity, it suffices to consider a monotone $f_2:\R\to[0,\infty)$.  Let $\varsigma$ be 
an independent random variable with positive density $g:[0,\infty)\to(0,\infty)$. Then 
$\Sigma=\1_{(\varsigma,\infty)}(f(\Lambda))/g(\varsigma)$ is simulatable and unbiased for 
$\E[f(\Lambda)]$. 
Indeed, it is easily seen that $\P(\Lambda\in\partial f^{-1}(\{\varsigma\}))=0$. 
Thus Subsection~\ref{subsec:indicators} shows that the indicator
$\1_{(\varsigma,\infty)}(f(\Lambda))$, and hence $\Sigma$, may be simulated exactly. 
Moreover, $\Sigma$ is unbiased since 
\[\E[\Sigma]=\E\left[\E[\Sigma|\Lambda]\right]
=\E\left[\int_0^\infty \1_{[0,f(\Lambda))}(s)g(s)^{-1}g(s)ds\right]
=\E[f(\Lambda)],\] 
and its variance equals $\E[\Sigma^2]-\E[f(\Lambda)]^2=
\E[G(f(\Lambda))]-\E[f(\Lambda)]^2$, where $G(r)=\int_0^t\frac{1}{g(s)}ds$. If we 
use the density $g:s\mapsto\delta(1+s)^{-1-\delta}$, for some $\delta>0$, then the 
variance of $\Sigma$ (resp. $f(\Lambda)$) is 
$\E\big[\frac{1}{\delta(2+\delta)}((1+f(\Lambda))^{2+\delta}-1)\big]-
\E[f(\Lambda)]^2$ (resp. $\E[f(\Lambda)^2]-\E[f(\Lambda)]^2$). Thus, $\Sigma$ can 
have finite variance if $f(\Lambda)$ has a finite $2+\delta$-moment. This application 
was proposed in~\citep{MR3619789} for the identity function $f_2(t)=t$ and any 
Lipschitz functional $f_1$.

\subsubsection{Unbiased simulation of a continuous finite variation function of the first passage time\label{subsec:hit-times}} 
Let $(\mathcal{X}_t)_{t\in[0,T)}$, $0<T\leq\infty$ be a real-valued \cadlag~process 
such that $\mathcal{X}_0=0$  and, for every $t>0$, there is an $\varepsilon$SS 
algorithm of $\ov{\mathcal{X}}_t=\sup_{s\in[0,t]\cap[0,T)}\mathcal{X}_s$. Fix any 
$x>0$ satisfying $\P(\ov{\mathcal{X}}_t=x)=0$ for almost every $t\in[0,T)$. Then 
$\sigma_x=\min\{T,\inf\{t\in(0,T):\mathcal{X}_t>x\}\}$ (using the convention 
$\inf\emptyset=\infty$) is the first passage time of level $x$ and satisfies the 
identity $\{t<\sigma_x\}=\{\ov{\mathcal{X}}_{\min\{t,T\}}\leq x\}$, for $t\geq 0$. 
By linearity, it suffices to consider a nondecreasing continuous function 
$f:[0,T)\to[0,\infty)$ with generalised inverse $f^*$. 

Let $\varsigma$ be as in Subsection~\ref{subsec:continuous} and 
$f(T)=\lim_{t\ua T}f(t)$. By~\cite[Prop.~4.2]{GenInverses}, $f^*$ is strictly increasing 
and $\{\varsigma<f(\sigma_x)\}=\{f^*(\varsigma)<\sigma_x\}
=\{f^*(\varsigma)<T,\ov{\mathcal{X}}_{f^*(\varsigma)}\leq x\}$, where 
$\P(\ov{\mathcal{X}}_{f^*(\varsigma)}= x,f^*(\varsigma)< T)=0$ by assumption. Hence 
$\Sigma=\1_{(\varsigma,\infty)}(f(\sigma_x))/g(\varsigma)$ is simulated by sampling 
$\varsigma$ and then, as in Subsection~\ref{subsec:indicators}, setting
$\Sigma=\1_{[0,x]}(\mathcal{X}_{f^*(\varsigma)})/g(\varsigma)$ if $f^*(\varsigma)<T$ 
and otherwise putting $\Sigma=0$. Moreover, by Subsection~\ref{subsec:continuous}, 
$\Sigma$ is unbiased for $\E[f(\sigma_x)]$. We stress that, unlike the functionals 
considered in Subsections~\ref{subsec:indicators} and~\ref{subsec:continuous} 
above, it is not immediately clear how to estimate $\E[f(\sigma_x)]$ using a simulation 
algorithm for $\ov{\mathcal{X}}_t$, $t\in[0,T)$. In Subsection~\ref{subsec:num-hit} 
we present a concrete example for weakly stable processes.

We end with the following remark. Consider the time the process $\mathcal{X}$ 
down-crosses (resp. up-crosses) a convex (resp. concave) function mapping 
$[0,\infty)$ to $\R$ started below  (resp. above) $\mathcal{X}_0=0$. If one has an 
$\varepsilon$SS algorithm for the convex minorant (resp. concave majorant) of 
$\mathcal{X}$, then a simple modification of the argument in the previous paragraph 
yields an unbiased simulation algorithm of any finite variation continuous 
function of such a first passage time.

\subsection{Numerical results\label{subsec:numerics}}
In this subsection we explore three applications of the $\varepsilon$SS of stable 
meanders and their convex minorants. Since Algorithm~\ref{alg:eps-CM-SM} 
uses~\citep[Alg.~2]{ExactSim} for backward simulation, we specify the values of 
the parameters $(d,\delta,\gamma,\kappa,m,m^\ast)=\varpi(\alpha,\rho)$
appearing in~\citep[Sec.~4]{ExactSim} and $m$ in Algorithm~\ref{alg:eps-CM-SM}
as follows: $(d^*,r)=(\frac{2}{3\alpha\rho},\frac{19}{20})$ and
\[\varpi(\alpha,\rho)=\bigg(d^*,\frac{d^*}{2},r\alpha,
4+\max\bigg\{\frac{\log(2)}{3\eta(d^*)},\frac{1}{\alpha\rho}\bigg\},
\bigg\lceil\frac{|\log(\varepsilon/2)|}{\log\big(\frac{\Gamma(1+\rho+1/\alpha)}
{\Gamma(1+\rho)\Gamma(1+1/\alpha)}\big)}\bigg\rceil,
12+\Big\lfloor\frac{3\rho}{r}\log\E S^{r\alpha}\Big\rfloor^+\bigg),\] 
where $\eta(d)=-\alpha\rho-\mathcal{W}_{-1}(-\alpha\rho de^{-\alpha\rho d})/d$ is the 
unique positive root of the equation $dt=\log(1+t/(\alpha\rho))$ (here, $\mathcal{W}_{-1}$ 
is the secondary branch of the Lambert W function~\cite{MR1414285}) and $S$ follows 
the law $\mS^+(\alpha,\rho)$. As usual, $\lfloor x\rfloor=\sup\{n\in\mathbb{Z}:n\leq x\}$ 
and $\lceil x \rceil=\inf\{n\in\mathbb{Z}:n\geq x\}$ denote the floor and ceiling 
functions and $x^+=\max\{0,x\}$ for any $x\in\R$.
This choice of $m$ satisfies $\E[U^{1/\alpha}]^m\approx\varepsilon/2$ for 
$\varepsilon<1$, where $U\sim \Beta(1,\rho)$. We fix $\varepsilon=2^{-32}$ 
throughout unless adaptive precision is required 
(see Subsections~\ref{subsec:indicators}--\ref{subsec:hit-times}). 

Figure~\ref{fig:Ne} graphs the empirical distribution function for the running time 
of Algorithm~\ref{alg:eps-CM-SM}, suggesting the existence of exponential moments of 
$|N(\varepsilon)|$, cf. Proposition~\ref{prop:MainRuntime} below. 

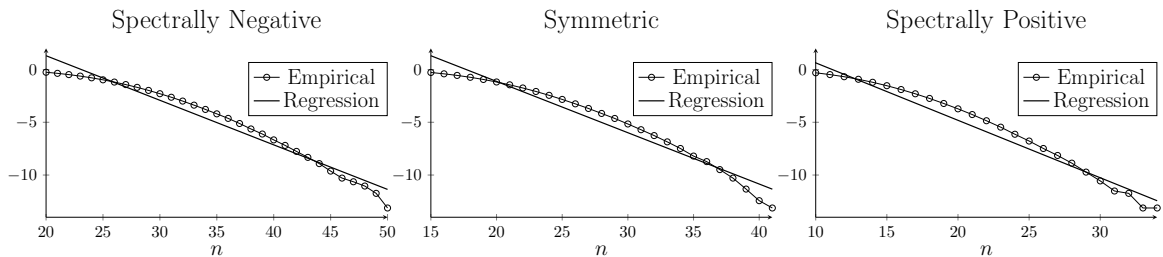
\begin{figure}[H]
	\centering{}\resizebox{.32\linewidth}{!}{
		\begin{subfigure}[h]{.55\linewidth}
			\begin{tikzpicture} 
			\begin{axis} 
			[
			xmin=20,
			xmax=50,
			ymin=-14,
			ymax=2,
			title={\LARGE Spectrally Negative},
			xlabel={\Large $n$},
			width=9.5cm,
			height=5.5cm,
			axis on top=true,
			axis x line=bottom,
			axis y line=middle,
			legend style = {at={(1,.6)}, anchor=south east}
			]
			
			\addplot[
			color=black,
			mark=o,
			]
			coordinates {
				(20.0,-0.260578)(21.0,-0.362811)(22.0,-0.482582)(23.0,-0.624558)(24.0,-0.788549)(25.0,-0.977416)(26.0,-1.19028)(27.0,-1.42788)(28.0,-1.69076)(29.0,-1.97994)(30.0,-2.29277)(31.0,-2.62804)(32.0,-2.99146)(33.0,-3.3777)(34.0,-3.78231)(35.0,-4.20171)(36.0,-4.63254)(37.0,-5.117)(38.0,-5.63407)(39.0,-6.11386)(40.0,-6.66874)(41.0,-7.19277)(42.0,-7.76107)(43.0,-8.32657)(44.0,-8.91767)(45.0,-9.62586)(46.0,-10.2892)(47.0,-10.6375)(48.0,-11.0429)(49.0,-11.7361)(50.0,-13.1224)
			};
			
			\addplot[
			solid,
			color=black,
			thick,
			]
			coordinates {
				(20.0,1.3053)(50.0,-11.3622)
			};
			
			\legend {\Large Empirical, \Large Regression};
			\end{axis}
			\end{tikzpicture}
	\end{subfigure}}
	\centering{}\resizebox{.32\linewidth}{!}{
		\begin{subfigure}[h]{.55\linewidth}
			\begin{tikzpicture} 
			\begin{axis} 
			[
			xmin=15,
			xmax=41,
			ymin=-14,
			ymax=2,
			title={\LARGE Symmetric},
			xlabel={\Large $n$},
			width=9.5cm,
			height=5.5cm,
			axis on top=true,
			axis x line=bottom,
			axis y line=middle,
			legend style = {at={(1,.6)}, anchor=south east}
			]
			
			\addplot[
			color=black,
			mark=o,
			]
			coordinates {
				(15.0,-0.281878)(16.0,-0.409395)(17.0,-0.559071)(18.0,-0.736878)(19.0,-0.94378)(20.0,-1.18219)(21.0,-1.45071)(22.0,-1.75006)(23.0,-2.0809)(24.0,-2.4462)(25.0,-2.83419)(26.0,-3.25424)(27.0,-3.69152)(28.0,-4.16344)(29.0,-4.63831)(30.0,-5.16134)(31.0,-5.70538)(32.0,-6.26065)(33.0,-6.85706)(34.0,-7.48401)(35.0,-8.20238)(36.0,-8.72791)(37.0,-9.48478)(38.0,-10.2892)(39.0,-11.3306)(40.0,-12.4292)(41.0,-13.1224)
			};
			
			\addplot[
			solid,
			color=black,
			thick,
			]
			coordinates {
				(15.0,1.3047)(41.0,-11.3391)
			};
			
			\legend {\Large Empirical, \Large Regression};
			\end{axis}
			\end{tikzpicture}
	\end{subfigure}}
	\centering{}\resizebox{.32\linewidth}{!}{
		\begin{subfigure}[h]{.55\linewidth}
			\begin{tikzpicture}
			\begin{axis} 
			[
			xmin=10,
			xmax=34,
			ymin=-14,
			ymax=2,
			title={\LARGE Spectrally Positive},
			xlabel={\Large $n$},
			width=9.5cm,
			height=5.5cm,
			axis on top=true,
			axis x line=bottom,
			axis y line=middle,
			legend style = {at={(1,.6)}, anchor=south east}
			]
			
			\addplot[
			color=black,
			mark=o,
			]
			coordinates {
				(10.0,-0.311439)(11.0,-0.480782)(12.0,-0.683312)(13.0,-0.926104)(14.0,-1.20998)(15.0,-1.53454)(16.0,-1.89856)(17.0,-2.30137)(18.0,-2.74087)(19.0,-3.21948)(20.0,-3.72712)(21.0,-4.26813)(22.0,-4.85517)(23.0,-5.46319)(24.0,-6.13487)(25.0,-6.77848)(26.0,-7.48401)(27.0,-8.14563)(28.0,-8.87387)(29.0,-9.72117)(30.0,-10.5574)(31.0,-11.5129)(32.0,-11.7361)(33.0,-13.1224)(34.0,-13.1224)
			};
			
			\addplot[
			solid,
			color=black,
			thick,
			]
			coordinates {
				(9.0,1.175)(34.0,-12.4378)
			};
			
			\legend {\Large Empirical, \Large Regression};
			\end{axis}
			\end{tikzpicture}
	\end{subfigure}}
	\caption{\footnotesize
	The graphs show the estimated value of $n\mapsto\log\P(|N(\varepsilon)|>n)$ 
	in the spectrally negative, symmetric and positive cases for $\alpha=1.5$, 
	$\varepsilon=2^{-32}$ and based on $N=5\times10^5$ samples. The curvature 
	in all three graphs suggests that $|N(\varepsilon)|$ has exponential moments of 
	all orders, a stronger claim than those of Theorem~\ref{thm:all_complexities} 
	(see also Proposition~\ref{prop:MainRuntime}).}\label{fig:Ne}
\end{figure}

To demonstrate practical feasibility, we first study the running time of 
Algorithm~\ref{alg:eps-CM-SM}. We implemented Algorithm~\ref{alg:eps-CM-SM}
in the Julia 1.0 programming language (see~\cite{Jorge_GitHub2}) and ran it on 
macOS Mojave 10.14.3 (18D109) with a 4.2 GHz Intel\textregistered 
Core\texttrademark i7 processor and an 8 GB 2400 MHz DDR4 memory. Under these 
conditions, generating $N=10^4$ samples takes approximately 1.30 seconds for any 
$\alpha>1$ and all permissible $\rho$ as long as $\rho$ is bounded away from $0$. 
This task much less time  for $\alpha<1$ so long as $\alpha$ and $\rho$ are bounded 
away from $0$. The performance worsens dramatically as either $\alpha\to0$ or 
$\rho\to0$. 

This behaviour is as expected since the coefficient in front of $M_n$ 
in~\eqref{eq:SMPerEq} of Theorem~\ref{thm:CMSM} follows the law 
$\Beta(1,\alpha\rho)$ with mean $\frac{1}{1+\alpha\rho}$, which tends to $1$ as 
$\alpha\rho\to0$. Hence, the Markov chain decreases very slowly when 
$\alpha\rho$ is close to $0$. From a geometric viewpoint,  note that as $\rho\to0$, 
the mean length of each sampled face (as a proportion of the lengths of the remaining 
faces) satisfies $\E\ell_1^\me=\frac{\rho}{1+\rho}\to0$, implying that large faces are 
increasingly rare. Moreover, as the stability index $\alpha$ decreases, the tails of the 
density of the L\'evy measure become very heavy, making a face of small length 
and huge height likely. To illustrate this numerically, the approximate time (in seconds)
taken by Algorithm~\ref{alg:eps-CM-SM} to produce $N=10^4$ samples for certain 
combinations of parameters is found in the following table: 

\begin{center}\begin{tabular}{|c|c|c|c|c|c|c|}\hline
		$\alpha\setminus\rho$ & 0.95 & 0.5 & 0.1 & 0.05 & 0.01 & 0.005 \\ \hline
		0.5 & 0.301 & 0.314 & 0.690 & 1.165 & 4.904 & 9.724 \\ \hline
		0.1 & 0.197 & 0.242 & 0.738 & 1.367 & 6.257 & 12.148 \\ \hline
		0.05 & 0.229 & 0.318 & 1.125 & 2.137 & 9.864 & 20.131 \\ \hline
\end{tabular}\end{center}

The remainder of the subsection is as follows. In Subsection~\ref{subsec:num-marginal} 
we estimate the mean of $Z^\me_1$ as a function of the stability parameter $\alpha$ in 
the spectrally negative, symmetric and positive cases. The results are compared with the 
exact mean, computed in Corollary~\ref{cor:moments} via the perpetuity in 
Theorem~\ref{thm:CMSM}(c). In Subsection~\ref{subsec:num-hit} we numerically 
analyse the first passage times of weakly stable processes. In 
Subsection~\ref{subsec:num-excursion} we estimate the mean 
of the normalised stable excursion at time $1/2$ and construct confidence intervals.

\subsubsection{Marginal of the normalised stable meander $Z^\me_1$\label{subsec:num-marginal}}
Let $\{(\zeta_i^{\varepsilon,\da},\zeta_i^{\varepsilon,\ua})\}_{i\in\N}$ be an iid sequence 
of $\varepsilon$-strong samples of $Z_1^\me$. Put differently, for all $i\in\N$, we have 
$0<\zeta_i^{\varepsilon,\ua}-\zeta_i^{\varepsilon,\da}<\varepsilon$ and the 
corresponding sample of $Z_1^\me$ lies in the interval 
$(\zeta_i^{\varepsilon,\da},\zeta_i^{\varepsilon,\ua})$. For any continuous 
function $f:\R_+\to\R_+$ with $\E[|f(Z_1^\me)|]<\infty$, a Monte Carlo estimate of 
$\E[f(Z_1^\me)]$ is given by $\frac{1}{2N}\sum_{i=1}^N (f(\zeta_i^{\varepsilon,\da})+
f(\zeta_i^{\varepsilon,\ua}))$. If $f$ is nondecreasing we clearly have the inequalities 
$\E[f(\zeta_1^{\varepsilon,\da})]\leq \E[f(Z_1^\me)]\leq \E[f(\zeta_1^{\varepsilon,\ua})]$.
Thus, a confidence interval $(a,b)$ for $\E[f(Z_1^\me)]$ may be constructed as follows: 
$a$ (resp. $b$) is given by the lower (resp. upper) end of the confidence interval (CI) for 
$\E[f(\zeta_1^{\varepsilon,\da})]$ (resp. $\E[f(\zeta_1^{\varepsilon,\ua})]$). 
We now use Algorithm~\ref{alg:eps-CM-SM} to estimate $\E[Z_1^\me]$ (for $\alpha>1$) 
and $\E[(Z_1^\me)^{-\alpha\rho}]$ and compare the estimates with the formulae for 
the expectations from Corollary~\ref{cor:moments}. The results are shown in 
Figure~\ref{fig:ecdf} below.

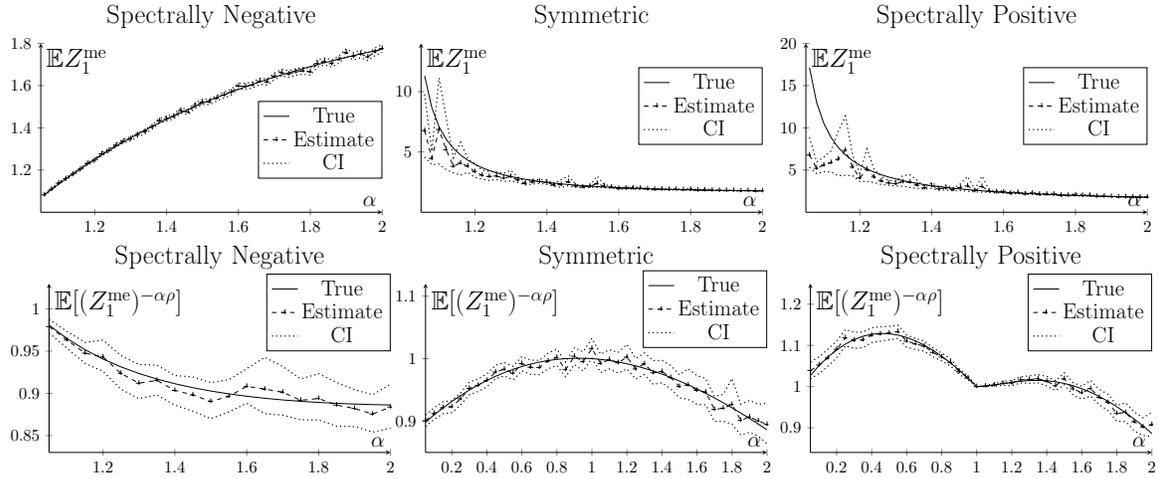
\begin{figure}[H]
	\centering{}\resizebox{.32\linewidth}{!}{
		\begin{subfigure}[h]{.55\linewidth}
			\begin{tikzpicture} 
			\begin{axis} 
			[
			xmin=1.05,
			xmax=2,
			ymin=1,
			ymax=1.8,
			title={\LARGE Spectrally Negative},
			xlabel={\Large $\alpha$},
			ylabel={\LARGE $\E Z^\me_1$},
			width=9.5cm,
			height=5.5cm,
			axis on top=true,
			axis x line=middle,
			axis y line=middle,
			legend style = {at={(1,.2)}, anchor=south east}
			]
			
			\addplot[
			solid,
			color=black,
			]
			coordinates {
				(1.06,1.08222)(1.08,1.10824)(1.1,1.13357)(1.12,1.15824)(1.14,1.18224)(1.16,1.20559)(1.18,1.2283)(1.2,1.25039)(1.22,1.27187)(1.24,1.29276)(1.26,1.31305)(1.28,1.33279)(1.3,1.35196)(1.32,1.3706)(1.34,1.38872)(1.36,1.40632)(1.38,1.42342)(1.4,1.44005)(1.42,1.4562)(1.44,1.4719)(1.46,1.48716)(1.48,1.50199)(1.5,1.5164)(1.52,1.53041)(1.54,1.54403)(1.56,1.55727)(1.58,1.57014)(1.6,1.58265)(1.62,1.59482)(1.64,1.60665)(1.66,1.61815)(1.68,1.62934)(1.7,1.64022)(1.72,1.65081)(1.74,1.66111)(1.76,1.67113)(1.78,1.68088)(1.8,1.69036)(1.82,1.6996)(1.84,1.70858)(1.86,1.71733)(1.88,1.72585)(1.9,1.73414)(1.92,1.74221)(1.94,1.75007)(1.96,1.75773)(1.98,1.76519)(2.0,1.77245)
			};
			
			\addplot[
			dashed,
			mark=+,
			color=black,
			]
			coordinates {
				(1.06,1.0823)(1.08,1.10869)(1.1,1.13765)(1.12,1.15843)(1.14,1.18037)(1.16,1.203)(1.18,1.23078)(1.2,1.24604)(1.22,1.27644)(1.24,1.29302)(1.26,1.31756)(1.28,1.33648)(1.3,1.34794)(1.32,1.36733)(1.34,1.3814)(1.36,1.40391)(1.38,1.43355)(1.4,1.44271)(1.42,1.45389)(1.44,1.4814)(1.46,1.47524)(1.48,1.50554)(1.5,1.52274)(1.52,1.52287)(1.54,1.54062)(1.56,1.55482)(1.58,1.56912)(1.6,1.59788)(1.62,1.59669)(1.64,1.60307)(1.66,1.62055)(1.68,1.61728)(1.7,1.64336)(1.72,1.6621)(1.74,1.65472)(1.76,1.66499)(1.78,1.67005)(1.8,1.66554)(1.82,1.70523)(1.84,1.71433)(1.86,1.70446)(1.88,1.72418)(1.9,1.7568)(1.92,1.73882)(1.94,1.74992)(1.96,1.73625)(1.98,1.76042)(2.0,1.77616)
			};
			
			\addplot[
			dotted,
			color=black,
			thick,
			]
			coordinates {
				(1.06,1.08584)(1.08,1.11279)(1.1,1.14238)(1.12,1.16362)(1.14,1.18618)(1.16,1.2093)(1.18,1.23739)(1.2,1.25321)(1.22,1.28414)(1.24,1.30122)(1.26,1.32592)(1.28,1.34547)(1.3,1.35692)(1.32,1.37698)(1.34,1.39146)(1.36,1.41407)(1.38,1.44421)(1.4,1.45374)(1.42,1.46555)(1.44,1.49301)(1.46,1.48735)(1.48,1.51782)(1.5,1.53559)(1.52,1.53594)(1.54,1.55388)(1.56,1.56847)(1.58,1.58285)(1.6,1.6116)(1.62,1.61066)(1.64,1.61768)(1.66,1.63501)(1.68,1.63203)(1.7,1.6587)(1.72,1.67769)(1.74,1.67025)(1.76,1.68049)(1.78,1.68592)(1.8,1.68196)(1.82,1.72222)(1.84,1.73074)(1.86,1.72155)(1.88,1.74122)(1.9,1.77427)(1.92,1.75583)(1.94,1.76749)(1.96,1.754)(1.98,1.77862)(2.0,1.79423)
			};
			
			\addplot[
			dotted,
			color=black,
			thick,
			]
			coordinates {
				(1.06,1.07867)(1.08,1.10447)(1.1,1.13282)(1.12,1.15311)(1.14,1.17445)(1.16,1.19664)(1.18,1.22411)(1.2,1.23867)(1.22,1.26879)(1.24,1.28498)(1.26,1.30908)(1.28,1.32787)(1.3,1.33887)(1.32,1.35778)(1.34,1.37138)(1.36,1.39365)(1.38,1.42308)(1.4,1.43176)(1.42,1.44259)(1.44,1.46945)(1.46,1.46302)(1.48,1.49322)(1.5,1.51025)(1.52,1.50986)(1.54,1.52765)(1.56,1.54122)(1.58,1.55559)(1.6,1.5844)(1.62,1.58296)(1.64,1.58838)(1.66,1.60589)(1.68,1.60231)(1.7,1.6282)(1.72,1.64688)(1.74,1.6392)(1.76,1.64912)(1.78,1.65387)(1.8,1.64928)(1.82,1.68819)(1.84,1.69789)(1.86,1.68781)(1.88,1.70712)(1.9,1.73981)(1.92,1.7216)(1.94,1.73214)(1.96,1.71869)(1.98,1.74194)(2.0,1.7581)
			};
			
			\legend {\Large True,\Large Estimate,\Large CI};
			\end{axis}
			\end{tikzpicture}
	\end{subfigure}}
	\centering{}\resizebox{.32\linewidth}{!}{
		\begin{subfigure}[h]{.55\linewidth}
			\begin{tikzpicture}
			\begin{axis} 
			[
			title = {\LARGE $\delta\times \alpha\rho$},
			ymin = 0,
			ymax = 14,
			xmin=1.05,
			xmax=2,
			title={\LARGE Symmetric},
			xlabel={\Large $\alpha$},
			ylabel={\LARGE $\E Z^\me_1$},
			width=9.5cm,
			height=5.5cm,
			axis on top=true,
			axis x line=middle,
			axis y line=middle,
			legend style = {at={(1,.4)}, anchor=south east}
			] 
			
			\addplot[
			solid,
			color=black,
			]
			coordinates {
				(1.06,11.3126)(1.08,8.67313)(1.1,7.09429)(1.12,6.0456)(1.14,5.29971)(1.16,4.74299)(1.18,4.31229)(1.2,3.96975)(1.22,3.69128)(1.24,3.46082)(1.26,3.26725)(1.28,3.10264)(1.3,2.96118)(1.32,2.8385)(1.34,2.73127)(1.36,2.63691)(1.38,2.55336)(1.4,2.479)(1.42,2.4125)(1.44,2.35278)(1.46,2.29896)(1.48,2.25027)(1.5,2.20611)(1.52,2.16595)(1.54,2.12933)(1.56,2.09587)(1.58,2.06525)(1.6,2.03717)(1.62,2.01139)(1.64,1.98768)(1.66,1.96586)(1.68,1.94575)(1.7,1.92721)(1.72,1.91011)(1.74,1.89432)(1.76,1.87975)(1.78,1.86629)(1.8,1.85386)(1.82,1.84238)(1.84,1.83179)(1.86,1.82203)(1.88,1.81302)(1.9,1.80474)(1.92,1.79711)(1.94,1.79011)(1.96,1.78369)(1.98,1.77782)(2.0,1.77245)
			};
			
			\addplot[
			dashed,
			mark=+,
			color=black,
			]
			coordinates {
				(1.06,6.77211)(1.08,4.46232)(1.1,6.86083)(1.12,5.18721)(1.14,3.81027)(1.16,4.075)(1.18,3.81276)(1.2,3.36024)(1.22,3.06909)(1.24,2.96538)(1.26,3.00241)(1.28,2.82873)(1.3,2.95819)(1.32,2.67398)(1.34,2.38402)(1.36,2.50749)(1.38,2.54152)(1.4,2.48123)(1.42,2.26011)(1.44,2.28457)(1.46,2.53557)(1.48,2.17159)(1.5,2.09536)(1.52,2.15048)(1.54,2.38678)(1.56,2.1513)(1.58,2.0799)(1.6,1.9765)(1.62,1.99461)(1.64,1.99629)(1.66,1.98508)(1.68,1.93358)(1.7,1.897)(1.72,1.91283)(1.74,1.88084)(1.76,1.89554)(1.78,1.86612)(1.8,1.89591)(1.82,1.82628)(1.84,1.84393)(1.86,1.82792)(1.88,1.80853)(1.9,1.79881)(1.92,1.80732)(1.94,1.77743)(1.96,1.79976)(1.98,1.76787)(2.0,1.77411)
			};
			
			\addplot[
			dotted,
			color=black,
			thick,
			]
			coordinates {
				(1.06,9.7775)(1.08,5.05407)(1.1,11.1405)(1.12,7.83051)(1.14,4.51264)(1.16,5.80994)(1.18,4.43978)(1.2,4.03325)(1.22,3.48365)(1.24,3.38591)(1.26,3.33963)(1.28,3.22783)(1.3,3.48297)(1.32,2.91315)(1.34,2.52974)(1.36,2.73538)(1.38,2.70812)(1.4,2.683)(1.42,2.36306)(1.44,2.70627)(1.46,3.00428)(1.48,2.29334)(1.5,2.18022)(1.52,2.2466)(1.54,2.99649)(1.56,2.28404)(1.58,2.16795)(1.6,2.04956)(1.62,2.06726)(1.64,2.09526)(1.66,2.05467)(1.68,1.99438)(1.7,1.94217)(1.72,1.96747)(1.74,1.91744)(1.76,1.93777)(1.78,1.90147)(1.8,1.97894)(1.82,1.86023)(1.84,1.91972)(1.86,1.86308)(1.88,1.83266)(1.9,1.82661)(1.92,1.83092)(1.94,1.79727)(1.96,1.81955)(1.98,1.7867)(2.0,1.79198)
			};
			
			\addplot[
			dotted,
			color=black,
			thick,
			]
			coordinates {
				(1.06,4.57551)(1.08,3.94061)(1.1,3.98822)(1.12,3.53569)(1.14,3.267)(1.16,3.08315)(1.18,3.30487)(1.2,2.8887)(1.22,2.74049)(1.24,2.62876)(1.26,2.72176)(1.28,2.54444)(1.3,2.6042)(1.32,2.46954)(1.34,2.25899)(1.36,2.33198)(1.38,2.39735)(1.4,2.30806)(1.42,2.16492)(1.44,2.04719)(1.46,2.20891)(1.48,2.07)(1.5,2.01999)(1.52,2.06715)(1.54,2.04635)(1.56,2.04263)(1.58,2.00319)(1.6,1.91218)(1.62,1.932)(1.64,1.92238)(1.66,1.92248)(1.68,1.88014)(1.7,1.85564)(1.72,1.86554)(1.74,1.84556)(1.76,1.85631)(1.78,1.83345)(1.8,1.8376)(1.82,1.79457)(1.84,1.79346)(1.86,1.79705)(1.88,1.78481)(1.9,1.77335)(1.92,1.78388)(1.94,1.75771)(1.96,1.78024)(1.98,1.749)(2.0,1.75618)
			};
			
			\legend {\Large True,\Large Estimate,\Large CI};
			\end{axis}
			\end{tikzpicture}
	\end{subfigure}}
	\centering{}\resizebox{.32\linewidth}{!}{
		\begin{subfigure}[h]{.55\linewidth}
			\begin{tikzpicture} 
			\begin{axis} 
			[
			ymax=20,
			ymin=0,
			xmin=1.05,
			xmax=2,
			title={\LARGE Spectrally Positive},
			xlabel={\Large $\alpha$},
			ylabel={\LARGE $\E Z^\me_1$},
			width=9.5cm,
			height=5.5cm,
			axis on top=true,
			axis x line=middle,
			axis y line=middle,
			legend style = {at={(1,.4)}, anchor=south east}
			]
			
			\addplot[
			solid,
			color=black,
			]
			coordinates {
				(1.06,17.1427)(1.08,12.9914)(1.1,10.5059)(1.12,8.85276)(1.14,7.67504)(1.16,6.79421)(1.18,6.11112)(1.2,5.56632)(1.22,5.12196)(1.24,4.75285)(1.26,4.44154)(1.28,4.17558)(1.3,3.94584)(1.32,3.74549)(1.34,3.56929)(1.36,3.41319)(1.38,3.27399)(1.4,3.14912)(1.42,3.03651)(1.44,2.93447)(1.46,2.84161)(1.48,2.75675)(1.5,2.67894)(1.52,2.60734)(1.54,2.54124)(1.56,2.48006)(1.58,2.42328)(1.6,2.37044)(1.62,2.32115)(1.64,2.27509)(1.66,2.23194)(1.68,2.19145)(1.7,2.15338)(1.72,2.11753)(1.74,2.08371)(1.76,2.05176)(1.78,2.02153)(1.8,1.99289)(1.82,1.96572)(1.84,1.93992)(1.86,1.91537)(1.88,1.89201)(1.9,1.86974)(1.92,1.84849)(1.94,1.8282)(1.96,1.80879)(1.98,1.79023)(2.0,1.77245)
			};
			
			\addplot[
			dashed,
			mark=+,
			color=black,
			]
			coordinates {
				(1.06,6.7808)(1.08,5.20343)(1.1,5.58314)(1.12,5.87811)(1.14,6.27225)(1.16,7.34663)(1.18,5.34677)(1.2,4.0667)(1.22,5.08428)(1.24,4.16718)(1.26,3.71117)(1.28,3.47493)(1.3,3.38038)(1.32,3.59848)(1.34,3.56655)(1.36,3.20868)(1.38,2.88315)(1.4,3.14808)(1.42,2.77631)(1.44,2.88301)(1.46,2.67094)(1.48,2.92676)(1.5,3.11612)(1.52,2.54938)(1.54,3.05206)(1.56,2.50298)(1.58,2.40196)(1.6,2.40493)(1.62,2.25484)(1.64,2.33454)(1.66,2.16078)(1.68,2.24644)(1.7,2.12202)(1.72,2.13095)(1.74,2.12811)(1.76,1.99033)(1.78,1.99613)(1.8,2.04781)(1.82,1.98989)(1.84,2.001)(1.86,1.87881)(1.88,1.89373)(1.9,1.85398)(1.92,1.82306)(1.94,1.81899)(1.96,1.80799)(1.98,1.79574)(2.0,1.78877)
			};
			
			\addplot[
			dotted,
			color=black,
			thick,
			]
			coordinates {
				(1.06,8.85099)(1.08,5.96382)(1.1,6.54031)(1.12,7.26366)(1.14,9.59744)(1.16,11.4023)(1.18,6.76412)(1.2,4.61793)(1.22,7.50741)(1.24,5.06786)(1.26,4.24582)(1.28,3.78025)(1.3,3.69176)(1.32,4.34692)(1.34,4.42671)(1.36,3.76507)(1.38,3.02784)(1.4,3.52273)(1.42,2.96173)(1.44,3.22734)(1.46,2.81123)(1.48,3.36737)(1.5,4.23569)(1.52,2.7146)(1.54,4.27688)(1.56,2.66498)(1.58,2.53437)(1.6,2.52556)(1.62,2.37068)(1.64,2.46706)(1.66,2.24052)(1.68,2.36591)(1.7,2.23872)(1.72,2.19892)(1.74,2.24528)(1.76,2.04591)(1.78,2.04203)(1.8,2.2027)(1.82,2.10075)(1.84,2.11008)(1.86,1.91193)(1.88,1.929)(1.9,1.8897)(1.92,1.84855)(1.94,1.84135)(1.96,1.8307)(1.98,1.81569)(2.0,1.80718)
			};
			
			\addplot[
			dotted,
			color=black,
			thick,
			]
			coordinates {
				(1.06,5.32209)(1.08,4.56637)(1.1,4.80757)(1.12,4.76509)(1.14,4.32535)(1.16,4.37249)(1.18,4.32263)(1.2,3.63784)(1.22,3.66231)(1.24,3.52126)(1.26,3.27731)(1.28,3.20716)(1.3,3.10562)(1.32,3.09196)(1.34,2.92693)(1.36,2.82226)(1.38,2.74782)(1.4,2.84186)(1.42,2.61008)(1.44,2.62987)(1.46,2.54223)(1.48,2.58757)(1.5,2.48322)(1.52,2.4122)(1.54,2.38009)(1.56,2.37258)(1.58,2.29169)(1.6,2.29997)(1.62,2.15534)(1.64,2.22335)(1.66,2.09013)(1.68,2.14414)(1.7,2.0287)(1.72,2.06669)(1.74,2.04411)(1.76,1.942)(1.78,1.95363)(1.8,1.94828)(1.82,1.91456)(1.84,1.92404)(1.86,1.84689)(1.88,1.85937)(1.9,1.82118)(1.92,1.79827)(1.94,1.79692)(1.96,1.78541)(1.98,1.77609)(2.0,1.77)
			};
			
			\legend {\Large True,\Large Estimate,\Large CI};
			\end{axis}
			\end{tikzpicture}
	\end{subfigure}}
	\centering{}\resizebox{.32\linewidth}{!}{
		\begin{subfigure}[h]{.55\linewidth}
			\begin{tikzpicture} 
			\begin{axis} 
			[
			ymax=1.03,
			ymin=.83,
			xmin=1.05,
			xmax=2,
			title={\LARGE Spectrally Negative},
			xlabel={\Large $\alpha$},
			ylabel={\LARGE $\E [(Z^\me_1)^{-\alpha\rho}]$},
			width=9.5cm,
			height=5.5cm,
			axis on top=true,
			axis x line=middle,
			axis y line=middle,
			legend style = {at={(1,.6)}, anchor=south east}
			]
			
			\addplot[
			solid,
			color=black,
			]
			coordinates {
				(1.05,0.980793)(1.1,0.964912)(1.15,0.951701)(1.2,0.940656)(1.25,0.931384)(1.3,0.923577)(1.35,0.916989)(1.4,0.911423)(1.45,0.906719)(1.5,0.902745)(1.55,0.899394)(1.6,0.896574)(1.65,0.894212)(1.7,0.892245)(1.75,0.890618)(1.8,0.889287)(1.85,0.888213)(1.9,0.887363)(1.95,0.886709)(2.0,0.886227)
			};
			
			\addplot[
			dashed,
			mark=+,
			color=black,
			]
			coordinates {
				(1.05,0.979112)(1.1,0.963136)(1.15,0.947644)(1.2,0.943104)(1.25,0.923653)(1.3,0.912034)(1.35,0.915387)(1.4,0.903463)(1.45,0.898207)(1.5,0.890445)(1.55,0.896356)(1.6,0.90832)(1.65,0.905037)(1.7,0.901429)(1.75,0.891846)(1.8,0.893874)(1.85,0.886099)(1.9,0.882134)(1.95,0.875484)(2.0,0.883753)
			};
			
			\addplot[
			dotted,
			color=black,
			thick,
			]
			coordinates {
				(1.05,0.987704)(1.1,0.973401)(1.15,0.960402)(1.2,0.963103)(1.25,0.94476)(1.3,0.934179)(1.35,0.933404)(1.4,0.921196)(1.45,0.919375)(1.5,0.912746)(1.55,0.916149)(1.6,0.929337)(1.65,0.942722)(1.7,0.931927)(1.75,0.917069)(1.8,0.923324)(1.85,0.912897)(1.9,0.904392)(1.95,0.898301)(2.0,0.910884)
			};
			
			\addplot[
			dotted,
			color=black,
			thick,
			]
			coordinates {
				(1.05,0.971514)(1.1,0.953607)(1.15,0.93551)(1.2,0.92616)(1.25,0.904876)(1.3,0.893746)(1.35,0.898223)(1.4,0.886094)(1.45,0.878895)(1.5,0.870045)(1.55,0.877992)(1.6,0.888177)(1.65,0.875955)(1.7,0.874248)(1.75,0.868774)(1.8,0.868429)(1.85,0.861186)(1.9,0.860727)(1.95,0.854121)(2.0,0.858794)
			};
			
			\legend {\Large True,\Large Estimate,\Large CI};
			\end{axis}
			\end{tikzpicture}
	\end{subfigure}}
	\centering{}\resizebox{.32\linewidth}{!}{
		\begin{subfigure}[h]{.55\linewidth}
			\begin{tikzpicture} 
			\begin{axis} 
			[
			ymax=1.12,
			ymin=.85,
			xmin=.05,
			xmax=2,
			title={\LARGE Symmetric},
			xlabel={\Large $\alpha$},
			ylabel={\LARGE $\E [(Z^\me_1)^{-\alpha\rho}]$},
			width=9.5cm,
			height=5.5cm,
			axis on top=true,
			axis x line=middle,
			axis y line=middle,
			legend style = {at={(1,.62)}, anchor=south east}
			]
			
			\addplot[
			solid,
			color=black,
			]
			coordinates {
				(0.05,0.898652)(0.1,0.910347)(0.15,0.921312)(0.2,0.931546)(0.25,0.94105)(0.3,0.949826)(0.35,0.957879)(0.4,0.965211)(0.45,0.97183)(0.5,0.977741)(0.55,0.982952)(0.6,0.987472)(0.65,0.991309)(0.7,0.994474)(0.75,0.996978)(0.8,0.998831)(0.85,1.00005)(0.9,1.00064)(0.95,1.00062)(1.0,1.0)(1.05,0.998798)(1.1,0.997028)(1.15,0.994705)(1.2,0.991843)(1.25,0.988459)(1.3,0.984569)(1.35,0.980189)(1.4,0.975335)(1.45,0.970024)(1.5,0.964273)(1.55,0.958098)(1.6,0.951516)(1.65,0.944545)(1.7,0.9372)(1.75,0.929499)(1.8,0.921458)(1.85,0.913094)(1.9,0.904423)(1.95,0.895462)(2.0,0.886227)
			};
			
			\addplot[
			dashed,
			mark=+,
			color=black,
			]
			coordinates {
				(0.05,0.900592)(0.1,0.912217)(0.15,0.921613)(0.2,0.923143)(0.25,0.935796)(0.3,0.952412)(0.35,0.95729)(0.4,0.965088)(0.45,0.978657)(0.5,0.983013)(0.55,0.976701)(0.6,0.992019)(0.65,0.986269)(0.7,0.985838)(0.75,0.996186)(0.8,1.00197)(0.85,0.982418)(0.9,1.00381)(0.95,0.995134)(1.0,1.01626)(1.05,0.99226)(1.1,0.998565)(1.15,0.993379)(1.2,1.00255)(1.25,0.983592)(1.3,0.991782)(1.35,0.977446)(1.4,0.97923)(1.45,0.969799)(1.5,0.955436)(1.55,0.958088)(1.6,0.949193)(1.65,0.946503)(1.7,0.918905)(1.75,0.920731)(1.8,0.926672)(1.85,0.90133)(1.9,0.906818)(1.95,0.900663)(2.0,0.894797)
			};
			
			\addplot[
			dotted,
			color=black,
			thick,
			]
			coordinates {
				(0.05,0.909843)(0.1,0.921542)(0.15,0.931148)(0.2,0.932724)(0.25,0.945345)(0.3,0.962553)(0.35,0.96736)(0.4,0.97574)(0.45,0.989453)(0.5,0.993752)(0.55,0.98769)(0.6,1.00343)(0.65,0.997637)(0.7,0.997355)(0.75,1.00882)(0.8,1.01512)(0.85,0.994279)(0.9,1.01755)(0.95,1.01089)(1.0,1.03244)(1.05,1.00723)(1.1,1.0142)(1.15,1.01005)(1.2,1.02285)(1.25,1.00038)(1.3,1.00912)(1.35,0.997014)(1.4,0.997184)(1.45,0.989123)(1.5,0.974183)(1.55,0.983897)(1.6,0.972782)(1.65,0.968103)(1.7,0.939727)(1.75,0.942774)(1.8,0.968302)(1.85,0.927237)(1.9,0.933241)(1.95,0.926959)(2.0,0.928968)
			};
			
			\addplot[
			dotted,
			color=black,
			thick,
			]
			coordinates {
				(0.05,0.891512)(0.1,0.903016)(0.15,0.912027)(0.2,0.913573)(0.25,0.926118)(0.3,0.942339)(0.35,0.947382)(0.4,0.95487)(0.45,0.968064)(0.5,0.97229)(0.55,0.966003)(0.6,0.98071)(0.65,0.974824)(0.7,0.974517)(0.75,0.983659)(0.8,0.989163)(0.85,0.970978)(0.9,0.990569)(0.95,0.980361)(1.0,1.00062)(1.05,0.977906)(1.1,0.983512)(1.15,0.977042)(1.2,0.983588)(1.25,0.967334)(1.3,0.974585)(1.35,0.958891)(1.4,0.96131)(1.45,0.95166)(1.5,0.93736)(1.55,0.934918)(1.6,0.926956)(1.65,0.926045)(1.7,0.899725)(1.75,0.899881)(1.8,0.895298)(1.85,0.87793)(1.9,0.88223)(1.95,0.876259)(2.0,0.863883)
			};
			
			\legend {\Large True,\Large Estimate, \Large CI};
			\end{axis}
			\end{tikzpicture}
	\end{subfigure}}
	\centering{}\resizebox{.32\linewidth}{!}{
		\begin{subfigure}[h]{.55\linewidth}
			\begin{tikzpicture} 
			\begin{axis} 
			[
			ymax=1.25,
			ymin=.84,
			xmin=.05,
			xmax=2,
			title={\LARGE Spectrally Positive},
			xlabel={\Large $\alpha$},
			ylabel={\LARGE $\E [(Z^\me_1)^{-\alpha\rho}]$},
			width=9.5cm,
			height=5.5cm,
			axis on top=true,
			axis x line=middle,
			axis y line=middle,
			legend style = {at={(1,.6)}, anchor=south east}
			]
			
			\addplot[
			solid,
			color=black,
			]
			coordinates {
				(0.05,1.02722)(0.1,1.05114)(0.15,1.07176)(0.2,1.08912)(0.25,1.10326)(0.3,1.11424)(0.35,1.12214)(0.4,1.12706)(0.45,1.1291)(0.5,1.12838)(0.55,1.12503)(0.6,1.11917)(0.65,1.11097)(0.7,1.10055)(0.75,1.08807)(0.8,1.07367)(0.85,1.05752)(0.9,1.03975)(0.95,1.02053)(1.0,1.0)(1.05,1.00119)(1.1,1.00392)(1.15,1.00722)(1.2,1.0104)(1.25,1.01298)(1.3,1.01461)(1.35,1.01503)(1.4,1.01407)(1.45,1.01162)(1.5,1.00762)(1.55,1.00203)(1.6,0.99485)(1.65,0.986106)(1.7,0.97584)(1.75,0.964109)(1.8,0.950983)(1.85,0.936543)(1.9,0.920874)(1.95,0.90407)(2.0,0.886227)
			};
			
			\addplot[
			dashed,
			mark=+,
			color=black,
			]
			coordinates {
				(0.05,1.03914)(0.1,1.05231)(0.15,1.06964)(0.2,1.09052)(0.25,1.1184)(0.3,1.11219)(0.35,1.11321)(0.4,1.1245)(0.45,1.12784)(0.5,1.12989)(0.55,1.13332)(0.6,1.11021)(0.65,1.10361)(0.7,1.0982)(0.75,1.08295)(0.8,1.07185)(0.85,1.05872)(0.9,1.03597)(0.95,1.02091)(1.0,1.0)(1.05,1.00142)(1.1,1.0039)(1.15,1.0077)(1.2,1.00852)(1.25,1.01628)(1.3,1.01707)(1.35,1.01898)(1.4,1.01285)(1.45,1.00204)(1.5,1.01884)(1.55,0.9944)(1.6,0.984996)(1.65,0.995283)(1.7,0.973705)(1.75,0.961337)(1.8,0.934537)(1.85,0.937138)(1.9,0.914466)(1.95,0.90214)(2.0,0.90732)
			};
			
			\addplot[
			dotted,
			color=black,
			thick,
			]
			coordinates {
				(0.05,1.05973)(0.1,1.07239)(0.15,1.09052)(0.2,1.11074)(0.25,1.1386)(0.3,1.13209)(0.35,1.13228)(0.4,1.14272)(0.45,1.14521)(0.5,1.14629)(0.55,1.14911)(0.6,1.12494)(0.65,1.11693)(0.7,1.11056)(0.75,1.09421)(0.8,1.08138)(0.85,1.06697)(0.9,1.04254)(0.95,1.02545)(1.0,1.0)(1.05,1.00311)(1.1,1.00717)(1.15,1.01242)(1.2,1.01485)(1.25,1.02395)(1.3,1.02606)(1.35,1.02907)(1.4,1.02452)(1.45,1.01432)(1.5,1.03338)(1.55,1.00975)(1.6,1.00062)(1.65,1.01502)(1.7,0.992895)(1.75,0.981297)(1.8,0.954385)(1.85,0.960776)(1.9,0.938128)(1.95,0.924151)(2.0,0.937756)
			};
			
			\addplot[
			dotted,
			color=black,
			thick,
			]
			coordinates {
				(0.05,1.01869)(0.1,1.03205)(0.15,1.04925)(0.2,1.07017)(0.25,1.09888)(0.3,1.09258)(0.35,1.09482)(0.4,1.10687)(0.45,1.11045)(0.5,1.11343)(0.55,1.11726)(0.6,1.09558)(0.65,1.09011)(0.7,1.08592)(0.75,1.07197)(0.8,1.0622)(0.85,1.05047)(0.9,1.02936)(0.95,1.01626)(1.0,1.0)(1.05,0.999729)(1.1,1.0007)(1.15,1.00303)(1.2,1.0023)(1.25,1.00875)(1.3,1.00795)(1.35,1.00907)(1.4,1.00154)(1.45,0.990093)(1.5,1.00451)(1.55,0.979338)(1.6,0.969377)(1.65,0.976889)(1.7,0.955615)(1.75,0.941644)(1.8,0.915553)(1.85,0.914981)(1.9,0.892056)(1.95,0.880845)(2.0,0.879095)
			};
			
			\legend {\Large True,\Large Estimate,\Large CI};
			\end{axis}
			\end{tikzpicture}
	\end{subfigure}}
	\caption{\footnotesize
	Top (resp. bottom) graphs show the true and estimated means $\E Z_1^\me$ 
	(resp. moments $\E[(Z_1^\me)^{-\alpha\rho}]$) with $95\%$ confidence 
	intervals based on $N=10^4$ samples as a function of $\alpha$ for the 
	spectrally negative ($\rho=1/\alpha$ as $\alpha>1$), symmetric ($\rho=1/2$) and 
	positive ($\rho=1-1/\alpha$ if $\alpha>1$ and $\rho=1$ otherwise) cases. 
	The estimates and confidence intervals of $\E Z_1^\me$ are larger and more unstable 
	for values of $\alpha$ close to $1$ (except for the spectrally negative case) since the 
	tails of its distribution are at their heaviest.}\label{fig:ecdf}
\end{figure}

The CLT is not applicable when the variables have infinite variance and can hence 
not be used for the CIs of $\E Z_1^\me$ (except for the spectrally negative case). 
Thus, we use bootstrapping CIs throughout, constructed as follows. Given an iid sample 
$\{x_k\}_{k=1}^n$ and a confidence level $1-\lambda$, we construct the sequence 
$\{\mu_i\}_{i=1}^n$, where $\mu_i=\frac{1}{n}\sum_{k=1}^n x_{k}^{(i)}$ 
and $\{x_k^{(i)}\}_{k=1}^n$ is obtained by resampling with replacement from the set 
$\{x_k\}_{k=1}^n$. We then use the quantiles $\lambda/2$ and 
$1-\lambda/2$ of the empirical distribution of the sample $\{\mu_i\}_{i=1}^n$ as the CI's endpoints for $\E[x_1]$ 
with the point estimator $\mu=\frac{1}{n}\sum_{k=1}^n x_k$. 

\subsubsection{First passage times of weakly stable processes\label{subsec:num-hit}}
Define the first passage time $\hat\sigma_x=\inf\{t>0: \hat Z_t>x\}$ of the weakly stable 
process $\hat{Z}=(\hat{Z}_t)_{t\geq0}=(Z_t+\mu t)_{t\geq0}$ for some $\mu\in\R$ and all
$x>0$. As a concrete example of the unbiased simulation from 
Subsection~\ref{subsec:hit-times} above, we estimate $\E\hat\sigma_x$ in the present 
subsection. To ensure that the previous expectation is finite, it suffices that 
$\E\hat Z_1=\mu+\E Z_1>0$~\cite[Ex.~VI.6.3]{MR1406564}, where
$\E Z_1=(\sin(\pi\rho)-\sin(\pi(1-\rho)))\Gamma(1-\frac{1}{\alpha})/\pi$ (see 
e.g.~\cite[Eq.~(A.2)]{ExactSim}). Since the time horizon over which the weakly
stable process is simulated is random and equal to $\varsigma$, we chose 
$g:s\mapsto 2(1+s)^{-3}$ to ensure $\E\varsigma<\infty$. The results presented in 
Figure~\ref{fig:hit-time} used the fixed values $\mu=1-\E Z_1$ and $x=1$.
\begin{figure}[H]
	\centering{}\resizebox{.32\linewidth}{!}{
		\begin{subfigure}[h]{.52\linewidth}
			\begin{tikzpicture} 
			\begin{axis} 
			[
			xmin=1.05,
			xmax=2,
			ymin=0,
			ymax=.35,
			title={\LARGE Spectrally Negative},
			xlabel={\Large $\alpha$},
			ylabel={\LARGE $\E\hat\sigma_x$},
			width=9cm,
			height=5cm,
			axis on top=true,
			axis x line=bottom,
			axis y line=middle,
			legend style = {at={(1,.65)}, anchor=south east}
			]
			
			\addplot[
			solid,
			color=black,
			]
			coordinates {
				(1.05,0.0355168)(1.1,0.0305024)(1.15,0.0632221)(1.2,0.133715)(1.25,0.146268)(1.3,0.127565)(1.35,0.126847)(1.4,0.112314)(1.45,0.140099)(1.5,0.11688)(1.55,0.107495)(1.6,0.128514)(1.65,0.16347)(1.7,0.157656)(1.75,0.158637)(1.8,0.16762)(1.85,0.141316)(1.9,0.146522)(1.95,0.149783)(2.0,0.153162)
			};
			
			\addplot[
			dotted,
			color=black,
			thick,
			]
			coordinates {
				(1.05,0.0148834)(1.1,0.0209762)(1.15,0.0331911)(1.2,0.0734562)(1.25,0.0767086)(1.3,0.0693581)(1.35,0.075434)(1.4,0.0877445)(1.45,0.0943225)(1.5,0.093253)(1.55,0.0851608)(1.6,0.106422)(1.65,0.109884)(1.7,0.128051)(1.75,0.132148)(1.8,0.13011)(1.85,0.118451)(1.9,0.119165)(1.95,0.129863)(2.0,0.131743)
			};
			
			\addplot[
			dotted,
			color=black,
			thick,
			]
			coordinates {
				(1.05,0.0681608)(1.1,0.0438837)(1.15,0.10307)(1.2,0.205681)(1.25,0.230475)(1.3,0.212453)(1.35,0.197805)(1.4,0.141891)(1.45,0.196904)(1.5,0.145012)(1.55,0.135109)(1.6,0.154353)(1.65,0.216703)(1.7,0.195515)(1.75,0.189139)(1.8,0.213224)(1.85,0.16957)(1.9,0.182347)(1.95,0.172067)(2.0,0.1782)
			};
			
			\legend {\Large Estimate, \Large CI};
			\end{axis}
			\end{tikzpicture}
	\end{subfigure}}
	\centering{}\resizebox{.32\linewidth}{!}{
		\begin{subfigure}[h]{.52\linewidth}
			\begin{tikzpicture} 
			\begin{semilogyaxis} 
			[
			log basis y=2,
			ymax=10,
			ymin=.125,
			xmin=1.05,
			xmax=2,
			title={\LARGE Symmetric},
			xlabel={\Large $\alpha$},
			ylabel={\LARGE $\E\hat\sigma_x$},
			width=9cm,
			height=5cm,
			axis on top=true,
			axis x line=bottom,
			axis y line=middle,
			legend style = {at={(1,.65)}, anchor=south east}
			]
			
			\addplot[
			solid,
			color=black,
			]
			coordinates {
				(1.05,2.98598)(1.1,0.861246)(1.15,0.752034)(1.2,0.750415)(1.25,0.564297)(1.3,0.401376)(1.35,0.439275)(1.4,0.262984)(1.45,0.309363)(1.5,0.393642)(1.55,0.341917)(1.6,0.247718)(1.65,0.248733)(1.7,0.218251)(1.75,0.212116)(1.8,0.290216)(1.85,0.198565)(1.9,0.163439)(1.95,0.19663)(2.0,0.183676)
			};
			
			\addplot[
			dotted,
			color=black,
			thick,
			]
			coordinates {
				(1.05,0.593684)(1.1,0.436451)(1.15,0.351834)(1.2,0.396243)(1.25,0.362794)(1.3,0.293672)(1.35,0.306939)(1.4,0.21902)(1.45,0.222725)(1.5,0.241723)(1.55,0.199534)(1.6,0.1928)(1.65,0.165719)(1.7,0.193732)(1.75,0.182787)(1.8,0.19743)(1.85,0.160629)(1.9,0.138997)(1.95,0.144844)(2.0,0.146013)
			};
			
			\addplot[
			dotted,
			color=black,
			thick,
			]
			coordinates {
				(1.05,7.46912)(1.1,1.4852)(1.15,1.45231)(1.2,1.24926)(1.25,0.853479)(1.3,0.538631)(1.35,0.604994)(1.4,0.319087)(1.45,0.453252)(1.5,0.655716)(1.55,0.538893)(1.6,0.31712)(1.65,0.395309)(1.7,0.260101)(1.75,0.24864)(1.8,0.421509)(1.85,0.24289)(1.9,0.191977)(1.95,0.27596)(2.0,0.228492)
			};
			
			\legend {\Large Estimate, \Large CI};
			\end{semilogyaxis}
			\end{tikzpicture}
	\end{subfigure}}
	\centering{}\resizebox{.32\linewidth}{!}{
		\begin{subfigure}[h]{.52\linewidth}
			\begin{tikzpicture} 
			\begin{semilogyaxis} 
			[
			log basis y=2,
			ymax=32,
			ymin=.125,
			xmin=1.05,
			xmax=2,
			title={\LARGE Spectrally Positive},
			xlabel={\Large $\alpha$},
			ylabel={\LARGE $\E\hat\sigma_x$},
			width=9cm,
			height=5cm,
			axis on top=true,
			axis x line=bottom,
			axis y line=middle,
			legend style = {at={(1,.65)}, anchor=south east}
			]
			
			\addplot[
			solid,
			color=black,
			]
			coordinates {
				(1.05,10.5664)(1.1,6.38489)(1.15,3.1315)(1.2,2.71196)(1.25,1.72568)(1.3,1.17879)(1.35,1.10569)(1.4,0.756918)(1.45,0.870082)(1.5,0.528266)(1.55,0.571805)(1.6,0.409035)(1.65,0.36065)(1.7,0.344029)(1.75,0.251875)(1.8,0.240822)(1.85,0.257509)(1.9,0.198217)(1.95,0.189426)(2.0,0.185816)
			};
			
			\addplot[
			dotted,
			color=black,
			thick,
			]
			coordinates {
				(1.05,8.06394)(1.1,4.38441)(1.15,2.51044)(1.2,2.07684)(1.25,1.40979)(1.3,1.02325)(1.35,0.933228)(1.4,0.64893)(1.45,0.688029)(1.5,0.447094)(1.55,0.472654)(1.6,0.327688)(1.65,0.290094)(1.7,0.286216)(1.75,0.213871)(1.8,0.206713)(1.85,0.208332)(1.9,0.16779)(1.95,0.163294)(2.0,0.148387)
			};
			
			\addplot[
			dotted,
			color=black,
			thick,
			]
			coordinates {
				(1.05,13.5615)(1.1,9.10861)(1.15,3.96681)(1.2,3.48429)(1.25,2.10815)(1.3,1.35359)(1.35,1.30399)(1.4,0.883739)(1.45,1.08997)(1.5,0.622348)(1.55,0.700305)(1.6,0.51522)(1.65,0.4456)(1.7,0.411831)(1.75,0.296341)(1.8,0.27944)(1.85,0.319753)(1.9,0.23392)(1.95,0.218846)(2.0,0.236298)
			};
			
			\legend {\Large Estimate, \Large CI};
			\end{semilogyaxis}
			\end{tikzpicture}
	\end{subfigure}}
	\caption{\footnotesize
	The graphs show estimates of $\E\hat\sigma_x$ with $95\%$ CIs based 
	on $N=4\times10^4$ samples as a function of $\alpha\in(1,2]$ for the spectrally 
	negative, symmetric and positive cases. The estimates are obtained using the 
	procedure in Subsection~\ref{subsec:hit-times} with the density function 
	$g:s\mapsto\delta(1+s)^{-1-\delta}$ for $\delta=2$. The computation of each 
	estimate (employing $N=4\times10^4$ samples) took approximately 290 seconds, 
	with little variation for different values of $\alpha$.
	}\label{fig:hit-time}
\end{figure}
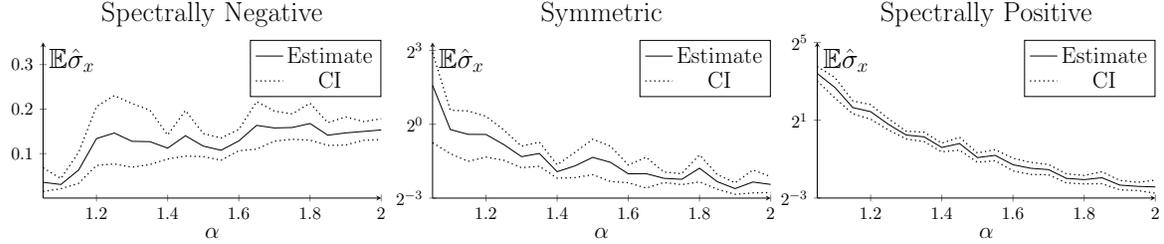

\subsubsection{Marginal of normalised stable excursions\label{subsec:num-excursion}}

Let $Z^\ex=(Z^\ex_t)_{t\in[0,1]}$ be a normalised stable excursion associated to the 
stable process $Z$ with parameters $(\alpha,\rho)$. By~\cite[Thm~3]{MR1465814}, 
if $Z$ has negative jumps (i.e. $\rho\in(0,1)$ \& $\alpha\leq 1$ or 
$\rho\in(1-\frac{1}{\alpha},\frac{1}{\alpha}]$ \& $\alpha>1$), the laws of 
$(Z^\me_t)_{t\in[0,1)}$ and $(Z^\ex_t)_{t\in[0,1)}$ are \emph{equivalent}: 
$\P(Z^\ex\in A)=\E[(Z^\me_1)^{-\alpha}\1_{\{Z^\me\in A\}}]/\E[(Z^\me_1)^{-\alpha}]$ 
for any measurable set $A$ in the Skorokhod space $\D[0,1)$~\cite[Ch.~3]{MR1700749}. 
We remark that $Z_1^\me=Z_{1-}^\me$ a.s. and $\E[(Z^\me_1)^{-\alpha}]<\infty$ since 
$\alpha<1+\alpha\rho$ and $\E[(Z_1^\me)^{\gamma}]<\infty$ for all 
$\gamma\in(-1-\alpha\rho,\alpha)$~\cite[Thm~1]{MR2599201}. As an illustration of 
Algorithm~\ref{alg:fd_meander}, we now present a Monte Carlo estimation of 
$\E Z^\ex_{1/2}$ by applying the procedure of Subsection~\ref{subsec:num-marginal} 
for the expectations on the right side of 
$\E Z^\ex_{1/2}=\E[Z^\me_{1/2}(Z^\me_1)^{-\alpha}]/\E[(Z^\me_1)^{-\alpha}]$.
\begin{figure}[H]
	\centering{}\resizebox{.49\linewidth}{!}{
		\begin{subfigure}[h]{.66\linewidth}
			\begin{tikzpicture} 
			\begin{axis} 
			[
			ymin=0,
			ymax=3,
			xmin=1.05,
			xmax=1.95,
			title={\Large Spectrally Negative},
			xlabel={\Large $\alpha$},
			ylabel={\Large $\E Z^\ex_{1/2}$},
			width=11cm,
			height=4.5cm,
			axis on top=true,
			axis x line=middle,
			axis y line=middle,
			legend style = {at={(.5,.6)}, anchor=south east}
			]
			
			\addplot[
			solid,
			color=black,
			]
			coordinates {
				(1.05,0.530383)(1.1,0.559883)(1.15,0.6334)(1.2,0.656438)(1.25,0.704291)(1.3,0.757524)(1.35,0.734234)(1.4,0.794963)(1.45,0.931519)(1.5,0.980995)(1.55,0.921686)(1.6,0.9532)(1.65,0.953044)(1.7,1.10152)(1.75,1.4311)(1.8,1.01979)(1.85,1.04174)(1.9,1.11841)(1.95,1.14631)
			};
			
			
			\addplot[
			dotted,
			color=black,
			thick,
			]
			coordinates {
				(1.05,0.698717)(1.1,0.863243)(1.15,0.907723)(1.2,0.993843)(1.25,1.06037)(1.3,1.4465)(1.35,0.940044)(1.4,1.26099)(1.45,1.47691)(1.5,1.60679)(1.55,1.4394)(1.6,1.43149)(1.65,2.04983)(1.7,2.29369)(1.75,2.3151)(1.8,1.72999)(1.85,1.39962)(1.9,1.6969)(1.95,1.59388)
			};
			
			\addplot[
			dotted,
			color=black,
			thick,
			]
			coordinates {
				(1.05,0.406372)(1.1,0.367867)(1.15,0.444498)(1.2,0.434258)(1.25,0.469186)(1.3,0.39587)(1.35,0.57038)(1.4,0.512111)(1.45,0.443541)(1.5,0.392365)(1.55,0.587804)(1.6,0.625698)(1.65,0.519761)(1.7,0.492312)(1.75,0.405048)(1.8,0.603251)(1.85,0.759939)(1.9,0.727601)(1.95,0.824329)
			};
			
			\legend {\large Estimate, \large CI};
			\end{axis}
			\end{tikzpicture}
	\end{subfigure}}
	\centering{}\resizebox{.49\linewidth}{!}{
		\begin{subfigure}[h]{.66\linewidth}
			\begin{tikzpicture} 
			\begin{axis} 
			[
			ymax=3,
			ymin=0,
			xmin=1.05,
			xmax=1.95,
			title={\Large Symmetric},
			xlabel={\Large $\alpha$},
			ylabel={\Large $\E Z^\ex_{1/2}$},
			width=11cm,
			height=4.5cm,
			axis on top=true,
			axis x line=middle,
			axis y line=middle,
			legend style = {at={(.5,.6)}, anchor=south east}
			]
			
			\addplot[
			solid,
			color=black,
			]
			coordinates {
				(1.05,0.751573)(1.1,0.742016)(1.15,0.763336)(1.2,0.781047)(1.25,0.766908)(1.3,0.835764)(1.35,0.812025)(1.4,0.8917)(1.45,0.945328)(1.5,0.910243)(1.55,0.93589)(1.6,0.944191)(1.65,0.961377)(1.7,1.06192)(1.75,1.03209)(1.8,1.11497)(1.85,1.05355)(1.9,1.09299)(1.95,1.1213)
			};
			
			
			\addplot[
			dotted,
			color=black,
			thick,
			]
			coordinates {
				(1.05,0.806632)(1.1,0.89076)(1.15,0.899819)(1.2,1.0117)(1.25,0.921736)(1.3,0.984712)(1.35,1.04617)(1.4,0.996687)(1.45,1.47601)(1.5,1.23605)(1.55,1.16515)(1.6,1.15455)(1.65,1.46503)(1.7,1.58063)(1.75,1.79306)(1.8,1.85624)(1.85,1.68168)(1.9,1.46542)(1.95,2.40068)
			};
			
			\addplot[
			dotted,
			color=black,
			thick,
			]
			coordinates {
				(1.05,0.699596)(1.1,0.606503)(1.15,0.649044)(1.2,0.602583)(1.25,0.636219)(1.3,0.710897)(1.35,0.622683)(1.4,0.800682)(1.45,0.608943)(1.5,0.676962)(1.55,0.746059)(1.6,0.768734)(1.65,0.635523)(1.7,0.723679)(1.75,0.713566)(1.8,0.64658)(1.85,0.651235)(1.9,0.811791)(1.95,0.685003)
			};
			
			\legend {\large Estimate, \large CI};
			\end{axis}
			\end{tikzpicture}
	\end{subfigure}}
	\caption{\footnotesize
	The pictures show the quotient of the Monte Carlo estimates of the expectations 
	on the right side of $\E Z^\ex_{1/2}=\E[Z^\me_{1/2}(Z^\me_1)^{-\alpha}]/
	\E[(Z^\me_1)^{-\alpha}]$ as a function of the stability parameter $\alpha\in(1,2)$ for 
	$N=4\times10^4$ samples. 
	Computing each estimate (for $N=4\times10^4$) took approximately 160.8 
	(resp. 123.1) seconds in the spectrally negative (resp. symmetric) case, with little 
	variation in $\alpha$. }\label{fig:marginal_excursion}
\end{figure}

As before, we use the fixed precision of $\varepsilon=2^{-32}$. 
The CIs are naturally constructed from the bootstrapping CIs (as in 
Subsection~\ref{subsec:num-marginal} above) for each of the expectations 
$\E[Z^\me_{1/2}(Z^\me_1)^{-\alpha}]$ and $\E[(Z^\me_1)^{-\alpha}]$ and combined
to construct a CI for $\E Z^\ex_{1/2}$.

\section{Proofs and technical results\label{sec:proofs}}
\subsection{Approximation of piecewise linear convex functions\label{subsec:approx_linear}}
The main aim of the present subsection is to prove 
Proposition~\ref{prop:Sandwich-CM} and Proposition~\ref{prop:General_UpLo}, key 
ingredients of the algorithms in Section~\ref{sec:algorithms}. Throughout this 
subsection, we will assume that $f$ is a continuous piecewise linear  finite variation 
function on a compact interval $[a,b]$ with at most countably many faces. More 
precisely, there exists a set consisting of  $N\in\N\cup\{\infty\}$ pairwise disjoint 
nondegenerate subintervals $\{(a_n,b_n):n\in \mZ^{N+1}_1\}$ of $[a,b]$ such that 
$\sum_{n=1}^{N}(b_n-a_n)=b-a$, $f$ is linear on each $(a_n,b_n)$, and 
$\sum_{n=1}^{N} |f(b_n)-f(a_n)|<\infty$ (if $N=\infty$ we set 
$\mZ^\infty=\mathbb{Z}$ and thus $\mZ_1^\infty=\mathbb{N}$; 
recall also $\mZ^n=\{k\in\mathbb{Z}:k<n\}$ and 
$\mZ^n_m=\mZ^n\setminus\mZ^m$ for $n,m\in\mathbb{Z}$). A face of $f$, 
corresponding to a subinterval $(a_n,b_n)$, is given by the pair $(l_n,h_n)$, 
where $l_n=b_n-a_n>0$ is its length and $h_n=f(b_n)-f(a_n)\in\R$ its height.
Consequently, its slope equals $h_n/l_n$ and the following representation 
holds (recall $x^+=\max\{0,x\}$ for $x\in\R$):
\begin{equation}\label{eq:pw_linear_rep}
f(t)=f(a)+\sum_{n=1}^N h_n\min\{(t-a_n)^+/l_n,1\}, \qquad t\in[a,b].
\end{equation}
The number $N$ 
in representation~\eqref{eq:pw_linear_rep}
is not unique in general as any face may be subdivided into two faces with the same slope.
Moreover, for a fixed $f$ and $N$, the set of intervals $\{(a_n,b_n):n\in \mZ^{N+1}_1\}$ need not be unique. 
Furthermore we stress that the sequence of faces 
in~\eqref{eq:pw_linear_rep} does not necessarily respect the chronological ordering. Put differently, the sequence
$(a_n)_{n\in\mZ^{N+1}_1}$
need not be
increasing.
We start with an elementary but useful result. 

\begin{lem}\label{lem:minimal-faces}
	Let $f:[a,b]\to\R$ be a continuous piecewise linear function with $N<\infty$ 
	faces $(l_k,h_k)$, $k\in\mZ^{N+1}_1$. 
	Let $\mathcal{K}$ be the  set of 
	piecewise linear functions 
	$f_\pi:[a,b]\to\R$ 
	with initial value $f(a)$, obtained from 
	$f$ by sorting its faces according to a bijection $\pi:\mZ^{N+1}_1 \to\mZ^{N+1}_1$.
	More precisely, defining $a_k^\pi=a+\sum_{j\in \mZ^k_1} l_{\pi(j)}$ 
	for any $k\in\mZ^{N+1}_1$, $f_\pi$ in $\mathcal{K}$ is given by 
	\[f_\pi(t)=f(a)+\sum_{k=1}^{N} h_{\pi(k)}
	\min\big\{\big(t-a_k^\pi\big)^+/l_{\pi(k)},	1\big\},
	\qquad t\in[a,b].\]
	If $\pi^*:\mZ^{N+1}_1 \to\mZ^{N+1}_1$ sorts the faces by increasing slope,
	$h_{\pi^*(k)}/l_{\pi^*(k)}\leq h_{\pi^*(k+1)}/l_{\pi^*(k+1)}$ for $k\in\mZ^N_1$, 
	then $f_{\pi^*}$ is the unique convex function in $\mathcal{K}$ and satisfies $f\geq f_{\pi^*}$ pointwise. 
\end{lem}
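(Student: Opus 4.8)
The plan is to establish the three assertions in turn. The single tool used repeatedly is the elementary characterisation: a continuous piecewise linear function on a compact interval is convex if and only if the sequence of slopes of its faces, \emph{listed chronologically}, is non-decreasing. By construction the faces of any $f_\pi\in\mathcal{K}$ are already chronologically ordered, since $a_k^\pi=a+\sum_{j\in\mZ_1^k}l_{\pi(j)}$ is strictly increasing in $k\in\mZ_1^{N+1}$; hence the chronological slope sequence of $f_{\pi^*}$ is $\big(h_{\pi^*(k)}/l_{\pi^*(k)}\big)_{k\in\mZ_1^{N+1}}$, which is non-decreasing by the definition of $\pi^*$, so $f_{\pi^*}$ is convex. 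Conversely, if $f_\pi\in\mathcal{K}$ is convex then $\big(h_{\pi(k)}/l_{\pi(k)}\big)_k$ is non-decreasing, i.e. $\pi$ sorts the faces by non-decreasing slope; I would then observe that all such $\pi$ produce the same function — group the faces into maximal equal-slope blocks, note that the left endpoint of the $i$-th block equals $a$ plus the total length of all strictly-smaller-slope faces (so it is ordering-independent) and that the graph over a block is a single straight segment — so the convex member of $\mathcal{K}$ is unique and equals $f_{\pi^*}$.

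For $f\ge f_{\pi^*}$, first note $f\in\mathcal{K}$: indeed $f=f_\sigma$ for the bijection $\sigma$ listing the faces chronologically, by~\eqref{eq:pw_linear_rep}; so it suffices to prove $f_\pi\ge f_{\pi^*}$ pointwise for every bijection $\pi$. Fix $t\in[a,b]$, set $B=t-a\in[0,b-a]$, and write $s_j=h_j/l_j$ for the slope of face $j$. A direct computation from the definition of $f_\pi$ shows that the vector $y^{(\pi,t)}$ with $y^{(\pi,t)}_{\pi(k)}=\min\{(t-a_k^\pi)^+,\,l_{\pi(k)}\}$ satisfies $0\le y^{(\pi,t)}_j\le l_j$, $\sum_j y^{(\pi,t)}_j=B$, and $f_\pi(t)-f(a)=\sum_j s_j\,y^{(\pi,t)}_j$; thus $f_\pi(t)-f(a)$ is the value of the fixed linear functional $y\mapsto\sum_j s_j y_j$ at a point of the polytope $P_B=\{y:\ 0\le y_j\le l_j,\ \sum_j y_j=B\}$, while $y^{(\pi^*,t)}$ is its greedy point (faces of smallest slope filled first). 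The claim then reduces to showing that $y^{(\pi^*,t)}$ minimises $\sum_j s_j y_j$ over $P_B$, which I would do by an exchange argument: with $D=y-y^{(\pi^*,t)}$ and $s^\ast$ the slope of the unique partially filled face, one has $D_j\le 0\Rightarrow s_j\le s^\ast$ and $D_j\ge 0\Rightarrow s_j\ge s^\ast$, so $\sum_j s_j D_j=\sum_j(s_j-s^\ast)D_j\ge 0$ since $\sum_j D_j=0$. Hence $f_{\pi^*}(t)\le f_\pi(t)$ for all $t$ and all $\pi$, and in particular $f_{\pi^*}\le f$.

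The main obstacle is this last step, where care is needed so that the linear-programming reformulation is valid for \emph{every} ordering $\pi$ (so that all the $f_\pi(t)$ are feasible values of the \emph{same} program) and so that greedy stays optimal even though the heights $h_j$ — hence the slopes $s_j$ — may have either sign; the sign-free identity $\sum_j s_j D_j=\sum_j(s_j-s^\ast)D_j$ above is what handles this. An alternative that avoids the indexing bookkeeping is a bubble-sort argument: if $\pi$ has an adjacent inversion at positions $(k,k+1)$, then the transposed bijection $\pi'$ satisfies $f_{\pi'}\le f_\pi$, because the two functions coincide off the interval $[a_k^\pi,a_{k+2}^\pi]$, on which they join the same two endpoints with $f_\pi$ concave (slopes decreasing) and $f_{\pi'}$ convex (slopes increasing), whence $f_\pi\ge(\text{chord})\ge f_{\pi'}$ there; finitely many such transpositions carry $\pi$ to a slope-sorted ordering, which by the uniqueness step is $f_{\pi^*}$, and chaining the inequalities gives $f_{\pi^*}\le f_\sigma=f$.
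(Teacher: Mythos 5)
Your proposal is correct, and your primary argument takes a genuinely different route from the paper's. The paper proves $f\ge f_{\pi^*}$ by a bubble-sort argument: starting from the chronological ordering, it repeatedly finds an adjacent inversion $h_i/l_i>h_{i+1}/l_{i+1}$, observes (via the parallelogram picture) that transposing those two faces strictly decreases the function on an open subinterval while leaving it unchanged elsewhere, and appeals to finiteness of $\mathcal{K}$ to conclude termination at the unique convex member $f_{\pi^*}$. Your alternative proof at the end is precisely this argument. Your primary proof instead rewrites $f_\pi(t)-f(a)=\sum_j s_j\,y^{(\pi,t)}_j$ as the value of a fixed linear functional on the polytope $P_B=\{y:0\le y_j\le l_j,\ \sum_j y_j=B\}$, shows that every $f_\pi(t)$ is a feasible value of the \emph{same} linear program, and identifies $y^{(\pi^*,t)}$ as the greedy minimiser; the sign-free exchange identity $\sum_j s_j D_j=\sum_j(s_j-s^*)D_j\ge 0$ (with $s^*$ the pivot slope and $\sum_j D_j=0$) then gives $f_{\pi^*}(t)\le f_\pi(t)$ for \emph{all} $\pi$ simultaneously, not just for $\pi$ encountered along a sorting chain. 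This buys a one-shot, pointwise-in-$t$ argument that avoids the combinatorial bookkeeping and the finiteness-of-$\mathcal{K}$ termination step, and it makes transparent why ties in slope cause no difficulty (the greedy optimum value is tie-independent). Your uniqueness argument is also slightly more explicit than the paper's one-line appeal to nondecreasing derivatives: you observe that any slope-sorted $\pi$ yields equal-slope blocks whose left endpoints and graphs are ordering-independent, which is a cleaner justification that all such orderings produce the same function. Both of your routes are sound; I would note only that in the LP argument, when $t$ coincides with a breakpoint $a^{\pi^*}_m$ there is no uniquely ``partially filled'' face and you should state that $s^*$ may be taken to be any value in the interval $[s_{\pi^*(m-1)},s_{\pi^*(m)}]$, which does not affect the inequality since $\sum_j D_j=0$.
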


\begin{proof}
	Relabel the faces $(l_k,h_k)$, $k\in\mZ^{N+1}_1$ of $f$ so that they are listed 
	in the chronological order, i.e. as they appear in the function $t\mapsto f(t)$ 
	with increasing $t$. If every pair of consecutive faces of $f$ is ordered by slope (i.e.
	$h_i/l_i\leq h_{i+1}/l_{i+1}$ for all $i\in\mZ^N_1$),
	then $f$ is convex and $f=f_{\pi^*}$.
	Otherwise, two consecutive faces of $f$ 
	satisfy $h_i/l_i>h_{i+1}/l_{i+1}$ for some $i\in\mZ^N_1$.
	Swapping the two faces yields a smaller function $f_{\pi_1}$, see 
	Figure~\ref{fig:one-swap}. Indeed, after the swap, the functions $f$ and 
	$f_{\pi_1}$ coincide on the set 
	$[a,a+\sum_{k\in\mZ^i_1}l_k]\cup[a+\sum_{k\in \mZ^{i+2}_1}l_k,b]$. In the 
	interval $[a+\sum_{k\in \mZ^i_1}l_k,a+\sum_{k\in \mZ^{i+2}_1}l_k]$, the 
	segments form a parallelogram whose lower (resp. upper) sides belong to the 
	graph of $f_{\pi_1}$ (resp. $f$).
		
	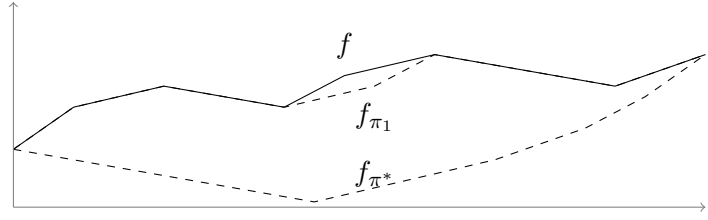
\begin{figure}[H]
		\begin{tikzpicture}
		\draw[solid, mark=o] (0,5*.14) 
		-- ++(2*.4,4*.14) -- ++(3*.4,2*.14) -- ++(4*.4,-2*.14) -- ++(2*.4,3*.14) 
		-- ++(3*.4,2*.14) -- ++(6*.4,-3*.14) --++ (3*.4,3*.14)
		node[above] {};
		\draw[dashed, mark=+] (0,5*.14) 
		-- ++(2*.4,4*.14) -- ++(3*.4,2*.14) -- ++(4*.4,-2*.14) -- ++(3*.4,2*.14)
		-- ++(2*.4,3*.14) -- ++(6*.4,-3*.14)  --++ (3*.4,3*.14)
		node[below] {};
		\draw[dashed, mark=+] (0,5*.14) 
		-- ++(4*.4,-2*.14) -- ++(6*.4,-3*.14) -- ++(3*.4,2*.14) -- ++(3*.4,2*.14)
		-- ++(3*.4,3*.14) -- ++(2*.4,3*.14)  --++ (2*.4,4*.14)
		node[below] {};
		
		\node[circle, scale=0.5, label=above :{$f$}] at (11*.4,12*.14) {}{};
		\node[circle, scale=0.5, label=below :{$f_{\pi_1}$}] at (12*.4,11*.14) {}{};
		\node[circle, scale=0.5, label=below :{$f_{\pi^*}$}] at (12*.4,5.5*.14) {}{};
		
		\draw [thin, draw=gray, ->] (0,-.5*.14) -- (23*.4,-.5*.14);
		\draw [thin, draw=gray, ->] (0,-.5*.14) -- (0,19*.14);
		
		\end{tikzpicture}
		\caption{\footnotesize Swapping two consecutive and unsorted faces of $f$.
		}\label{fig:one-swap}
	\end{figure}

Applying the argument in the preceding paragraph to $f_{\pi_1}$, we either have $f_{\pi_1}=f_{\pi^*}$ or we may construct $f_{\pi_2}$, 
 which is  strictly smaller than $f_{\pi_1}$
on a non-empty open subinterval of  $[a,b]$,
satisfying $f_{\pi_1}\geq f_{\pi_2}$.
Since 
the set $\mathcal{K}$ is finite, this procedure 
necessarily terminates at $f_{\pi^*}$ after finitely many steps, 
implying $f\geq f_{\pi^*}$.
Since any convex function in $\mathcal{K}$ must have a nondecreasing derivative a.e.,
it has to be equal to $f_{\pi^*}$ and the lemma follows. 
\end{proof}

A natural approximation of a piecewise linear convex function $f$ can be 
constructed from the first $n<N$ faces of $f$ by filling in the remainder with a 
horizontal face. More precisely, for any $n\in\mZ_1^N$ let $f_n$ be the 
piecewise linear convex function with $f_n(a)=f(a)$ and faces 
$\{(l_k,h_k):k\in\mZ_1^{n+1}\}\cup\{(\ti{l}_n,0)\}$, where 
$\ti{l}_n = \sum_{k=n+1}^N l_k$.
By Corollary~\ref{cor:Conv_app}, the inequality 
$\|f-f_n\|_\infty\leq\max\{\sum_{k=n+1}^N h_k^+, \sum_{k=n+1}^N h_k^-\}$ holds, 
where $\|g\|_\infty=\sup_{t\in[a,b]}|g(t)|$ denotes the supremum norm 
of a function $g:[a,b]\to\R$ and $x^-=\max\{0,-x\}$ for any $x\in\R$.
If $f=C(X)$ is the convex minorant 
of a L\'evy process $X$, both tail sums decay to zero geometrically 
fast~\citep[Thms~1~\&~2]{LevySupSim}. However, it appears to be difficult directly to 
obtain almost sure bounds on the maximum of the two (dependent!) sums, which 
would be necessary for an $\varepsilon$SS algorithm for $C(X)$. We proceed by 
``splitting'' the problem as follows.   

The slopes of the faces of a piecewise linear convex function $f$ may form an 
unbounded set. In particular, the slopes of the faces accumulating at  $a$ could 
be arbitrarily negative, making it impossible to construct a piecewise linear 
lower bound with \textit{finitely many faces} starting at $f(a)$. 
In Proposition~\ref{prop:Sandwich-CM} we focus on functions without faces of 
negative slope (as is the case with the convex minorants of pre- and post-minimum 
processes and L\'evy meanders in Proposition~\ref{prop:pre_post} and 
Theorem~\ref{thm:meander-minorant}), which makes it easier to isolate the 
errors. We deal with the general case in Proposition~\ref{prop:General_UpLo} below.

\begin{prop}\label{prop:Sandwich-CM}
Let $f:[a,b]\to\R$ be a piecewise linear convex function with $N=\infty$ faces 
$(l_n,h_n)$, $n\in\N$, satisfying $h_n\geq 0$ for all $n$. Let the constants $(c_n)_{n\in\N}$ 
satisfy $c_n\geq \sum_{k=n+1}^\infty h_k$ and $c_{n+1}\leq c_n-h_{n+1}$ for $n\in\N$. 
There exist unique piecewise linear convex functions $f_n^\da$ and $f_n^\ua$ on $[a,b]$,  
satisfying $f_n^\da(a)=f_n^\ua(a)=f(a)$, with faces 
$\{(l_k,h_k):k\in\mZ_1^{n+1}\}\cup\{(\ti{l}_n,0)\}$ and 
$\{(l_k,h_k):k\in\mZ_1^{n+1}\}\cup\{(\ti{l}_n,c_n)\}$, respectively, 
where $\ti{l}_n = \sum_{k=n+1}^\infty l_k$.
Moreover, for all $n\in\N$ 
the following holds: 
\nf{(a)}~$f_{n+1}^\da\geq f_n^\da$, 
\nf{(b)}~$f_n^\ua\geq f_{n+1}^\ua$, 
\nf{(c)}~$f_n^\ua\geq f\geq f_n^\da$ and 
\nf{(d)}~$\|f_n^\ua-f_n^\da\|_\infty=f_n^\ua(b)-f_n^\da(b)=c_n$. 
\end{prop}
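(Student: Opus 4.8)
The plan is to deduce all four assertions from Lemma~\ref{lem:minimal-faces} (the slope-sorted rearrangement of a finite face family is the unique convex one, and is the pointwise smallest rearrangement among those with a given initial value) together with the bound $\|f-f_n^\da\|_\infty\le\sum_{k>n}h_k$, the displayed consequence of Corollary~\ref{cor:Conv_app}, which is valid here since every $h_k\ge0$ and gives $f_n^\da\to f$ uniformly because $\sum_k h_k=f(b)-f(a)<\infty$. Note first that $\ti{l}_n=\sum_{k>n}l_k\in(0,b-a)$ and $c_n\ge\sum_{k>n}h_k\ge0$, so the families $\{(l_k,h_k):k\le n\}\cup\{(\ti{l}_n,0)\}$ and $\{(l_k,h_k):k\le n\}\cup\{(\ti{l}_n,c_n)\}$ are finite, and Lemma~\ref{lem:minimal-faces} (applied to any ordering of them) yields unique convex piecewise linear functions with each family and initial value $f(a)$, namely $f_n^\da$ and $f_n^\ua$. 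I would then record two elementary consequences of minimality, each obtained by comparing the smaller candidate with the rearrangement of \emph{its own} faces that mimics the break-points of the larger candidate. (A)~If two functions with equal initial value share all faces except that $F$ carries $(\ell,h)$ where $F'$ carries $(\ell,h')$ with $h\ge h'$, then $F\ge F'$: the rearrangement $g$ of $F'$'s faces that copies $F$'s break-points and places $(\ell,h')$ where $F$ has $(\ell,h)$ satisfies $g\le F$ piece by piece (equal on shared pieces, $h'\le h$ on the remaining one) and $g\ge F'$ by minimality, so $F'\le g\le F$. (B)~If $F$ carries a single face $(\ell,h)$ where $F'$ carries a \emph{finite} collection $\mathcal{G}$ of faces of total length $\ell$ and total height $h'\le h$ (other faces and initial value shared), the analogous rearrangement $g$ of $F'$'s faces --- with $\mathcal{G}$ placed consecutively and convexly in the slot of $(\ell,h)$ --- again has $g\le F$ (a convex arrangement of $\mathcal{G}$ lies below its chord, which lies below the segment of increment $h$) and $g\ge F'$, so $F\ge F'$.

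Assertion (a) then follows from (A): writing the flat face of $f_n^\da$ as $(\ti{l}_{n+1},0)$ and $(l_{n+1},0)$, the functions $f_{n+1}^\da$ and $f_n^\da$ share $\{(l_k,h_k):k\le n\}\cup\{(\ti{l}_{n+1},0)\}$ and differ only in that $f_{n+1}^\da$ carries $(l_{n+1},h_{n+1})$ where $f_n^\da$ carries $(l_{n+1},0)$, with $h_{n+1}\ge0$; hence $f_{n+1}^\da\ge f_n^\da$. For (d): $f_n^\ua(b)-f_n^\da(b)=c_n\ge0$ by comparing total heights, and $f_n^\ua\ge f_n^\da$ by (A) (they differ only in $(\ti{l}_n,0)$ versus $(\ti{l}_n,c_n)$). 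For $f_n^\ua\le f_n^\da+c_n$, let $g$ be the rearrangement of $f_n^\ua$'s faces that copies $f_n^\da$'s break-points with $(\ti{l}_n,c_n)$ in the slot of the flat face; then $g-f_n^\da$ vanishes before that slot, rises linearly to $c_n$ across it, and equals $c_n$ afterwards, so $g\le f_n^\da+c_n$, while $f_n^\ua\le g$ by minimality. Hence $0\le f_n^\ua-f_n^\da\le c_n$ with equality at $b$, which is (d).

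For (b) I would interpose the convex function $\hat f_{n+1}$ with initial value $f(a)$ and faces $\{(l_k,h_k):k\le n+1\}\cup\{(\ti{l}_{n+1},c_n-h_{n+1})\}$, where $c_n-h_{n+1}\ge0$ since $c_n\ge\sum_{k>n}h_k\ge h_{n+1}$. Then $\hat f_{n+1}\le f_n^\ua$ by (B), because $f_n^\ua$ carries the single face $(\ti{l}_n,c_n)$ where $\hat f_{n+1}$ carries the two faces $(l_{n+1},h_{n+1})$ and $(\ti{l}_{n+1},c_n-h_{n+1})$ of total length $\ti{l}_n$ and total height $c_n$; and $f_{n+1}^\ua\le\hat f_{n+1}$ by (A), since they share $\{(l_k,h_k):k\le n+1\}$ and differ only in $(\ti{l}_{n+1},c_{n+1})$ versus $(\ti{l}_{n+1},c_n-h_{n+1})$ with $c_{n+1}\le c_n-h_{n+1}$. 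Hence $f_{n+1}^\ua\le f_n^\ua$, i.e.\ (b). Finally (c): by (a) the sequence $(f_m^\da)_{m\ge n}$ is nondecreasing, by (A) each $f_m^\ua\ge f_m^\da$, and by (b) the sequence $(f_m^\ua)_{m\ge n}$ is nonincreasing; since $f_m^\da\to f$ uniformly, letting $m\to\infty$ gives $f_n^\da\le f\le f_n^\ua$.

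I expect (b) to be the main obstacle: it is the only part where the two hypotheses on $(c_n)_{n\in\N}$ --- the domination $c_n\ge\sum_{k>n}h_k$ and the recursion $c_{n+1}\le c_n-h_{n+1}$ --- must be used jointly, and a face-by-face comparison of $f_n^\ua$ with $f_{n+1}^\ua$ fails, so the auxiliary $\hat f_{n+1}$, which redistributes the terminal face of $f_n^\ua$ convexly, is essential. A secondary subtlety is that the family of faces of index $>n$ is infinite, so the inequalities in (c) that relate $f_n^\da,f_n^\ua$ to $f$ cannot be obtained in one face-replacement step --- which would require an infinite-face version of Lemma~\ref{lem:minimal-faces} --- and are instead reached through the monotone limit in $m$.
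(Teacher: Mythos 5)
Your overall strategy is genuinely different from the paper's and, in places, cleaner: you distill the two face-replacement monotonicity principles (A) and (B) once and for all and then deduce (a), (b) and (d) uniformly from them, whereas the paper repeats ad hoc variants of the same comparison for each part. Your proof of~(b) via the auxiliary function $\hat f_{n+1}$ --- splitting the terminal face $(\ti l_n,c_n)$ into $(l_{n+1},h_{n+1})$ and $(\ti l_{n+1},c_n-h_{n+1})$, then shrinking $c_n-h_{n+1}$ to $c_{n+1}$ --- is a tidy two-step version of the paper's one-step construction of $\psi$, and cleanly isolates where the hypothesis $c_{n+1}\le c_n-h_{n+1}$ enters. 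You are also right that the bounds $f_n^\da\le f\le f_n^\ua$ cannot be obtained by a single application of Lemma~\ref{lem:minimal-faces}, because the face collection of $f$ beyond index $n$ is infinite; both you and the paper must argue through a limit in $m$.

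The gap is in part~(c). Your sole input there is the statement $\|f-f_n^\da\|_\infty\le\sum_{k>n}h_k$, which you cite from Corollary~\ref{cor:Conv_app}. That citation is circular: in the paper, Corollary~\ref{cor:Conv_app} is proved \emph{from} Propositions~\ref{prop:Sandwich-CM} and~\ref{prop:General_UpLo} (its proof explicitly invokes both), and the companion fact $\|f-f_n^\da\|_\infty=\sum_{k>n}h_k$ in Remark~\ref{rem:propSandwich}(ii) is likewise derived from parts~(c) and~(d) of the proposition you are trying to prove. So as written, (c) is not established. What is needed is an independent proof that $f_n^\da\to f$ pointwise (pointwise suffices, and with (a) and Dini it upgrades to uniform). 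The paper obtains this, together with the lower inequality $f\ge f_n^\da$, by an elementary device you do not use: write $f$ via the ramp-function representation~\eqref{eq:pw_linear_rep}, truncate the series after $n$ terms to obtain $g_n$ with $g_n\le f\le g_n+\sum_{k>n}h_k$, observe $g_n$ has the same face multiset as $f_n^\da$ so $g_n\ge f_n^\da$ by Lemma~\ref{lem:minimal-faces}, and then pass to the limit via monotone convergence of the ramp terms. Substituting that direct argument for the corollary citation closes the gap and leaves the rest of your proof intact. One minor inaccuracy in your closing remark: (b) uses only the recursion $c_{n+1}\le c_n-h_{n+1}$ (the sign of $c_n-h_{n+1}$ is irrelevant to principles (A) and (B), which make no positivity assumption on heights), while the domination $c_n\ge\sum_{k>n}h_k$ is used only in (c); the two hypotheses need not act jointly in any single part.
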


\begin{rem}\label{rem:propSandwich}
(i) Note that if $c_n\to0$ as $n\to\infty$, Proposition~\ref{prop:Sandwich-CM} 
implies the sequences $(f_n^\da)_{n\in\N}$ and $(f_n^\ua)_{n\in\N}$ converge 
uniformly and monotonically to $f$. \\
(ii) Note that the lower bounds 
$f_n^\da$ do not depend on $c_n$ and satisfy 
$\|f-f_n^\da\|_\infty = \sum_{k=n+1}^\infty h_k$.
Indeed, setting $c_n= \sum_{k=n+1}^\infty h_k$
(for all $n\in\N$) and applying  Proposition~\ref{prop:Sandwich-CM}(c) \& (d) implies
 \[
\sum_{k=n+1}^\infty h_k = f(b)-f_n^\da(b) \leq \|f-f_n^\da\|_\infty 
\leq \|f_n^\ua-f_n^\da\|_\infty = \sum_{k=n+1}^\infty h_k.
\]
(iii) Given constants $c'_n$, $n\in\N$, satisfying $c'_n\geq\sum_{k=n+1}^\infty h_k$, 
one may construct constants $c_n$, satisfying $c_n\geq 
\sum_{k=n+1}^\infty h_k$ and $c_{n+1}\leq c_n-h_{n+1}$  for all $n\in\N$ as follows: 
set $c_1=c'_1\geq\sum_{k=2}^\infty h_k$ and 
$c_{n+1}=\min\{c'_{n+1},c_n-h_{n+1}\}$ for $n\in\N$. 
The condition $c_{n+1}\leq c_n-h_{n+1}$  is only necessary for part (b), but is assumed 
throughout Proposition~\ref{prop:Sandwich-CM} as it simplifies the proof of (c).\\
(iv) The function $f$ in Proposition~\ref{prop:Sandwich-CM} may have infinitely 
many faces in a neighbourhood of any point in $[a,b]$. If this occurs at $b$, the 
corresponding slopes may be arbitrarily large.\\ 
(v) Proposition~\ref{prop:Sandwich-CM} assumes that the slopes of the faces 
of $f$ are nonnegative. This condition can be relaxed to all the slopes being 
bounded from below by some constant $c\leq 0$, in which case we use the 
auxiliary faces $(\ti{l}_n,c\ti{l}_n)$ and $(\ti{l}_n,c\ti{l}_n+c_n)$ in 
the construction of $f_n^\ua$ and  $f_n^\da$.\\
(vi) If $n=0$ and 
$c_0\geq c_1+h_1$, then $\ti l_0=b-a$ and the functions 
$f_0^\da:t\mapsto f(a)$ and $f_0^\ua:t\mapsto f(a)+(t-a)c_0$, for $t\in[a,b]$, 
satisfy the conclusion of Proposition~\ref{prop:Sandwich-CM} (with $n=0$).
Moreover, Proposition~\ref{prop:Sandwich-CM} extends easily to the case 
when
$f$ has finitely many faces. 
\end{rem}

\begin{proof}
Note that the set $\mathcal{K}$ in Lemma~\ref{lem:minimal-faces} depends only on the value 
of the function $f$ at $a$ and the set of its faces. Define the set of functions
$\mathcal{K}_n^\ua$ (resp. $\mathcal{K}_n^\da$) by the set of 
faces  
$\{(l_k,h_k):k\in\mZ_1^{n+1}\}\cup\{(\ti{l}_n,c_n)\}$
(resp. 
$\{(l_k,h_k):k\in\mZ_1^{n+1}\}\cup\{(\ti{l}_n,0)\}$) and the starting value $f(a)$ as in Lemma~\ref{lem:minimal-faces}.
Let
$f_n^\ua$ (resp. $f_n^\da$) be the
unique convex function 
in $\mathcal{K}_n^\ua$ (resp. $\mathcal{K}_n^\da$) 
constructed in  Lemma~\ref{lem:minimal-faces}.

For each $n\in\N$, $f_{n+1}^\da$ and $f_n^\da$ share all but a single face, 
which has nonnegative slope in $f_{n+1}^\da$ and a horizontal slope in $f_n^\da$. 
Hence, replacing this face in $f_{n+1}^\da$ with a horizontal one yields a smaller 
(possibly non-convex) continuous piecewise linear function $\phi$. Applying  
Lemma~\ref{lem:minimal-faces} to $\phi$ produces a convex function $\phi_*$
satisfying $f_{n+1}^\da\geq \phi_*$ and  $\phi_*(a)=f(a)$
with faces equal to those of $f_n^\da$. Since $f_n^\da$ is also convex and
satisfies $f_n^\da(a)=f(a)$, we must have $\phi_*=f_n^\da$ implying the inequality in~(a).

To establish~(b), construct a function $\psi$ by replacing the face $(\ti{l}_n,c_n)$ 
in $f_n^\ua$ with the faces $(\ti{l}_{n+1},c_{n+1})$ and $(l_{n+1},h_{n+1})$ sorted 
by increasing slope (note $l_{n+1}+\ti{l}_{n+1}=\ti{l}_n$). More precisely, 
if $(a',a'+\ti l_n)\subset [a,b]$ is the interval corresponding to the face 
$(\ti{l}_n,c_n)$ in $f_n^\ua$, for $t\in[a,b]$ we set
\[
\varphi(t)= 
\begin{cases}
h_{n+1}\min\{(t-a')^+/l_{n+1},1\} + c_{n+1}\min\{(t-a'-l_{n+1})^+/\ti l_{n+1},1\}; 
& \frac{h_{n+1}}{l_{n+1}} \leq \frac{c_{n+1}}{\ti l_{n+1}}, \\
c_{n+1}\min\{(t-a')^+/\ti l_{n+1},1\} + h_{n+1}\min\{(t-a'-\ti l_{n+1})^+/l_{n+1},1\}; 
& \frac{h_{n+1}}{l_{n+1}} > \frac{c_{n+1}}{\ti l_{n+1}}.
\end{cases}
\]
By the inequality $c_n\geq c_{n+1}+h_{n+1}$, the graph of $\varphi$ on the 
interval $(a',a'+\ti l_n)$ is below the line segment $t\mapsto c_n(t-a')/\ti l_n$. 
We then define the continuous piecewise linear  function
\[
\psi(t)= 
\begin{cases}
f_n^\ua(t) &  t\in [a,a'],\\
f_n^\ua(a') + \varphi(t)  & t\in(a',a'+\ti l_n),\\
f_n^\ua(t)+h_{n+1}+c_{n+1}-c_n & t\in[a'+\ti l_n,b],
\end{cases}
\]
which clearly satisfies $f_n^\ua\geq \psi$.
Furthermore, the faces of $\psi$ coincide with those of $f_{n+1}^\ua$. Thus, applying 
Lemma~\ref{lem:minimal-faces} to $\psi$ yields $\psi\geq f_{n+1}^\ua$, implying~(b).

Recall that a face $(l_k,h_k)$ of $f$ satisfies 
$l_k=b_k-a_k$ and $h_k=f(b_k)-f(a_k)$ for any $k\in\N$, where 
$(a_k,b_k)\subset [a,b]$. Let $g_n$ be the piecewise linear function defined by 
truncating the series in~\eqref{eq:pw_linear_rep} at $n$: 
\begin{equation*}
g_n(t)=f(a)+\sum_{k=1}^n h_k\min\{(t-a_k)^+/l_k,1\},\qquad t\in[a,b].
\end{equation*}
By construction, $g_n+\ti{h}_n\geq f\geq g_n$ and $g_n(b)+\ti{h}_n=f(b)$, 
where $\ti{h}_n=\sum_{k=n+1}^\infty h_k$, implying $\|f-g_n\|_\infty=
\ti{h}_n$. Since the set $[a,b]\setminus\bigcup_{k=1}^n[a_k,b_k]$ consists of 
at most $n+1$ disjoint intervals, a representation of $g_n$ exists with at most 
$2n+1$ faces. Moreover the slopes of $g_n$ over all those intervals are equal to 
zero. Sorting the faces of $g_n$ by increasing slope yields $f_n^\da$. By 
Lemma~\ref{lem:minimal-faces}, the second inequality in (c), 
$f\geq g_n\geq f_n^\da$, holds.

We now establish the inequality $f_n^\ua\geq f$ for all $n\in\N$. First note that 
$f_n^\ua\geq f_n^\da$ for any $n\in\N$. Indeed, replacing the face 
$(\ti{l}_n,c_n)$ in $f_n^\ua$ with $(0,c_n)$ yields a smaller function with the 
same faces as $f_n^\da$. Hence the convexity of $f_n^\da$ and 
Lemma~\ref{lem:minimal-faces} imply the inequality $f_n^\ua\geq f_n^\da$. 
By~(b), $f_n^\ua\geq \lim_{k\to\infty} f_k^\da$.
Hence, part~(c) follows if we show that $\lim_{k\to\infty}f_k^\da=f$ pointwise.

For $k\in\N$ and $n\geq k$, define $a'_{k,n}$ and $a'_k$ by the formulae:
\[\begin{split}
a'_{k,n}&=a+\sum_{j\in\mZ_1^k} l_j \cdot \1_{\{h_j/l_j\}}(h_k/l_k)+
\sum_{j\in\mZ_1^{n+1}} l_j \cdot \1_{(h_j/l_j,\infty)}(h_k/l_k)+
\sum_{j\in\mZ_{n+1}^\infty} l_j, \\
a'_k&= a+\sum_{j\in\mZ_1^k} l_j \cdot \1_{\{h_j/l_j\}}(h_k/l_k)+
\sum_{j\in\mZ_1^\infty} l_j \cdot \1_{(h_j/l_j,\infty)}(h_k/l_k).
\end{split}\]
It is clear that $a'_{k,n}\searrow a'_k$ as $n\to\infty$.
Moreover, for $t\in[a,b]$, we have
\[
f_n^\da(t)=f(a)+\sum_{k=1}^n h_k\min\{(t-a'_{k,n})^+/l_k,1\},\quad
f(t)=f(a)+\sum_{k=1}^\infty h_k\min\{(t-a'_k)^+/l_k,1\}.
\]
In other words, for any $k\in\N$ and $k\geq n$, $a'_{k,n}$ (resp. $a'_k$) is 
the left endpoint of the interval corresponding to the face $(l_k,h_k)$ in a 
representation of $f_n^\ua$ (resp. $f$). Thus, for fixed $t\in[a,b]$, the terms 
$h_k\min\{(t-a'_{k,n})^+/l_k,1\}$ are a monotonically increasing sequence with 
limit $h_k\min\{(t-a'_k)^+/l_k,1\}$ as $n\to\infty$. By the monotone convergence 
theorem applied to the counting measure we deduce that $f_n^\da\to f$ pointwise, 
proving~(c).

Since $\|f_n^\ua-f_n^\da\|_\infty\geq f_n^\ua(b)-f_n^\da(b)=c_n$, claim in~(d) 
follows if  we prove the reverse inequality. Without loss of generality, the first 
face of $f_n^\da$ in the chronological order is $(\ti{l}_n,0)$. Replace this face 
with $(\ti{l}_n,c_n)$ to obtain a piecewise linear function $u_n(t)=
f(a)+c_nt/\ti{l}_n\1_{[0,\ti{l}_n]}(t)+(f_n^\da(t)+c_n)\1_{(\ti{l}_n,1]}(t)$. 
Since $u_n$ has the same faces as $f_n^\ua$, Lemma~\ref{lem:minimal-faces} 
implies $u_n\geq f_n^\ua$. Hence (d) follows from (c):
$\|f_n^\ua-f_n^\da\|_\infty\leq\|u_n-f_n^\da\|_\infty=c_n=f_n^\ua(b)-f_n^\da(b)$.
\end{proof}


Define 
$\tau_{[a,b]}(g)=\inf\{t\in[a,b]:\min\{g(t),g(t-)\}=\inf_{r\in[a,b]}g(r)\}$
for any \cadlag~function $g:[a,b]\to\R$. 
Consider now the problem of sandwiching a convex function $f$ with both positive and negative slopes.
Splitting $f$ into two convex functions, the pre-minimum $f^\la$ and post-minimum $f^\ra$ (see 
Proposition~\ref{prop:General_UpLo} for definition), it is natural to  apply Proposition~\ref{prop:Sandwich-CM} directly 
to each of them and attempt to construct the bounds for $f$ by concatenating the two sandwiches.
However, 
this strategy does not yield an upper and lower bounds for $f$ for the following reason:
since we may \textit{not} assume to have access to the minimal value $f(s)$ of the function $f$,   
the concatenated sandwich  cannot be anchored at $f(s)$
(note that we may and do assume that we know the time $s=\tau_{[a,b]}(f)$ of the minimum of $f$). 
Proposition~\ref{prop:General_UpLo} is the analogue of 
Proposition~\ref{prop:Sandwich-CM} for general piecewise linear convex functions.

\begin{prop}\label{prop:General_UpLo} 
Let $f$ be a piecewise linear convex function on $[a,b]$ with infinitely many faces 
of both signs. Set $s=\tau_{[a,b]}(f)$ and let $f^\la:t\mapsto f(s-t)-f(s)$ and 
$f^\ra:t\mapsto f(s+t)-f(s)$ be the pre- and post-minimum functions, defined on 
$[0,s-a]$ and $[0,b-s]$ with sets of faces 
$\{(l_n^\la,h_n^\la):n\in\N\}$ and $\{(l_n^\ra,h_n^\ra):n\in\N\}$ of nonnegative slope, 
respectively. 
Let the constants  
$c_n^\la$ 
and
$c_n^\ra$ be 
as in Proposition~\ref{prop:Sandwich-CM} for 
$f^\la$ and $f^\ra$, respectively. For any 
$n,m\in\N$, define the functions $f_{n,m}^\ua,f_{n,m}^\da:[a,b]\to\R$ by
\begin{equation}\label{eq:f_nm}\begin{split}
f_{n,m}^\ua(t) &=f(a)+[(f^\la)^\da_n((s-t)^+)-(f^\la)^\da_n(s-a)]
+ (f^\ra)^\ua_m((t-s)^+),\\
f_{n,m}^\da(t) &=f(a)+[(f^\la)^\da_n((s-t)^+)-(f^\la)^\da_n(s-a)-c_n^\la]
+ (f^\ra)^\da_m((t-s)^+).
\end{split}\end{equation}
For any $c\in\R$, let $T_c$ be the linear tilting defined in 
Subsection~\ref{subsec:e-SS_Levy} above. Set 
$s_c=\tau_{[a,b]}(T_cf)$ and $s_{n,m}=\tau_{[a,b]}(T_cf^\da_{n,m})$. 
Then the following statements hold for any $n,m\in\N$:\\
\nf{(a)} $T_cf_{n,m}^\ua\geq T_cf\geq T_cf_{n,m}^\da$;\\
\nf{(b)} $\|T_cf_{n,m}^\ua-T_cf_{n,m}^\da\|_\infty 
=f_{n,m}^\ua(b)-f_{n,m}^\da(b)=c_n^\la+c_m^\ra$;\\
\nf{(c)} $s_{n,m}\leq  s_c  \leq s_{n,m}+\ti{l}_n^\la$  (resp.
$s_{n,m}-\ti{l}_m^\ra\leq s_c \leq s_{n,m}$) if $c\geq 0$ (resp. $c<0$), where
we denote $\ti{l}_n^\la=\sum_{k=n+1}^\infty l_k^\la$ 
(resp. $\ti{l}_m^\ra=\sum_{k=m+1}^\infty l_k^\ra$);
\\
\nf{(d)} $T_c f_{n,m}^\ua\geq T_c f_{n,m+1}^\ua$ and $T_c f_{n,m}^\ua\geq T_c f_{n+1,m}^\ua$;\\
\nf{(e)} $T_c f_{n,m+1}^\da\geq T_c f_{n,m}^\da$ and $T_c f_{n+1,m}^\da\geq T_c f_{n,m}^\da$.
\end{prop}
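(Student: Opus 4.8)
The plan is to reduce everything to Proposition~\ref{prop:Sandwich-CM} applied separately to $f^\la$ and $f^\ra$, keeping careful track of how the missing anchor value $f(s)$ enters. First I would observe that, by the definition of $f^\la_{n,m}$ and $f^\ua_{n,m}$ in~\eqref{eq:f_nm}, on the interval $[a,s]$ the function $t\mapsto f_{n,m}^\bullet(t)-f(a)$ is an affine reparametrisation (via $t\mapsto s-t$, shifted so that the value at $t=a$ vanishes) of $(f^\la)^\da_n$ or its $c_n^\la$-shift, while on $[s,b]$ it is $(f^\ra)^\ua_m$ or $(f^\ra)^\da_m$. Since $f$ itself decomposes the same way, $f(t)-f(a)=[f^\la((s-t)^+)-f^\la(s-a)]+f^\ra((t-s)^+)$ for $t\in[a,b]$ (using $f^\la(s-a)=f(a)-f(s)$), part~(a) follows by adding the two inequalities from Proposition~\ref{prop:Sandwich-CM}(c): on $[a,s]$ one uses $f^\la\le (f^\la)^\ua_n$ but, crucially, the upper bound for $f$ on the pre-minimum side comes from \emph{subtracting} the lower bound $(f^\la)^\da_n$ evaluated at the endpoint and \emph{not} subtracting $c_n^\la$, while the lower bound for $f$ subtracts an extra $c_n^\la$; this is exactly the device that sidesteps not knowing $f(s)$, since both $f^\ua_{n,m}$ and $f^\da_{n,m}$ are pinned at $f(a)$ at the left endpoint and the pre-minimum bounds $(f^\la)^\da_n\le f^\la\le (f^\la)^\da_n+c_n^\la$ (Remark~\ref{rem:propSandwich}(ii), or directly Proposition~\ref{prop:Sandwich-CM}(c,d)) translate into the claimed two-sided enclosure. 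Tilting by $T_c$ adds the same linear function $ct$ to all three functions, so $T_cf^\ua_{n,m}\ge T_cf\ge T_cf^\da_{n,m}$ is immediate. Parts~(d) and~(e) follow the same way: by Proposition~\ref{prop:Sandwich-CM}(a,b), $(f^\ra)^\ua_m\ge (f^\ra)^\ua_{m+1}$ and $(f^\ra)^\da_{m+1}\ge (f^\ra)^\da_m$, and monotonicity in $n$ on the pre-minimum side is identical, and again $T_c$ preserves all inequalities.

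For part~(b), I would compute both functions at $t=b$. On $[a,s]$ both $f^\ua_{n,m}$ and $f^\da_{n,m}$ start at $f(a)$; at $t=s$ they equal $f(a)-[(f^\la)^\da_n(s-a)-(f^\la)^\da_n(0)]$ and $f(a)-[(f^\la)^\da_n(s-a)-(f^\la)^\da_n(0)-c_n^\la]=f(a)-[\cdots]+c_n^\la$, so at $t=s$ the upper minus lower is $c_n^\la$. On $[s,b]$ the increments are $(f^\ra)^\ua_m((t-s)^+)$ versus $(f^\ra)^\da_m((t-s)^+)$, whose difference at $t=b$ is $(f^\ra)^\ua_m(b-s)-(f^\ra)^\da_m(b-s)=c_m^\ra$ by Proposition~\ref{prop:Sandwich-CM}(d). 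Hence $f^\ua_{n,m}(b)-f^\da_{n,m}(b)=c_n^\la+c_m^\ra$. For the sup-norm identity I would argue that $t\mapsto f^\ua_{n,m}(t)-f^\da_{n,m}(t)$ is nondecreasing on $[a,b]$: it is constant equal to $0$ until the extra face $(\ti l_n^\la,0)$ vs.\ $(\ti l_n^\la,c_n^\la)$ of the pre-minimum sandwich is encountered, then it is nondecreasing (slopes of $(f^\la)^\da_n+c_n^\la$ dominate those of $(f^\la)^\da_n$ after reordering) up to $t=s$ where it equals $c_n^\la$, stays $\ge c_n^\la$ across $[s,b]$ and grows by exactly $c_m^\ra$; more cleanly, the same ``replace a face by a horizontal face'' comparison used in the proof of Proposition~\ref{prop:Sandwich-CM}(d) shows $f^\da_{n,m}\le f^\ua_{n,m}\le f^\da_{n,m}+(c_n^\la+c_m^\ra)$ pointwise, so the sup of the difference is attained at $b$ and equals $c_n^\la+c_m^\ra$. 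Tilting does not change differences of functions, so $\|T_cf^\ua_{n,m}-T_cf^\da_{n,m}\|_\infty=c_n^\la+c_m^\ra$.

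Part~(c) is where the real work is, and I expect it to be the main obstacle. The point is that $s_c=\tau_{[a,b]}(T_cf)$ need not coincide with $s_{n,m}=\tau_{[a,b]}(T_cf^\da_{n,m})$ because $f^\da_{n,m}$ replaces all pre-minimum faces beyond the $n$-th by a single horizontal face of length $\ti l_n^\la$, which shifts where the tilted minimum sits. The strategy is: by part~(a), $T_cf^\da_{n,m}\le T_cf$, and the two functions agree except on the ``uncertainty window'' where the horizontal replacement face of length $\ti l_n^\la$ lives (pre-minimum side) and the analogous $\ti l_m^\ra$-window on the post-minimum side. When $c\ge0$, the tilted minimum of a convex function is pushed toward $a$, and the dominant effect is on the pre-minimum side: the argmin of $T_cf^\da_{n,m}$ can differ from that of $T_cf$ by at most the length $\ti l_n^\la$ of the sole horizontal pre-minimum face, giving $s_{n,m}\le s_c\le s_{n,m}+\ti l_n^\la$; for $c<0$ the tilted minimum is pushed toward $b$ and the controlling window is the post-minimum $\ti l_m^\ra$-face, giving $s_{n,m}-\ti l_m^\ra\le s_c\le s_{n,m}$. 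I would make this precise by writing $\tau_{[a,b]}(T_cg)$ as the left endpoint of the (last) face of $g$ whose slope equals $-c$ when $-c$ is a slope, or the unique face whose slope interval contains $-c$ otherwise, then comparing the slope structures of $f$ and $f^\da_{n,m}$: they share the first $n$ (resp.\ $m$) faces on each side in sorted order, and the only discrepancy is that the pre-minimum tail $\{(l_k^\la,h_k^\la):k>n\}$ (all of nonnegative slope, hence all appearing \emph{after} time $s$ once everything is sorted chronologically across the whole interval... no — on the pre-minimum side time runs backward, so these tail faces of nonnegative slope in $f^\la$ correspond to the faces of $f$ just before $s$) is collapsed to one horizontal face. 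Chasing the chronological location of the face carrying slope $-c$ through this collapse, and bounding the displacement by the collapsed length, yields the two-sided bounds. The delicate point to get right is the sign bookkeeping between the forward time on $[a,b]$ and the reversed time on the pre-minimum interval, and the edge case where $-c$ is exactly a slope of $f$ (so the argmin is an interval), which is handled by the convention $\tau_{[a,b]}(g)=\inf\{\cdots\}$ taking the left endpoint.
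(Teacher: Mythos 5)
You've captured the right overall architecture: reduce to Proposition~\ref{prop:Sandwich-CM} on each side of the split at $s$, handle the unknown value $f(s)$ by anchoring at $f(a)$, and note that $T_c$ preserves inequalities and differences. But there is a concrete structural misreading. Your claim that ``both $f^\ua_{n,m}$ and $f^\da_{n,m}$ are pinned at $f(a)$ at the left endpoint'' is false: evaluating~\eqref{eq:f_nm} at $t=a$ gives $f^\ua_{n,m}(a)=f(a)$ but $f^\da_{n,m}(a)=f(a)-c_n^\la$. The paper flags exactly this in the remark after the proposition --- $f^\da_{n,m}$ cannot start at $f(a)$ because the slopes of $f$ near $a$ can be arbitrarily negative, so on $[a,s]$ it is defined as a \emph{vertical translate} of $f^\ua_{n,m}$ by $-c_n^\la$. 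This in turn makes your part~(b) sketch wrong in its details: there is no ``extra face $(\ti{l}_n^\la,0)$ vs.\ $(\ti{l}_n^\la,c_n^\la)$'' on the pre-minimum side --- both bounds use the \emph{same} function $(f^\la)^\da_n$ there --- and the difference $f^\ua_{n,m}-f^\da_{n,m}$ is constant equal to $c_n^\la$ on \emph{all} of $[a,s]$, not ``$0$ until the extra face is encountered.'' The conclusion $\|f^\ua_{n,m}-f^\da_{n,m}\|_\infty=c_n^\la+c_m^\ra$ is still right, since on $[s,b]$ the difference equals $c_n^\la+[(f^\ra)^\ua_m-(f^\ra)^\da_m]((t-s)^+)$, which lies in $[c_n^\la,\,c_n^\la+c_m^\ra]$ by Proposition~\ref{prop:Sandwich-CM}(c)\&(d), with equality at $t=b$.

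For part~(c) your ``uncertainty window'' heuristic points in the right direction but falls short of an argument. The paper's proof makes this mechanical by representing the argmin of a tilted convex piecewise linear function as the total length of faces with strictly negative tilted slope:
\begin{equation*}
s_c=\sum_{k\in\N} l_k^\la\,\1_{(-\infty,\,h_k^\la/l_k^\la)}(c)
+\sum_{k\in\N} l_k^\ra\,\1_{(-\infty,\,-h_k^\ra/l_k^\ra)}(c),
\end{equation*}
and analogously for $s_{n,m}$ with two extra terms $\ti{l}_n^\la\,\1_{(-\infty,0)}(c)$ and $\ti{l}_m^\ra\,\1_{(-\infty,0)}(c)$ for the inserted horizontal faces. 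For $c\geq0$ every $\ra$-term and both extra terms vanish, so $s_c-s_{n,m}=\sum_{k>n}l_k^\la\,\1_{(-\infty,\,h_k^\la/l_k^\la)}(c)\in[0,\ti{l}_n^\la]$; for $c<0$ all $\la$-indicators equal $1$ (since $h_k^\la/l_k^\la\geq0>c$), so the $\la$-contributions to $s_c$ and $s_{n,m}$ both equal $s-a$ and cancel, leaving $s_c-s_{n,m}=\sum_{k>m}l_k^\ra\,\1_{(-\infty,\,-h_k^\ra/l_k^\ra)}(c)-\ti{l}_m^\ra\in[-\ti{l}_m^\ra,0]$. This representation also disposes cleanly of the ``$-c$ is exactly a slope'' edge case you were worried about: the open intervals drop the face whose tilted slope is $0$, which matches the left-endpoint ($\inf$) convention of $\tau_{[a,b]}$. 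I would recommend adopting this formula instead of trying to chase the location of the critical face through the collapse.

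Finally, in parts~(d) and~(e) the reduction to Proposition~\ref{prop:Sandwich-CM}(a)\&(b) is correct, but ``monotonicity in $n$ on the pre-minimum side is identical'' glosses over a real point: you need $(f^\la)^\da_{n+1}-(f^\la)^\da_n$ to be \emph{maximised at the right endpoint} $s-a$, not merely nonnegative. This does hold (replacing one zero slope by $h_{n+1}^\la/l_{n+1}^\la\geq0$ and re-sorting makes the sorted slope function pointwise larger, so the difference of the primitives is nondecreasing), and it is what the paper is implicitly invoking from the proof of Proposition~\ref{prop:Sandwich-CM}(a); it deserves at least a sentence.
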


\begin{rem}
(i) The upper and lower bounds $f^\ua_{n,m}$ and $f^\da_{n,m}$, restricted to 
$[s,b]$, have the same ``derivative'' as the corresponding bounds in 
Proposition~\ref{prop:Sandwich-CM}. The behaviour of $f^\ua_{n,m}$ and $f^\da_{n,m}$  
on $[a,s]$ differs from that of the bounds in Proposition~\ref{prop:Sandwich-CM}.
Indeed, the lower bound $f^\da_{n,m}$ does not 
start with value $f(a)$ 
because the slopes of the faces of $f$ may become arbitrarily negative as $t$ approcheas $a$. 
Thus,
$f^\da_{n,m}$
is defined as a vertical translation of $f^\ua_{n,m}$ on $[a,s]$. \\
(ii) Note that all bounds in Proposition~\ref{prop:General_UpLo} hold uniformly in 
$c\in\R$, with the exception of part~(c) which depends on the sign of $c$.   
Proposition~\ref{prop:General_UpLo} extends easily to the case of a function $f$ 
without infinitely many faces of both signs. Moreover, if either $n=0$ or $m=0$, then 
as in Remark~\ref{rem:propSandwich}(vi) above, Proposition~\ref{prop:General_UpLo} still holds.
\end{rem}

\begin{proof}
Since $T_cg_1-T_cg_2=g_1-g_2$ for any functions $g_1,g_2:[a,b]\to\R$, it suffices 
to prove the claims (a), (b), (d) and (e)  for $c=0$. \\
(a) Let $\ti{h}^\la_n=\sum_{k=n+1}^\infty h^\la_k$, 
then Remark~\ref{rem:propSandwich}(ii) yields 
$\|f^\la-(f^\la)^\da_n\|_\infty\leq\ti{h}^\la_n$, so the inequality 
$c_n^\la\geq\ti{h}^\la_n$ implies 
$(f^\la)^\da_n +\ti{h}^\la_n\geq f^\la\geq (f^\la)^\da_n +\ti{h}^\la_n-c^\la_n$. 
Note that for $t\in[a,s]$ we have
$f(t) = f(a)+f^\la(s-t)-f^\la(s-a)$.
Moreover, since $\ti{h}^\la_n-f^\la(s-a) =- (f^\la)^\da_n(s-a)$,
by~\eqref{eq:f_nm} we obtain 
$(f^\la)_{n,m}^\ua(t)\geq f(t)\geq (f^\la)_{n,m}^\da(t)$ for $t\in[a,s]$. Similarly, 
since $(f^\ra)_m^\ua\geq f^\ra\geq (f^\ra)^\da_m$ by Proposition~\ref{prop:Sandwich-CM}, 
we deduce that the inequalities in (a) also hold on $[s,b]$.\\
(b) The equalities follow from the definition in~\eqref{eq:f_nm} and
Proposition~\ref{prop:Sandwich-CM}(d).\\
(c) Note that the minimum of $T_c f$ and $T_c f^\da_{n,m}$ 
	is attained after all the faces of negative slope. In 
terms of the functions $f$ and $f^\da_{n,m}$, 
the minimum takes place after all the faces with slopes less than 
$-c$. Put differently,
\begin{equation*}\begin{split}
s_c &
=\sum_{k=1}^\infty l_k^\la\cdot \1_{(-\infty,h_k^\la/l_k^\la)}(c)
+\sum_{k=1}^\infty l_k^\ra\cdot \1_{(-\infty,-h_k^\ra/l_k^\ra)}(c)\\
s_{n,m}&
=\sum_{k=1}^n l_k^\la\cdot \1_{(-\infty,h_k^\la/l_k^\la)}(c)+\ti{l}_n^\la\cdot \1_{(-\infty,0)}(c)
+\sum_{k=1}^m l_k^\ra\cdot \1_{(-\infty,-h_k^\ra/l_k^\ra)}(c)+\ti{l}_m^\ra\cdot \1_{(-\infty,0)}(c).
\end{split}\end{equation*}
If $c\geq0$, then all the terms coming from $f^\ra$ are $0$ and so is 
$\ti{l}_n^\la\cdot \1_{(-\infty,0)}(c)$, implying the first claim in~(c). A similar analysis for 
$c<0$ gives the corresponding claim.\\
(d) The result follows from the definition 
in~\eqref{eq:f_nm}
	and 
Proposition~\ref{prop:Sandwich-CM}(a)\&(b).\\
(e) The result follows from the definition in~\eqref{eq:f_nm}
	and Proposition~\ref{prop:Sandwich-CM}(a)\&(b)\&(d).
\end{proof}

\begin{cor}
\label{cor:Conv_app}
Let $f:[a,b]\to\R$ be a piecewise linear convex function with faces 
$\{(l_k,h_k):k\in\N\}$. Pick $n\in\N$ and let $g:[a,b]\to\R$ be the piecewise linear
convex function with faces 
$\{(l_k,h_k):k\in\mZ_1^{n+1}\}\cup\{(\ti{l}_n,0)\}$ (recall 
$\ti l_n=\sum_{m=n+1}^\infty l_m$), satisfying $g(a)=f(a)$.
Then the following inequality holds:
\begin{equation}
\label{eq:conv_app}
\|f-g\|_\infty\leq\max\bigg\{\sum_{k=n+1}^\infty h_k^-,\sum_{k=n+1}^\infty h_k^+\bigg\}.
\end{equation}
\end{cor}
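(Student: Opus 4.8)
The plan is to reduce this corollary to Proposition~\ref{prop:Sandwich-CM} by separating the positive and negative slope contributions of $f$, much as Proposition~\ref{prop:General_UpLo} does, but at the level of the \emph{bound} rather than the time of the minimum. First I would observe that the function $g$ in the statement is exactly the function $f_n$ defined in Subsection~\ref{subsec:approx_linear} just before the corollary (the convex function with the first $n$ faces and a horizontal face of length $\ti l_n$ appended). So the task is to prove the bound $\|f-f_n\|_\infty\le\max\{\sum_{k=n+1}^\infty h_k^-,\sum_{k=n+1}^\infty h_k^+\}$ that was asserted informally there.

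Next I would use the truncation function $g_n(t)=f(a)+\sum_{k=1}^n h_k\min\{(t-a_k)^+/l_k,1\}$ from the proof of Proposition~\ref{prop:Sandwich-CM}, which keeps the faces of $f$ in their original chronological positions and simply omits faces $n+1,n+2,\dots$. Writing $e_n=f-g_n=\sum_{k=n+1}^\infty h_k\min\{(t-a_k)^+/l_k,1\}$, each summand is a nondecreasing function of $t$ taking values between $0$ and $h_k$ (when $h_k\ge 0$) or between $h_k$ and $0$ (when $h_k<0$); hence for every $t\in[a,b]$ one has $-\sum_{k>n}h_k^-\le e_n(t)\le\sum_{k>n}h_k^+$, giving $\|f-g_n\|_\infty\le\max\{\sum_{k>n}h_k^-,\sum_{k>n}h_k^+\}$. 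The remaining point is that $g$ (i.e.\ $f_n$) is obtained from $g_n$ by sorting \emph{all} of $g_n$'s faces (the $n$ original faces of $f$ together with the at most $n+1$ horizontal gap-faces, whose total length is $\ti l_n$) into increasing slope order via Lemma~\ref{lem:minimal-faces}; so $g=f_n$ is the convex function in the class $\mathcal K$ generated by these faces.

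The crux is therefore to pass from the bound on $\|f-g_n\|_\infty$ to the same bound on $\|f-g\|_\infty$, i.e.\ to show that replacing $g_n$ by its convex rearrangement does not increase the distance to $f$. I would argue this by noting that the convex rearrangement $g=f_n$ can also be viewed as the convex rearrangement of $f$ itself \emph{after} each of the last infinitely many faces $(l_k,h_k)$, $k>n$, has been replaced by a horizontal face of the same length; call this modified (generally nonconvex) function $\phi$. On one hand, since replacing a face by a horizontal one of the same length decreases the function pointwise when $h_k\ge0$ and increases it when $h_k<0$, one gets the two-sided pointwise estimate $g_n-\sum_{k>n}h_k^-\le\phi\le g_n+\sum_{k>n}h_k^+$ combined with $f=g_n+e_n$; more cleanly, I would instead directly bound $\phi$ against $f$: since $\phi$ and $f$ share the first $n$ faces and differ only on the union of the remaining faces' intervals (where $\phi$ has slope $0$ and $f$ has slope $h_k/l_k$), the same monotone-in-$t$ argument gives $\|\phi-f\|_\infty\le\max\{\sum_{k>n}h_k^-,\sum_{k>n}h_k^+\}$. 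On the other hand, $g=f_n$ is the convex minorant-type rearrangement of $\phi$ produced by Lemma~\ref{lem:minimal-faces}, so $g\le\phi$ pointwise; and since $g$ is convex while $f$ is convex with $f\ge g_n$... here one must be a little careful, because $g\le\phi$ alone does not immediately yield $\|f-g\|\le\|f-\phi\|$. The honest route, and the one I expect to be the main obstacle, is to control $f-g$ from \emph{both} sides: the upper bound $f-g\le f-g_{\pi^*}^{(\text{neg part})}$ uses $g\ge$ (convex rearrangement keeping only nonneg.\ faces is a lower bound), while the lower bound $f-g\ge -\sum_{k>n}h_k^+$ follows because $g\le\phi\le f+\sum_{k>n}h_k^+$. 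Assembling these, $-\sum_{k>n}h_k^+\le f-g\le\sum_{k>n}h_k^-$ on a suitable decomposition, whence~\eqref{eq:conv_app}. The delicate part is organizing the signs so that the \emph{same} two tail sums $\sum_{k>n}h_k^{\pm}$ bound $f-g$ on all of $[a,b]$ simultaneously, which I would handle by splitting $[a,b]$ according to whether $f$'s local slope exceeds $0$ and invoking Lemma~\ref{lem:minimal-faces}'s characterization of $g$ on each piece.
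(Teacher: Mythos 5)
Your proposal starts in the right place: the function $g$ in the corollary is exactly the $f_n$ described in the paragraph preceding it, and the truncation $g_n(t)=f(a)+\sum_{k=1}^n h_k\min\{(t-a_k)^+/l_k,1\}$ (keeping the first $n$ faces in their chronological positions) satisfies the two-sided pointwise bound $-\sum_{k>n}h_k^-\le f-g_n\le\sum_{k>n}h_k^+$ by the term-by-term estimate you give. You also correctly invoke Lemma~\ref{lem:minimal-faces} to get $g\le g_n$, which combined with $g_n\le f+\sum_{k>n}h_k^-$ yields the (easy) lower bound $f-g\ge-\sum_{k>n}h_k^-$. However, your write-up has a sign error here: you claim $\phi=g_n\le f+\sum_{k>n}h_k^+$, whereas the truncation estimate actually gives $g_n\le f+\sum_{k>n}h_k^-$. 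The two-sided bound you ultimately assert, $-\sum h_k^+\le f-g\le\sum h_k^-$, is false; for instance, with $f$ on $[0,1]$ having faces $(1/2,-1)$ and $(1/2,2)$ and $n=1$, one finds $f(1)-g(1)=2$ while $\sum_{k>1}h_k^-=0$. The correct two-sided statement (which is what the paper proves) is $-\sum_{k>n}h_k^-\le f-g\le\sum_{k>n}h_k^+$.

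The genuine gap is the upper bound $f-g\le\sum_{k>n}h_k^+$. You recognise that $g\le g_n$ is unhelpful in that direction, but your sketch of the remedy is not executed: the object ``$g_{\pi^*}^{(\text{neg part})}$'' is never defined, and the closing sentence defers to ``splitting $[a,b]$ according to whether $f$'s local slope exceeds $0$ and invoking Lemma~\ref{lem:minimal-faces}'s characterization of $g$ on each piece'' without carrying it out. That split is exactly what Proposition~\ref{prop:General_UpLo} packages. The paper's proof is short precisely because it recognises $g$ as the lower sandwich function of Proposition~\ref{prop:General_UpLo} in disguise: with $m_1$ the number of negative-height faces among the first $n$ and $m_2=n-m_1$, and $c_{m_1}^\la=\sum_{k>n}h_k^-$, $c_{m_2}^\ra=\sum_{k>n}h_k^+$, one checks the identity $g(t)=f(a)+[(f^\la)^\da_{m_1}((s-t)^+)-(f^\la)^\da_{m_1}(s-a)]+(f^\ra)^\da_{m_2}((t-s)^+)$, so that $g=f_{m_1,m_2}^\da+c_{m_1}^\la$. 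Then Proposition~\ref{prop:General_UpLo}(a) gives $f\ge f_{m_1,m_2}^\da=g-c_{m_1}^\la$, while Proposition~\ref{prop:Sandwich-CM}(d) gives $f_{m_1,m_2}^\ua-g=(f^\ra)^\ua_{m_2}-(f^\ra)^\da_{m_2}\le c_{m_2}^\ra$, hence $f\le f_{m_1,m_2}^\ua\le g+c_{m_2}^\ra$. Your proposal has the right guiding intuition but neither the identification of $g$ with a function already controlled by Proposition~\ref{prop:General_UpLo} nor a self-contained substitute for the hard direction, so it does not constitute a proof as written.
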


\begin{proof}
Let 
$m_1=\sum_{k=1}^n \1_{(-\infty,-0)}(h_k)$ and $m_2=n-m_1$.
Define 
$c_{m_1}^\la=\sum_{k=n+1}^\infty h_k^-$
and 
$c_{m_2}^\ra=\sum_{k=n+1}^\infty h_k^+$.
Then, using the notation from Proposition~\ref{prop:General_UpLo}, 
the following holds 
$$g(t)=f(a)+[(f^\la)^\da_{m_1}((s-t)^+)-(f^\la)^\da_{m_1}(s-a)] +(f^\ra)^\da_{m_2}((t-s)^+) \quad\text{for any }t\in[a,b].$$
Moreover, by Propositions~\ref{prop:Sandwich-CM} and~\ref{prop:General_UpLo}, we have 
$g+c_{m_2}^\ra\geq f_{m_1,m_2}^\ua\geq f\geq f_{m_1,m_2}^\da=g-c_{m_1}^\la$ and~\eqref{eq:conv_app} follows. 
\end{proof}

\begin{rem} 
The proof of Corollary~\ref{cor:Conv_app} shows that using $g$ to construct lower and upper bounds on
$f$ yields poorer estimates than the ones in Proposition~\ref{prop:General_UpLo}.
Indeed, the upper (resp. lower) bound  in Proposition~\ref{prop:General_UpLo} is smaller than (resp. equal to) 
$g+c_{m_2}^\ra$ (resp. $g-c_{m_1}^\la$).
\end{rem}

\subsection{Convex minorant of stable meanders}

\begin{proof}[Proof of Theorem~\ref{thm:CMSM}]\label{proofThmCMSM}
(a) This is a consequence of Theorem~\ref{thm:meander-minorant}. 
Indeed, by the scaling property of the law of $\xi^\me_{1-n}$, each $S_n$ has the 
desired law. Moreover, $(\ell_m^\me)_{m\in\N}$ is a stick-breaking process based on 
$\Beta(1,\rho)$. By the definition of the stick-breaking process, the sequence 
$(U_n)_{n\in\mZ^0}$ has the required law.  
The independence structure is again implied by Theorem~\ref{thm:meander-minorant}, 
since the conditional law of $(S_n)_{n\in\mZ^0}$, given $(\ell_m^\me)_{m\in\N}$, 
no longer depends on the lengths of the sticks.\\
(b) The recursion~\eqref{eq:SMPerEq} follows form the definition in~\eqref{eq:MC}. 
Since $M_n$ is independent of $(U_n,S_n)$,the Markov property is a direct 
consequencel of~(a) and~\citep[Prop.~7.6]{MR1876169}. The stationarity of 
$((S_n,U_n))_{n\in\mZ^0}$ in (a) and the identity
\begin{equation*}
M_n=\sum_{m\in\mZ^n}\bigg(\prod_{k\in\mZ_{m+1}^n}(1-U_k)^{1/\alpha}\bigg)
U_m^{1/\alpha}S_{m},\quad n\in\mZ^1,
\end{equation*}
imply that $(M_n)_{n\in\mZ^1}$ is also stationary.\\
(c) The perpetuity follows from (b). It has a unique solution 
by~\citep[Thm~2.1.3]{MR3497380}.
\end{proof}

\begin{rem}
A result  analogous to Theorem~\ref{thm:CMSM} for stable processes 
and their convex minorants holds (see~\citep[Prop.~1]{ExactSim}).  In fact, the proof in~\citep{ExactSim}
implies a slightly stronger result, namely, a perpetuity for the triplet $(Z_T,\un{Z}_T,\tau_{[0,T]}(Z))$.
\end{rem}

The following result, which may be of independent interest, is a 
consequence of Theorem~\ref{thm:CMSM}. Parts of it were used to numerically test 
our algorithms in Subsection~\ref{subsec:numerics} above. 

\begin{cor}\label{cor:moments}
Consider some $S\sim\mS^+(\alpha,\rho)$.\\
{\normalfont(a)} If $\alpha>1$, then $\E Z^\me_1=
\frac{\Gamma(1/\alpha)\Gamma(1+\rho)}{\Gamma(\rho+1/\alpha)}\E S=
\frac{\Gamma(1/\alpha)\Gamma(1-1/\alpha)}{\Gamma(\rho+1/\alpha)\Gamma(1-\rho)}$.\\
{\normalfont(b)} For any $(\alpha,\rho)$ we have
$\E\big[(Z_1^\me)^{-\alpha\rho}\big]=
\Gamma(1+\rho)/\Gamma(1+\alpha\rho)$.\\
{\normalfont(c)} For $\gamma>0$ 
let $ k_{\alpha,\gamma} 
=\big((1+\gamma/\alpha)^{\min\{\gamma^{-1},1\}}-1\big)^{-\max\{\gamma,1\}}$.
Then we have
\[
\rho\E[S^\gamma]\leq\E[(Z_1^\me)^\gamma]
\frac{\Gamma(\rho+\gamma/\alpha)}{\Gamma(\rho)\Gamma(1+\gamma/\alpha)}
\leq\min\{\rho k_{\alpha,\gamma},1\}\E[S^\gamma].
\]
{\normalfont(d)} For $\gamma\in(0,\alpha\rho)$, we have
$\E\big[(Z_1^\me)^{-\gamma}\big]\leq\Gamma(1+\rho)\Gamma(1-\gamma/\alpha)
\E\big[S^{-\gamma}\big]/\Gamma(1+\rho-\gamma/\alpha)$.
\end{cor}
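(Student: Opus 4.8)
The plan is to exploit the stochastic perpetuity $M_0 = Z_1^\me \overset{d}{=} (1-U)^{1/\alpha}M_0' + U^{1/\alpha}S$ from Theorem~\ref{thm:CMSM}(c), where $M_0'$ is an independent copy of $Z_1^\me$, $U\sim\Beta(1,\rho)$, $S\sim\mS^+(\alpha,\rho)$, all independent, and to take moments of both sides. The key structural fact is that for $\gamma\in\R$ the perpetuity gives a relation between $\E[(Z_1^\me)^\gamma]$ and $\E[S^\gamma]$ whenever the relevant moments are finite, \emph{provided} one can handle the fact that the two summands on the right are not proportional (so one cannot simply read off the moment directly as in the pure multiplicative case). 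The approach for parts (a), (b), (d) is to find the exponent $\gamma$ for which the map $x\mapsto x^\gamma$ interacts linearly with the perpetuity; parts (a) and (b) turn out to be exactly solvable, while (d) (and the upper bound in (c)) requires a convexity/concavity estimate.

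For part (b), set $\gamma=-\alpha\rho$. Raising the perpetuity to this power does not linearise, so instead I would compute directly using the joint density of $(U,S)$ and the known Mellin transform of $\mS^+(\alpha,\rho)$. Concretely, I expect the cleanest route is: first establish by induction along the stationary Markov chain $(M_n)$ of Theorem~\ref{thm:CMSM}(b) that $\E[M_n^{-\alpha\rho}]$ is constant in $n$ (stationarity), then use $M_{n+1}=(1-U_n)^{1/\alpha}M_n+U_n^{1/\alpha}S_n$ together with the elementary identity that, for a $\Beta(1,\rho)$ variable $U$ and the specific exponent $-\alpha\rho$, the conditional expectation $\E[((1-U)^{1/\alpha}a+U^{1/\alpha}b)^{-\alpha\rho}]$ simplifies. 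The value $\Gamma(1+\rho)/\Gamma(1+\alpha\rho)$ strongly suggests that $\E[S^{-\alpha\rho}] = \Gamma(1+\alpha\rho)^{-1}\cdot(\text{something})$; I would pin this down from the Mellin transform of the positive stable law (e.g.\ from the normalisation in \cite[App.~A]{ExactSim}), and then the perpetuity collapses to the claimed closed form. An alternative and perhaps more robust derivation of (b): use Remark~\ref{rem:meander_gamma}(iv) — taking $T$ gamma distributed makes $\sum\delta_{(\ell_n^\me,\xi_n^\me)}$ a Poisson point process with intensity $e^{-t/\theta}t^{-1}dt\,\P(X_t\in dx)$ — to compute $\E[(Z_1^\me)^{-\alpha\rho}]$ after undoing the randomisation of $T$ via the scaling property.

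For part (a), set $\gamma=1$ (legitimate since $\alpha>1$ ensures $\E S<\infty$ by the tail behaviour of stable laws, hence $\E Z_1^\me<\infty$ by the moment bounds quoted from \cite{MR2599201}). Taking expectations in the perpetuity and using independence gives $\E Z_1^\me = \E[(1-U)^{1/\alpha}]\,\E Z_1^\me + \E[U^{1/\alpha}]\,\E S$. With $U\sim\Beta(1,\rho)$ one computes $\E[U^{1/\alpha}] = \rho\,B(1+1/\alpha,\rho) = \rho\,\Gamma(1+1/\alpha)\Gamma(\rho)/\Gamma(1+\rho+1/\alpha) = \Gamma(1+1/\alpha)\Gamma(1+\rho)/\Gamma(1+\rho+1/\alpha)$ and $\E[(1-U)^{1/\alpha}] = \rho B(1,\rho+1/\alpha) = \rho/(\rho+1/\alpha)$; hence $1-\E[(1-U)^{1/\alpha}] = (1/\alpha)/(\rho+1/\alpha) = \Gamma(1/\alpha)\Gamma(1+\rho)/(\alpha^{?})\cdots$. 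Solving the linear equation $\E Z_1^\me = \E[U^{1/\alpha}]\E S / (1-\E[(1-U)^{1/\alpha}])$ and simplifying the Gamma ratios yields $\E Z_1^\me = \frac{\Gamma(1/\alpha)\Gamma(1+\rho)}{\Gamma(\rho+1/\alpha)}\E S$; the second equality then follows by inserting the known formula $\E S = \Gamma(1-1/\alpha)/(\Gamma(1-\rho)\Gamma(\rho+1/\alpha)/\Gamma(1+\rho))\cdots$ — more precisely one substitutes $\E S$ in terms of $\E Z_1$ and $\P(Z_1>0)=\rho$, since $\mS^+(\alpha,\rho)$ is $\mS(\alpha,\rho)$ conditioned positive, giving $\E S = \rho^{-1}\E[Z_1^+]$, and $\E Z_1 = (\sin(\pi\rho)-\sin(\pi(1-\rho)))\Gamma(1-1/\alpha)/\pi$ as quoted in Subsection~\ref{subsec:num-hit}. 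Matching with the reflection formula $\Gamma(\rho)\Gamma(1-\rho)=\pi/\sin(\pi\rho)$ produces the stated expression.

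For part (d), and for the upper bound in (c), exact solvability fails, so I would argue by a sub/supermultiplicativity estimate. Fix $\gamma\in(0,\alpha\rho)$ so that $\E[(Z_1^\me)^{-\gamma}]<\infty$ (again by \cite{MR2599201}). The function $x\mapsto x^{-\gamma/\alpha}$ being convex on $(0,\infty)$, and using the perpetuity written as $Z_1^\me \overset{d}{=} U^{1/\alpha}\big((1-U)^{1/\alpha}U^{-1/\alpha}M_0' + S\big)$ is not quite the right split; instead I would use the bound $(a+b)^{-\gamma}\le \lambda^{-\gamma} a^{-\gamma}$ when... — actually the clean inequality is $(a+b)^{-\gamma} \le a^{-\gamma}$ for $a,b>0$, giving $\E[(Z_1^\me)^{-\gamma}] = \E[((1-U)^{1/\alpha}M_0' + U^{1/\alpha}S)^{-\gamma}] \le \E[((1-U)^{1/\alpha})^{-\gamma}]\,\E[(M_0')^{-\gamma}]$. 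This is circular unless $\E[(1-U)^{-\gamma/\alpha}]<1$, which holds precisely because $\gamma<\alpha\rho$: indeed $\E[(1-U)^{-\gamma/\alpha}] = \rho B(1,\rho-\gamma/\alpha) = \rho/(\rho-\gamma/\alpha) > 1$ — so the naive bound goes the wrong way. The correct move: bound instead using the other summand, $(a+b)^{-\gamma}\le b^{-\gamma}$, giving $\E[(Z_1^\me)^{-\gamma}] \le \E[(U^{1/\alpha}S)^{-\gamma}] = \E[U^{-\gamma/\alpha}]\,\E[S^{-\gamma}]$, and $\E[U^{-\gamma/\alpha}] = \rho B(1-\gamma/\alpha,\rho) = \rho\,\Gamma(1-\gamma/\alpha)\Gamma(\rho)/\Gamma(1+\rho-\gamma/\alpha) = \Gamma(1-\gamma/\alpha)\Gamma(1+\rho)/\Gamma(1+\rho-\gamma/\alpha)$, which is exactly the claimed constant. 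The main obstacle I anticipate is (d)'s finiteness bookkeeping: one must verify $\E[(Z_1^\me)^{-\gamma}]<\infty$ before taking expectations (citing \cite[Thm~1]{MR2599201}), and confirm the Beta integrals converge for $\gamma/\alpha<\rho$; the inequality $(a+b)^{-\gamma}\le b^{-\gamma}$ itself is trivial. For (c), the lower bound $\rho\E[S^\gamma]\le\cdots$ comes from $(a+b)^\gamma\ge b^\gamma$ (for $\gamma\ge1$) or $(a+b)^\gamma\ge \max\{a^\gamma,b^\gamma\}$ type bounds together with $\E[U^{\gamma/\alpha}]=\rho\Gamma(1+\gamma/\alpha)\Gamma(\rho)/\Gamma(\rho+\gamma/\alpha)$, giving the factor $\Gamma(\rho+\gamma/\alpha)/(\Gamma(\rho)\Gamma(1+\gamma/\alpha)) = \rho/\E[U^{\gamma/\alpha}]\cdot\rho^{-1}$... up to rearrangement; the upper bound in (c) uses the constant $k_{\alpha,\gamma}$ which I expect arises from the elementary inequality $(a+b)^\gamma \le k_{\alpha,\gamma}^{?}(a^\gamma+b^\gamma)$ combined with a geometric-series resummation of the stationary representation $M_0 = \sum_{m\in\mZ^1}\big(\prod_{k\in\mZ_{m+1}^1}(1-U_k)^{1/\alpha}\big)U_m^{1/\alpha}S_m$ from the proof of Theorem~\ref{thm:CMSM}(b) — this resummation, controlling $\gamma$-moments of an infinite sum of nonnegative terms via Minkowski (for $\gamma\ge1$) or the $c_r$-inequality (for $\gamma<1$), is the technically fussiest part and where the explicit form of $k_{\alpha,\gamma}$ with its $\min\{\gamma^{-1},1\}$ and $\max\{\gamma,1\}$ exponents is forced.
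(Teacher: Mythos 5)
Parts (a) and (d) of your argument are essentially correct and follow the paper's proof. In (a) you take expectations of both sides of the perpetuity of Theorem~\ref{thm:CMSM}(c) and solve the linear equation, exactly as the paper does (your sub-argument for the second equality, going through $\E[Z_1^+]$, conflates $\E Z_1$ with $\E Z_1^+$, but this is a peripheral slip). In (d) your inequality $(a+b)^{-\gamma}\le b^{-\gamma}$ applied to the $U^{1/\alpha}S$ term, together with $\E[U^{-\gamma/\alpha}]=\Gamma(1-\gamma/\alpha)\Gamma(1+\rho)/\Gamma(1+\rho-\gamma/\alpha)$ for $U\sim\Beta(1,\rho)$, is precisely the paper's argument. (The paper's display in (d) has the roles of $U$ and $1-U$ swapped relative to \eqref{eq:SMPerEq}; your version is the consistent one, and the resulting Gamma-ratio matches.)

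Parts (b) and (c) have genuine gaps. The missing idea in both is the distributional identity $V^{1/\alpha}Z_1^\me\overset{d}{=}\ov{Z}_1$ for an independent $V\sim\Beta(\rho,1-\rho)$, which is \cite[Cor.~VIII.4.17]{MR1406564}. For (b), you correctly observe that the perpetuity does not linearise at exponent $-\alpha\rho$, but you never actually produce a computation: the Poisson-point-process route via Remark~\ref{rem:meander_gamma}(iv) is invoked as a hope, not executed. The paper instead passes to $\ov{Z}_1$ via the identity above, applies Breiman's lemma to the product $V^{-1/\alpha}(Z_1^\me)^{-1}$ (since $V^{-1/\alpha}$ is regularly varying with index $-\alpha\rho$), and matches against the known tail asymptotics of $\ov{Z}_1^{-1}$ from \cite[Thm~3a]{MR0415780}. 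Without this device there is no evident way to isolate $\E[(Z_1^\me)^{-\alpha\rho}]$ from the perpetuity.

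For (c), the same identity reduces the claim to $\rho\E[S^\gamma]\leq\E[\ov{Z}_1^\gamma]\leq\min\{\rho k_{\alpha,\gamma},1\}\E[S^\gamma]$; the lower bound is then a one-line tail comparison $\P(\ov{Z}_1>x)\geq\P(Z_1>x)=\rho\P(S>x)$, the cap by $\E[S^\gamma]$ is a strong-Markov argument on the unconditioned stable process, and the $\rho k_{\alpha,\gamma}$ bound is imported from the supremum perpetuity of \cite[Eq.~(2.1)]{ExactSim} together with \cite[Lem.~2.3.1]{MR3497380}. Your direct attack on the meander perpetuity breaks down at the lower bound: $(a+b)^\gamma\ge b^\gamma$ gives $\E[(Z_1^\me)^\gamma]\geq\E[U^{\gamma/\alpha}]\E[S^\gamma]$, but for $U\sim\Beta(1,\rho)$ one has $\E[U^{\gamma/\alpha}]=\rho\Gamma(1+\gamma/\alpha)\Gamma(\rho)/\Gamma(1+\rho+\gamma/\alpha)$ (you wrote $\Gamma(\rho+\gamma/\alpha)$ in the denominator, which is incorrect), and after multiplying by $\Gamma(\rho+\gamma/\alpha)/(\Gamma(\rho)\Gamma(1+\gamma/\alpha))$ this yields $\rho\E[S^\gamma]/(\rho+\gamma/\alpha)$, which is strictly weaker than $\rho\E[S^\gamma]$ whenever $\rho+\gamma/\alpha>1$. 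The upper bound in your proposal is only sketched and the $\min\{\cdot,1\}$ cap, which in the paper comes from the strong-Markov comparison on $\ov{Z}_1$, does not appear to be derivable from the meander perpetuity alone.
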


\begin{proof}
(a) Recall that $\E[V^r]=\frac{\Gamma(\theta_1+r)\Gamma(\theta_1+\theta_2)}
{\Gamma(\theta_1+\theta_2+r)\Gamma(\theta_1)}$ for any $V\sim 
\Beta(\theta_1,\theta_2)$. Taking expectations in~\eqref{eq:SMPerEq} and solving 
for $\E Z_1^\me$ gives the formula.\\
(b) Let $V\sim \Beta(\rho,1-\rho)$ be independent of $Z^\me$ and denote the 
supremum of $Z$ by $\ov{Z}_1=\sup_{t\in[0,1]}Z_t$.
Then~\citep[Cor.~VIII.4.17]{MR1406564} implies that
$V^{1/\alpha}Z_1^\me\overset{d}{=}\ov{Z}_1$. By Breiman's 
lemma~\citep[Lem.~B.5.1]{MR3497380},
\[
1=\lim_{x\to\infty}\frac{\P\left(V^{-1/\alpha}(Z_1^\me)^{-1}>x\right)}
{\E\big[(Z_1^\me)^{-\alpha\rho}\big]\P\big(V^{-1/\alpha}>x\big)}
=\lim_{x\to\infty}\frac{\P(\ov{Z}_1^{-1}>x)\Gamma(1+\rho)\Gamma(1-\rho)}
{\E\big[(Z_1^\me)^{-\alpha\rho}\big]x^{-\alpha\rho}}.
\]
Since \citep[Thm~3a]{MR0415780} gives 
$1=\lim_{x\to\infty}\Gamma(1-\rho)\Gamma(1+\alpha\rho)
\P\big(\ov{Z}_{1}^{-1}>x\big)/x^{-\alpha\rho}$, we get (b).\\
(c) Note $\E\big[\big(Z_1^\me\big)^{\gamma}\big]
=\Gamma(\rho)\Gamma(1+\frac{\gamma}{\alpha})
\E\big[\ov{Z}_1^\gamma\big]/\Gamma(\rho+\frac{\gamma}{\alpha})$
for $\gamma>-\alpha\rho$ since $V$ and $Z_1^\me$ are independent and 
$V^{1/\alpha}Z_1^\me\overset{d}{=}\ov{Z}_1$. Hence, we need only prove that
\begin{equation}\label{eq:ineq_supermum}
\rho\E[S^\gamma]\leq\E[\ov{Z}_1^\gamma]
\leq\min\{\rho k_{\alpha,\rho},1\}\E[S^\gamma].
\end{equation}
Recall that for a nonnegative random variable $\vartheta$ we have
$\E[\vartheta^\gamma]=\int_0^\infty\gamma x^{\gamma-1}\P(\vartheta>x)dx$.
Since $\P(\ov{Z}_1>x)\geq\P(Z_1^+>x)=\rho\P(S>x)$, we get 
$\E[\ov{Z}_1^\gamma]\geq\rho\E[S^\gamma]$. Next, fix any $x>0$ 
and let $\sigma_x=\inf\{t>0:Z_t>x\}$. By the strong Markov property, the 
process $Z'$ given by $Z'_t=Z_{t+\sigma_x}-Z_{\sigma_x}$, $t>0$, has the same law 
as $Z$ and is independent of $\sigma_x$. Thus, we have
\[\begin{split}
\P(\ov{Z}_1>x) 
&=\P(Z_1>x) + \P(\ov{Z}_1>x,Z_1\leq x) 
\leq\P(Z_1>x) + \P(\sigma_x<1,Z'_{1-\sigma_x}\leq 0) \\
&=\P(Z_1>x) + (1-\rho)\P(\sigma_x<1) 
=\P(Z_1>x) + (1-\rho)\P(\ov{Z}_1>x),
\end{split}\] 
implying $\P(\ov{Z}_1>x)\leq\P(S>x)$. Hence the same argument 
gives $\E[S^\gamma]\geq\E[\ov{Z}_1^\gamma]$. Note
\begin{equation*}
k_{\alpha,\gamma} 
=\begin{cases}
\big((1+\gamma/\alpha)^{1/\gamma}-1\big)^{-\gamma}&\text{ if }\gamma>1\\
\alpha/\gamma & \text{ if }\gamma\leq 1.
\end{cases}
\end{equation*}
The last inequality $\E[\ov{Z}_1^\gamma]
\leq\rho k_{\alpha,\gamma}\E[S^\gamma]$ in~\eqref{eq:ineq_supermum} 
follows from the perpetuity for the law of 
$\ov{Z}_1$ in~\citep[Eq.~(2.1)]{ExactSim} and the inequality 
in the proof of~\citep[Lem.~2.3.1]{MR3497380}.\\
(d) Note that~\eqref{eq:SMPerEq} and the Mellin transform of $S$
(see~\citep[Sec.~5.6]{MR1745764}) imply
\[\begin{split}
\E\big[\big(Z_1^\me\big)^{-\gamma}\big] 
& = \E\big[\big(U^{1/\alpha}Z_1^\me
+(1-U)^{1/\alpha}S\big)^{-\gamma}\big]
\leq\E\big[\big(1-U\big)^{-\gamma/\alpha}\big]
\E\big[S^{-\gamma}\big]\\
& = \frac{\Gamma(1-\frac{\gamma}{\alpha})\Gamma(1+\rho)}
{\Gamma(1+\rho-\frac{\gamma}{\alpha})}\E\big[S^{-\gamma}\big]
=\frac{\Gamma(1+\rho)\Gamma(1-\gamma)}
{\Gamma(1+\rho-\frac{\gamma}{\alpha})}
\frac{\Gamma(1-\frac{\gamma}{\alpha})\Gamma(1+\frac{\gamma}{\alpha})}
{\Gamma(1-\gamma\rho)\Gamma(1+\gamma\rho)}<\infty.
\end{split}\]
\end{proof}

\begin{rem}
(i) Bernoulli's inequality implies $k_{\alpha,\gamma}\leq\alpha^\gamma$ 
for $\gamma>1$.\\
(ii) From $V^{1/\alpha}Z_1^\me\overset{d}{=}\ov{Z}_1$ we get 
$\E\ov{Z}_1=\alpha\rho\E S=\frac{\alpha\Gamma(1-1/\alpha)}
{\Gamma(\rho)\Gamma(1-\rho)}$ when $\alpha>1$. Similarly, for 
$\gamma\in(0,\alpha\rho)$, we have $\E\big[\ov{Z}_1^{-\gamma}\big]\leq
\rho(1+(1-\rho)/(\rho-\gamma/\alpha))\E\big[S^{-\gamma}\big]/(1-\gamma/\alpha)$
by the proof in~(d) applied to the perpetuity in~\citep[Thm~1]{ExactSim}.\\ 
(iii) Note that equation~\eqref{eq:SMPerEq} and the Grincevi\u{c}ius-Grey 
theorem~\citep[Thm~2.4.3]{MR3497380} give 
$\lim_{x\to\infty}\frac{1+\rho}{\rho}\P(U^{1/\alpha}S>x)/\P(Z_1^\me>x)=1$.
Next, Breiman's lemma~\citep[Lem.~B.5.1]{MR3497380} gives 
$\lim_{x\to\infty}(1+\rho)\P(U^{1/\alpha}S>x)/\P(S>x)=1$. 
Hence,~\citep[Sec.~4.3]{MR1745764} gives the assymptotic tail behaviour 
$\lim_{x\to\infty}\P(Z_1^\me>x)/x^{-\alpha}
=\Gamma(\alpha)\sin(\pi\alpha\rho)/(\pi\rho)$ (cf.~\citep{MR2599201}).
\end{rem}

\subsection{Computational complexity\label{subsec:complexity}}
The aim of the subsection is to analyse the computational complexity of the
$\varepsilon$SS algorithms from Section~\ref{sec:algorithms} and the exact simulation 
algorithm of the indicator of certain events (see Subsection~\ref{subsec:indicators} above). 
Each algorithm in Section~\ref{sec:algorithms} constructs an approximation of a random
element $\Lambda$ in a metric space $(\mathbb{X},d)$, given by a sequence 
$(\Lambda_n)_{n\in\N}$ in $(\mathbb{X},d)$ and upper bounds $(\Delta_n)_{n\in\N}$ 
satisfying $\Delta_n\geq d(\Lambda,\Lambda_n)$ for all $n\in\N$. The $\varepsilon$SS 
algorithm terminates as soon as $\Delta_n<\varepsilon$. Moreover, the computational 
complexity of constructing the finite sequences $\Lambda_1,\ldots,\Lambda_n$ and 
$\Delta_1,\ldots,\Delta_n$ is linear in $n$ for the algorithms in Section~\ref{sec:algorithms}. 
For $\varepsilon>0$, the runtime of the $\varepsilon$SS algorithm is thus 
proportional to $N^\Lambda(\varepsilon)=\inf\{n\in\N:\Delta_n<\varepsilon\}$ since 
the element $\Lambda_{N^\Lambda(\varepsilon)}$ is the output of the $\varepsilon$SS. 
Proposition~\ref{prop:MainRuntime} below shows that for all the algorithms in 
Section~\ref{sec:algorithms}, we have $\Delta_n\to0$ a.s. as $n\to\infty$, implying 
$N^\Lambda(\varepsilon)<\infty$ a.s. for $\varepsilon>0$.

The exact simulation algorithm of an indicator $\1_A(\Lambda)$, for some subset $A\subset\mathbb{X}$ 
satisfying $\P(\Lambda\in\partial A)=0$, has a complexity proportional to 
\begin{equation}\label{eq:indicators_stop_time}
B^\Lambda(A)=\inf\{n\in\N:\Delta_n<d(\Lambda_n,\partial A)\},
\end{equation}
since $\1_A(\Lambda)=\1_A(\Lambda_{B^\Lambda(A)})$ a.s.
Indeed, if $d(\Lambda,\Lambda_n)\leq\Delta_n<d(\Lambda_n,\partial A)$ then 
$\1_A(\Lambda)=\1_A(\Lambda_n)$. 
Moreover, $B^\Lambda(A)<\infty$ a.s. since 
$d(\Lambda_n,\partial A)\to d(\Lambda,\partial A)>0$ and $\Delta_n\to0$ a.s. The next 
lemma provides a simple connection between the tail probabilities of the complexities 
$N^\Lambda(\varepsilon)$ and $B^\Lambda(A)$. It will play a key role in the 
proof of Theorem~\ref{thm:all_complexities}.

\begin{lem}\label{lem:indicators_time} 
Let $A\subset\mathbb{X}$ satisfy $\P(\Lambda\in\partial A)=0$. Assume that 
some positive constants $r_1$, $r_2$, $K_1$ and $K_2$ and a nonincreasing 
function $q:\N\to[0,1]$ with $\lim_{n\to\infty}q(n)=0$ satisfy
\begin{equation}\label{eq:indicators_decay_asm}
\P(d(\Lambda,\partial A)<\varepsilon) \leq K_1\varepsilon^{r_1},\quad
\P(N^\Lambda(\varepsilon)>n) \leq K_2\varepsilon^{-r_2}q(n),
\end{equation}
for all $\varepsilon\in(0,1]$ and $n\in\N$. Then, for all $n\in\N$, we have
\begin{equation}\label{eq:indicators_decay}
\P(B^\Lambda(A)>n)\leq (K_1 + 2^{r_2}K_2)q(n)^{r_1/(r_1+r_2)}.
\end{equation}
\end{lem}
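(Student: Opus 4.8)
The plan is to balance the two events that make $B^\Lambda(A)>n$ possible. The key observation is that $B^\Lambda(A)=\inf\{n:\Delta_n<d(\Lambda_n,\partial A)\}$, and since $\Delta_n\geq d(\Lambda,\Lambda_n)$, whenever $\Delta_n<d(\Lambda_n,\partial A)$ we have in particular $2\Delta_n\geq\Delta_n+d(\Lambda,\Lambda_n)>d(\Lambda_n,\partial A)-d(\Lambda,\Lambda_n)+d(\Lambda,\Lambda_n) \ge d(\Lambda,\partial A)-d(\Lambda,\Lambda_n)$ — more to the point, $d(\Lambda,\partial A)\le d(\Lambda,\Lambda_n)+d(\Lambda_n,\partial A)\le \Delta_n+d(\Lambda_n,\partial A)$. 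So if additionally $\Delta_n<\varepsilon$ and $d(\Lambda_n,\partial A)\le \Delta_n<\varepsilon$, then $d(\Lambda,\partial A)<2\varepsilon$. Contrapositively, on the event $\{d(\Lambda,\partial A)\ge 2\varepsilon\}\cap\{N^\Lambda(\varepsilon)\le n\}$ we have $\Delta_n\le\Delta_{N^\Lambda(\varepsilon)}<\varepsilon$ (using that $\Delta_n$ is nonincreasing in $n$, which holds for all the algorithms here) and $d(\Lambda_n,\partial A)\ge d(\Lambda,\partial A)-d(\Lambda,\Lambda_n)\ge 2\varepsilon-\Delta_n>\varepsilon>\Delta_n$, so $B^\Lambda(A)\le n$.

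Thus, for any $\varepsilon\in(0,1]$ and $n\in\N$,
\[
\{B^\Lambda(A)>n\}\subset\{d(\Lambda,\partial A)<2\varepsilon\}\cup\{N^\Lambda(\varepsilon)>n\},
\]
and hence, by~\eqref{eq:indicators_decay_asm},
\[
\P(B^\Lambda(A)>n)\le K_1(2\varepsilon)^{r_1}+K_2\varepsilon^{-r_2}q(n)
\le 2^{r_1}K_1\varepsilon^{r_1}+K_2\varepsilon^{-r_2}q(n).
\]
Since this holds for every $\varepsilon\in(0,1]$, I would now optimise over $\varepsilon$. Treating $\varepsilon\mapsto a\varepsilon^{r_1}+b\varepsilon^{-r_2}$ with $a=2^{r_1}K_1$ and $b=K_2q(n)$, the unconstrained minimiser is $\varepsilon_*=(r_2 b/(r_1 a))^{1/(r_1+r_2)}$, which is a constant multiple of $q(n)^{1/(r_1+r_2)}$; since $q(n)\le 1$ and the constant can be absorbed, one can (after checking $\varepsilon_*\le 1$, or else simply taking $\varepsilon=q(n)^{1/(r_1+r_2)}\le 1$ directly) substitute $\varepsilon=q(n)^{1/(r_1+r_2)}$ to get both terms proportional to $q(n)^{r_1/(r_1+r_2)}$. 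A short computation then yields the constant $K_1+2^{r_2}K_2$ as stated (tracking the $2^{r_1}$ and the exponents carefully; the clean way is to note $\varepsilon^{r_1}=q(n)^{r_1/(r_1+r_2)}$ and $\varepsilon^{-r_2}q(n)=q(n)^{1-r_2/(r_1+r_2)}=q(n)^{r_1/(r_1+r_2)}$, so the bound is $(2^{r_1}K_1+K_2)q(n)^{r_1/(r_1+r_2)}$; the stated form with $2^{r_2}K_2$ follows by instead choosing $\varepsilon=\tfrac12 q(n)^{1/(r_1+r_2)}$, turning the $2^{r_1}$ into a factor absorbed into $K_1$ and producing $2^{r_2}$ in front of $K_2$).

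The main obstacle, such as it is, is purely bookkeeping: getting the constant to come out exactly as $K_1+2^{r_2}K_2$ requires the right choice of $\varepsilon$ (namely $\varepsilon=\tfrac12 q(n)^{1/(r_1+r_2)}$, so that $(2\varepsilon)^{r_1}=q(n)^{r_1/(r_1+r_2)}$ and $\varepsilon^{-r_2}=2^{r_2}q(n)^{-r_2/(r_1+r_2)}$), together with verifying $\varepsilon\le 1$ — which holds since $q(n)\le 1$. No deep input is needed beyond the set inclusion above and the monotonicity of $(\Delta_n)$; the substantive probabilistic content is entirely contained in the hypotheses~\eqref{eq:indicators_decay_asm}, which are supplied by Proposition~\ref{prop:MainRuntime} and the regularity of $\Lambda$ near $\partial A$.
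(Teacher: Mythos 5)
Your proof is correct and matches the paper's own argument: the same decomposition $\{B^\Lambda(A)>n\}\subset\{d(\Lambda,\partial A)<2\varepsilon\}\cup\{N^\Lambda(\varepsilon)>n\}$, followed by the same choice $\varepsilon=\tfrac12 q(n)^{1/(r_1+r_2)}$, yielding the constant $K_1+2^{r_2}K_2$. One minor note: you invoke monotonicity of $(\Delta_n)$, which the paper neither assumes nor needs — on $\{d(\Lambda,\partial A)\geq 2\varepsilon\}\cap\{N^\Lambda(\varepsilon)\leq n\}$, argue at time $m=N^\Lambda(\varepsilon)\leq n$ rather than at time $n$: then $\Delta_m<\varepsilon$ holds by definition of $N^\Lambda(\varepsilon)$, giving $d(\Lambda_m,\partial A)\geq 2\varepsilon-\Delta_m>\varepsilon>\Delta_m$ and hence $B^\Lambda(A)\leq m\leq n$ from the infimum definition of $B^\Lambda(A)$, with no monotonicity hypothesis.
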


\begin{proof}
Note that $\{d(\Lambda,\partial A)\geq 2\varepsilon\}\subset
\{N^\Lambda(\varepsilon)\geq B^\Lambda(A)\}$ for any $\varepsilon>0$ since 
$d(\Lambda,\partial A)\geq 2\varepsilon$ and $d(\Lambda,\Lambda_n)\leq 
\Delta_n<\varepsilon$ imply $\Delta_n < \varepsilon < d(\Lambda_n,\partial A)$. 
Thus, if we define, for each $n\in\N$, $\varepsilon_n = 
q(n)^{1/(r_1+r_2)}/2\in(0,1/2)$, we get
\begin{equation*}\begin{split}
\P(B^\Lambda(A)>n) 
& = \P(B^\Lambda(A)>n,d(\Lambda,\partial A)<2\varepsilon_n)
+ \P(B^\Lambda(A)>n,d(\Lambda,\partial A)\geq 2\varepsilon_n)\\
& \leq \P(d(\Lambda,\partial A)<2\varepsilon_n)
+ \P(N^\Lambda(\varepsilon_n)>n)\\
& \leq 2^{r_1}K_1\varepsilon_n^{r_1} + K_2\varepsilon_n^{-r_2}q(n) 
= (K_1 + 2^{r_2}K_2)q(n)^{r_1/(r_1+r_2)}.
\end{split}\end{equation*}
\end{proof}

\subsubsection{Complexities of Algorithm~\ref{alg:eps-CM-SM} and the exact simulation algorithm of the indicator $\1_{\{Z^\me_1>x\}}$}

Recall the definition $c_{-n}=L_n^{1/\alpha}D_n$, $n\in\mZ^0$,
in~\eqref{eq:c_n}. The dominating process  $(D_n)_{n\in\mZ^1}$, 
defined in Appendix~\ref{sec:notationSupSim} (see~\eqref{eq:DominatingProcess} below),
is inspired by the one in~\citep{ExactSim}. 
In fact, the sampling of the process 
$(D_n)_{n\in\mZ^1}$
is achieved by using~\citep[Alg.~2]{ExactSim}
as explained in the appendix.
The computational complexities of Algorithms~\ref{alg:fd_meander} and~\ref{alg:eps-CM-SM}   
are completely determined by 
\begin{equation}\label{eq:N_epsilon}
N(\varepsilon)=\sup\{n\in\mZ^0:L_n^{1/\alpha}D_n<\varepsilon\},\quad\varepsilon>0.
\end{equation}
It is thus 
our aim to develop bounds on the tail probabilities of $N(\varepsilon)$, 
which requires the analysis of the sampling algorithm in~\citep[Alg.~2]{ExactSim}.
We start by proving that the error bounds $(c_m)_{m\in\N}$ are strictly decreasing.

\begin{lem}\label{lem:eps-strong}
The sequence  $(c_m)_{m\in\N}$, given by $c_m=L_{-m}^{1/\alpha}D_{-m}>0$, 
is strictly decreasing: $c_m>c_{m+1}$ a.s. for all $m\in\N$. 
\end{lem}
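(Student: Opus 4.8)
The plan is to reduce the claimed strict monotonicity of $(c_m)_{m\in\N}$ to a one‑step strict inequality for the dominating process $(D_n)_{n\in\mZ^1}$, and then to read off the latter from the construction of $(D_n)$ recalled in Appendix~\ref{sec:notationSupSim}.

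First I would record the elementary recursion for $(L_n)$. Since $\mZ^{n+1}=\mZ^n\cup\{n\}$, the definitions $L_{n+1}=\sum_{m\in\mZ^{n+1}}\ell^\me_{-m}$ and $U_n=\ell^\me_{-n}/L_{n+1}$ give $L_{n+1}=L_n+\ell^\me_{-n}=L_n+U_nL_{n+1}$, hence $L_n=(1-U_n)L_{n+1}$ for every $n\in\mZ^0$. Specialising to $n=-m-1$ yields $L_{-m-1}=(1-U_{-m-1})L_{-m}$, so that
\[
c_m-c_{m+1}
=L_{-m}^{1/\alpha}D_{-m}-\bigl((1-U_{-m-1})L_{-m}\bigr)^{1/\alpha}D_{-m-1}
=L_{-m}^{1/\alpha}\bigl(D_{-m}-(1-U_{-m-1})^{1/\alpha}D_{-m-1}\bigr).
\]
As $L_{-m}=\sum_{j\ge m+1}\ell^\me_j\in(0,1)$ a.s., the claim $c_m>c_{m+1}$ a.s.\ is equivalent to the one‑step inequality $D_{-m}>(1-U_{-m-1})^{1/\alpha}D_{-m-1}$ a.s.

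It then remains to establish this, and here I would appeal to the explicit description of $(D_n)$ in~\eqref{eq:DominatingProcess} together with Theorem~\ref{thm:CMSM}(a): the innovations $(S_n)$ are supported on $(0,\infty)$ and $U_n\in(0,1)$, so $U_n^{1/\alpha}S_n>0$ a.s. Along each backward step the dominating process is built so that $D_{-m}$ exceeds $(1-U_{-m-1})^{1/\alpha}D_{-m-1}$ by a quantity bounded below by a positive multiple of $U_{-m-1}^{1/\alpha}S_{-m-1}$, which gives the required strict inequality (and, in fact, shows that the $(c_m)$ satisfy the monotonicity hypothesis $c_{m+1}\le c_m-\xi^\me_{m+1}$ of Proposition~\ref{prop:Sandwich-CM}, since $\xi^\me_{m+1}=(\ell^\me_{m+1})^{1/\alpha}S_{-m}>0$ a.s.). The step requiring care is the behaviour at the special ``block'' steps of the dominating chain of~\citep{ExactSim}, where the update is not the affine map $x\mapsto(1-U_{-m-1})^{1/\alpha}x+U_{-m-1}^{1/\alpha}S_{-m-1}$ but additionally enlarges the state; since such enlargements only increase $D$, the lower bound on the increment is unaffected and the argument goes through. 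The remaining facts ($c_m>0$, i.e.\ $D_{-m}>0$, and measurability) are immediate from the construction.
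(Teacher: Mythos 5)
Your reduction is sound and in fact equivalent to the paper's: writing $L_{-m-1}=(1-U_{-m-1})L_{-m}$, the claim $c_m>c_{m+1}$ is the same as $D_{-m}>(1-U_{-m-1})^{1/\alpha}D_{-m-1}$, which is exactly what the paper proves (in the guise $E_{n-1}<E_n$ after factoring out $e^{\sup_{k\in\mZ^{n+1}}W_k}$).

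The gap is in the step where you discharge this one-step inequality. You assert that ``the dominating process is built so that $D_{-m}$ exceeds $(1-U_{-m-1})^{1/\alpha}D_{-m-1}$ by a positive multiple of $U_{-m-1}^{1/\alpha}S_{-m-1}$,'' appealing to the explicit formula~\eqref{eq:DominatingProcess}, but this is precisely the content of the lemma and it is not visible from the formula without work. The difficulty is that $D_n$ does not evolve by the update map $\phi(x,\theta)=(1-u)^{1/\alpha}x+u^{1/\alpha}s$: both the reflected-walk factor $e^{R_n}$ and the starting index $\chi_n$ of the truncated sum change with $n$, and when $\chi_{n-1}<\chi_n$ the sum in $D_{n-1}$ contains extra terms over $k\in\mZ_{\chi_{n-1}}^{\chi_n}$ that the sum in $D_n$ does not. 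To conclude one must estimate those extra terms using $S_k\le e^{\delta(n-k-1)}$ for $k<\chi_n$ and show they are absorbed by the change in the geometric cushion $e^{(\delta-d)(n-\chi_n)}/(1-e^{\delta-d})$; this is the computation the paper performs, and the strictness ultimately comes from that cushion (the term $e^{(d-\delta)\chi_n+n\delta}e^{(d-\delta)(\chi_{n-1}-\chi_n)}(e^{-\delta}-1)/(1-e^{\delta-d})<0$), not from the innovation $U_{-m-1}^{1/\alpha}S_{-m-1}$, whose contribution is actually discarded in the paper's bound. Your mention of ``block steps'' whose ``enlargements only increase $D$'' is a heuristic, not an argument: the enlargements change both endpoints of the comparison, so one cannot conclude monotonicity of the increment without the explicit estimate. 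Unless you supply that estimate (or an explicit citation of a pathwise domination $D_n\ge\phi(D_{n-1},\Theta_{n-1})$ proved elsewhere), the proof is incomplete.
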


\begin{proof}
Fix $n\in\mZ^1$ and note that by~\eqref{eq:DominatingProcess}
\begin{equation*}\begin{split}
L_n^{1/\alpha}D_n & =  e^{\sup_{k\in\mZ^{n+1}}W_{k}}E_{n},\quad\text{where}\\
E_n & =  \frac{e^{(d-\delta)\chi_{n}+n\delta}}{1-e^{\delta-d}}+
\sum_{k\in\mZ_{\chi_n}^n}e^{(k+1)d}U_{k}^{1/\alpha}S_k.
\end{split}\end{equation*}
The random walk $(W_k)_{k\in\mZ^1}$, the random variables $\chi_n$, $n\in\mZ^1$, and the constants $d,\delta$ are given in Appendix~\ref{sec:notationSupSim} below.
The pairs $(U_k,S_k)$ are given in Theorem~\ref{thm:CMSM} (see also Algorithm~\ref{alg:eps-CM-SM}).
Since $\sup_{k\in\mZ^{n}}W_k\leq \sup_{k\in\mZ^{n+1}}W_k$ for all $n\in\mZ^1$, it suffices
to show that $E_{n-1}<E_n$. From the definition in~\eqref{eq:chi} of $\chi_n$ it follows that
\[\sum_{k\in\mZ_{\chi_{n-1}}^{\chi_n}}e^{(k+1)d}U_k^{1/\alpha}S_k
\leq\sum_{k\in\mZ_{\chi_{n-1}}^{\chi_n}}e^{(k+1)d}e^{\delta(n-k-1)}
=\frac{e^{(d-\delta)\chi_n+n\delta}\big(1-e^{(d-\delta)(\chi_{n-1}-\chi_n)}\big)}
{1-e^{\delta-d}}.\]
The inequality in display then yields
\begin{equation*}\begin{split}
E_{n-1}-E_{n}
& = \frac{e^{(d-\delta)\chi_{n-1}+(n-1)\delta}
	-e^{(d-\delta)\chi_n+n\delta}}{1-e^{\delta-d}}
-e^{nd}U_{n-1}^{1/\alpha}S_{n-1}
+\sum_{k\in\mZ_{\chi_{n-1}}^{\chi_n}}e^{(k+1)d}U_k^{1/\alpha}S_k\\
&\leq e^{(d-\delta)\chi_n+n\delta}
\big(e^{(d-\delta)(\chi_{n-1}-\chi_n)-\delta}-1
+1-e^{(d-\delta)(\chi_{n-1}-\chi_n)}\big)/(1-e^{\delta-d})\\
& =  e^{(d-\delta)\chi_n+n\delta}e^{(d-\delta)(\chi_{n-1}-\chi_n)}
\big(e^{-\delta}-1\big)/(1-e^{\delta-d})<0,
\end{split}\end{equation*}
implying $E_{n-1}<E_n$ and concluding the proof.
\end{proof}


We now analyse the 
tail of $N(\varepsilon)$ defined in~\eqref{eq:N_epsilon}.

\begin{prop}\label{prop:MainRuntime}
Pick $\varepsilon\in(0,1)$ and let the constants 
$d,\delta,\gamma$ and $\eta$ be as in Appendix~\ref{sec:notationSupSim}.
Define the constants $r=(1-e^{\delta-d})/2>0$, $m^\ast=\lfloor\frac{1}{\delta\gamma}\log\E[S^\gamma]\rfloor+1$ 
(here $S\sim\mS^+(\alpha,\rho)$) and  
\begin{equation}\label{eq:K}
K=e^{d\eta}+e^{\delta\gamma}(e^{d\eta}-1)\max\left\{\frac{\E[S^\gamma]}
{(1-e^{-\delta\gamma})(1-e^{-\delta\gamma m^\ast}\E[S^\gamma])},
e^{\delta\gamma m^\ast}\right\}>0.
\end{equation}
Then $|N(\varepsilon)|$ has exponential moments: for all $n\in\mathbb{N}$, we have 
\begin{equation}\label{eq:tail_Ne}
\P(|N(\varepsilon)|>n)
\leq (K/r^\eta)\varepsilon^{-\eta}e^{-n\min\{\delta\gamma,d\eta\}}
\bigg(\frac{\1_{\R\setminus\{\delta\gamma\}}(d\eta)}{|e^{\delta\gamma-d\eta}-1|}
+n\cdot \1_{\{\delta\gamma\}}(d\eta)\bigg)
\end{equation}
\end{prop}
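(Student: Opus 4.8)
The plan is to bound the tail of $N(\varepsilon)$ by splitting the quantity $c_{-n}=L_n^{1/\alpha}D_n = e^{\sup_{k\in\mZ^{n+1}}W_k}E_n$ (the decomposition established in the proof of Lemma~\ref{lem:eps-strong}) into the contributions of the random walk supremum and of the ``remainder'' sum $E_n$, and to control each factor separately. First I would use that $\{|N(\varepsilon)|>n\}=\{c_{-n}\geq\varepsilon\}$ (which follows since Lemma~\ref{lem:eps-strong} shows $(c_m)_{m\in\N}$ is strictly decreasing, so the set in~\eqref{eq:N_epsilon} is a down-set and $N(\varepsilon)=-\inf\{m:c_m<\varepsilon\}$). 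Applying Markov's inequality to the power $\eta$ gives
\[
\P(|N(\varepsilon)|>n)=\P\big(e^{\eta\sup_{k\in\mZ^{n+1}}W_k}E_n^\eta\geq\varepsilon^\eta\big)\leq\varepsilon^{-\eta}\,\E\big[e^{\eta\sup_{k\in\mZ^{n+1}}W_k}E_n^\eta\big].
\]
The exponent $\eta$ is precisely the positive root of $dt=\log(1+t/(\alpha\rho))$ chosen in Appendix~\ref{sec:notationSupSim}, which is exactly the ``tilting'' exponent that makes $e^{\eta W_k - (\text{drift})k}$ a martingale-like object, so the supremum of the random walk has a geometrically decaying tail with rate $d\eta$; this is where the factor $e^{-nd\eta}$ and the $1/r^\eta$ (with $r=(1-e^{\delta-d})/2$) come from, using the elementary inequality $(x+y)^\eta\le$ a constant times $(x^\eta+y^\eta)$ or a convexity bound depending on whether $\eta\lessgtr1$.

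Next I would bound $\E[E_n^\eta]$. Recall from Lemma~\ref{lem:eps-strong} that
\[
E_n = \frac{e^{(d-\delta)\chi_n+n\delta}}{1-e^{\delta-d}}+\sum_{k\in\mZ_{\chi_n}^n}e^{(k+1)d}U_k^{1/\alpha}S_k,
\]
where $\chi_n$ is the (geometrically distributed) ``reset'' index from~\eqref{eq:chi}. The first term is deterministically controlled by $e^{n\delta}$ times a constant once one conditions on $\chi_n$; the sum is a finite geometric-type series in the iid pairs $(U_k,S_k)$ whose law is $\mS^+(\alpha,\rho)\times\Beta(1,\rho)$ by Theorem~\ref{thm:CMSM}(a). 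Here one uses the finiteness of $\E[S^\gamma]$ (with $\gamma=r\alpha<\alpha$, so this moment is finite by the tail asymptotics noted in the remarks after Corollary~\ref{cor:moments}, or directly from the Mellin transform of $S$) together with the subadditivity bound $\E[(\sum_k a_k)^\gamma]\le\sum_k a_k^\gamma\,\E[S_k^\gamma]$ when $\gamma\le1$ — or a Minkowski-type bound when $\gamma>1$ — to produce a bound of the form $\E[E_n^\gamma]\le C e^{-n\delta\gamma\cdot(\text{something})}$. The constant $m^\ast=\lfloor\frac{1}{\delta\gamma}\log\E[S^\gamma]\rfloor+1$ is chosen exactly so that the partial sums of $e^{-\delta\gamma m}\E[S^\gamma]$ over $m\ge m^\ast$ form a convergent geometric series, which is what yields the specific constant $K$ in~\eqref{eq:K}; the $e^{d\eta}$ terms there come from the random-walk-supremum factor. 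Combining the two estimates, taking $\gamma$ and $\eta$ to match (or using Hölder to interpolate), and keeping track of whether $\delta\gamma=d\eta$ (in which case the geometric sum $\sum_{j}e^{-j\delta\gamma}e^{-(n-j)d\eta}$ degenerates into $n e^{-nd\eta}$, explaining the $\1_{\{\delta\gamma\}}(d\eta)$ term and the alternative $1/|e^{\delta\gamma-d\eta}-1|$ factor) delivers~\eqref{eq:tail_Ne}.

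The main obstacle I anticipate is the joint control of the two dependent factors $e^{\eta\sup_{k\le n}W_k}$ and $E_n$: the random walk $(W_k)$ and the pairs $(U_k,S_k)$ entering $E_n$ are built from the same underlying sequence, and the reset index $\chi_n$ couples them. The clean way around this is to condition on $\chi_n$ (or on the ``renewal'' structure of the dominating process from Appendix~\ref{sec:notationSupSim}): given $\chi_n=n-j$, the supremum over $[\chi_n,n]$ and the tail sum involve only the last $j$ iid pairs, while the prefix contributes a bounded-in-expectation factor, so the conditional expectation factorizes into a geometric term in $j$ times a term in $n-j$. Summing over the geometric law of $j$ then produces the final bound. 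The bookkeeping — matching $\gamma=r\alpha$ with the root $\eta$, handling the $\eta<1$ versus $\eta\ge1$ cases in the power inequality, and verifying that $e^{-\delta\gamma m^\ast}\E[S^\gamma]<1$ so the relevant geometric series converges — is routine but needs care to land exactly the stated constants.
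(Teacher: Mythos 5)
Your plan hinges on applying Markov's inequality at the exponent $\eta$ to the decomposition $c_{-n}=e^{\sup_{k\in\mZ^{n+1}}W_k}E_n$, but this step cannot work: $\eta$ is by construction the critical Cram\'er--Lundberg exponent of the random walk $(W_k)$, i.e.\ $\E[e^{\eta F_1}]=1$, so the stationary reflected supremum $R_0=\sup_{m\in\mZ^1}W_m$ has tail $\P(R_0>x)\asymp e^{-\eta x}$ and therefore $\E[e^{\eta R_0}]=\infty$. Writing $\sup_{k\in\mZ^{n+1}}W_k=W_n+R_n$ with $W_n$ and $R_n$ independent (the former depends on $(F_j)_{j\geq n}$, the latter on $(F_j)_{j<n}$) and $R_n\overset{d}{=}R_0$, the factor you would need to control, $\E\big[e^{\eta\sup_{k\in\mZ^{n+1}}W_k}\big]=\E[e^{\eta W_n}]\,\E[e^{\eta R_n}]=1\cdot\infty$, is already infinite before $E_n$ even enters. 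So the $\varepsilon^{-\eta}$ prefactor in~\eqref{eq:tail_Ne} cannot come from a Markov bound at power $\eta$; any strictly smaller exponent would lose the stated rate.

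The paper avoids this by never taking a moment of $e^{R_0}$. Instead it first dominates $D_n$ by $D_n'$ and drops the $U_k^{1/\alpha}$ factors, giving
\[
L_n^{1/\alpha}D_n \leq e^{R_0}\bigg(\frac{e^{nd}}{1-e^{\delta-d}}+\sum_{k\in\mZ^n}e^{(k+1)d}S_k\bigg),
\]
where the two factors are genuinely independent: $R_0$ is a function of $(U_k)_{k\in\mZ^0}$ only, while the bracketed sum is a function of $(S_k)_{k\in\mZ^0}$ only. Then, rather than bounding a moment, it uses the pathwise stochastic domination $R_0\leq\eta^{-1}E$ a.s.\ for a unit-mean exponential $E$ independent of the $(S_k)$'s (hence of the $\chi_n$'s). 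On the event that $R_0$ is not too large, the reset index $\chi_m$ (with $m$ roughly $\lfloor-(\varepsilon'+R_0)/d\rfloor$) already forces $L_n^{1/\alpha}D_n<\varepsilon$ for all $n\leq\chi_m$, yielding the pathwise bound $|N(\varepsilon)|\leq|\chi_{-J}|$ with $J=\lceil(\varepsilon'+\eta^{-1}E)/d\rceil$. The tail is then computed by conditioning on $J$ and summing the geometric tail of $\chi$ from~\eqref{eq:tail_chi_n} against the exponential tail of $J$, which is exactly where the two rates $\delta\gamma$ and $d\eta$, the indicator $\1_{\{\delta\gamma\}}(d\eta)$, and the $\varepsilon^{-\eta}$ (from $e^{\eta\varepsilon'}=(r\varepsilon)^{-\eta}$) emerge. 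You correctly identified the role of $\chi_n$, the independence structure to exploit, and the origin of $m^\ast$, but without replacing Markov at exponent $\eta$ by the coupling $R_0\leq\eta^{-1}E$ (or an equivalent pathwise comparison), the proof cannot close.
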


\begin{proof}[Proof of Proposition~\ref{prop:MainRuntime}]\label{proofPropMainRuntime}
Fix $n\in\mZ^0$, put $\varepsilon'=-\log((1-e^{\delta-d})\varepsilon/2)
=-\log(r\epsilon)>0$ and let $R_0=\sup_{m\in\mZ^1}W_m$. Since 
$L_n^{1/\alpha}=\exp(W_n+nd)$, then by~\eqref{eq:DominatingProcess}, we have
\begin{equation*}\begin{split}
L_n^{1/\alpha}D_n
& 
< e^{nd+\sup_{m\in\mZ^{n+1}}W_m}\bigg(\frac{1}{1-e^{\delta-d}}
+\sum_{k\in\mZ^n}e^{-(n-k-1)d}S_k\bigg)\\
& \leq e^{R_0}\bigg(\frac{e^{nd}}{1-e^{\delta-d}}
+\sum_{k\in\mZ^n}e^{(k+1)d}S_k\bigg).
\end{split}\end{equation*}
Assume that $n\leq\chi_m$ for some $m\in\mZ^1$, then $n\leq\chi_m<m$ and thus
\[
\sum_{k\in\mZ^n}e^{(k+1)d}S_k 
\leq \sum_{k\in\mZ^n}e^{(k+1)d}e^{\delta(m-k-1)}
= \frac{e^{\delta m}e^{n(d-\delta)}}{1-e^{\delta-d}}
< \frac{e^{md}}{1-e^{\delta-d}}.
\]
Hence $L_n^{1/\alpha}D_n<2\exp(R_0+md)/(1-e^{\delta-d})$. Thus, the choice 
$m=\lfloor -(\varepsilon'+R_0)/d\rfloor$ gives $L_n^{1/\alpha}D_n<\varepsilon$ 
where $\lfloor x\rfloor=\sup\{n\in\mathbb{Z}:n\leq x\}$ for $x\in\R$. This yields 
the bound $|N(\varepsilon)|\leq |\chi_{\lfloor -(\varepsilon'+R_0)/d\rfloor}|$. 
Since $(\chi_n)_{n\in\mZ^0}$ is a function of $(S_n)_{n\in\mZ^0}$ and 
$R_0$ is a function of $(U_n)_{n\in\mZ^0}$, the sequence $(\chi_n)_{n\in\mZ^0}$ is 
independent of $R_0$. By~\citep{MR1759244} (see
also~\citep[Rem.~4.3]{ExactSim}) there exists an exponential random variable $E$ 
with mean one, independent of $(\chi_n)_{n\in\mZ^0}$, satisfying 
$R_0\leq\eta^{-1}E$ a.s. Since the sequence $(\chi_n)_{n\in\mZ^0}$ is nonincreasing, 
we have $|\chi_{\lfloor -(\varepsilon'+R_0)/d\rfloor}| \leq |\chi_{-J}|$
where $J=\lceil(\varepsilon'+\eta^{-1}E)/d\rceil$. By definition~\eqref{eq:chi},
for any $n\in\N$ we have
\begin{equation}\label{eq:ne_1}\begin{split}
\P(|N(\varepsilon)|>n)
&\leq \P(|\chi_{-J}|>n) = \P(J\geq n)+\P(J<n,|\chi_{-J}|>n)\\
& = \P(J \geq n)
+\1_{(\lceil\varepsilon'/d\rceil,\infty)}(n)\cdot\sum_{k=\lceil \varepsilon'/d\rceil}^{n-1}
\P(J =k)\P(|\chi_{-k}|>n)\\
& \leq e^{(\varepsilon'-(n-1)d)\eta}
+\1_{(\lceil\varepsilon'/d\rceil,\infty)}(n)\cdot(e^{d\eta}-1)e^{\varepsilon'\eta}
\sum_{k=\lceil \varepsilon'/d\rceil}^{n-1}e^{-kd\eta}\P(|\chi_{-k}|>n).
\end{split}\end{equation}

We proceed to bound the tail probabilities of the variables $\chi_{-k}$. 
For all $n,k\in\N$,  by~\eqref{eq:chi} and~\eqref{eq:tail_chi_n} below, we obtain
\[\P(|\chi_{-k}|>n+k)=\P(|\chi_0|>n)\leq K'e^{-\delta\gamma n},
\quad\text{where}\quad K' = e^{\delta\gamma m^\ast}\max\{K_0,1\}\]
and $K_0$ is defined in~\eqref{eq:tail_chi_n}.
Thus we find that, for $n>\lceil\varepsilon'/d\rceil$ and 
$m=n-\lceil \varepsilon'/d\rceil$, we have 
\begin{equation*}\begin{split}
\sum_{k=\lceil \varepsilon'/d\rceil}^{n-1}e^{-kd\eta}\P(|\chi_{-k}|>n)
&\leq\sum_{k=\lceil \varepsilon'/d\rceil}^{n-1}e^{-kd\eta}K'e^{-\delta\gamma(n-k)}
= K'e^{-n\delta\gamma}
\sum_{k=\lceil \varepsilon'/d\rceil}^{n-1} e^{k(\delta\gamma-d\eta)}\\
&= K'e^{-m\delta\gamma}e^{-\lceil \varepsilon'/d\rceil d\eta}
\bigg(\frac{e^{m(\delta\gamma-d\eta)}-1}{e^{\delta\gamma-d\eta}-1}\cdot 
\1_{\R\setminus\{\delta\gamma\}}(d\eta)+m\cdot \1_{\{\delta\gamma\}}(d\eta)\bigg)\\
&\leq K'e^{-m\min\{\delta\gamma,d\eta\}}
e^{-\lceil \varepsilon'/d\rceil d\eta}
\bigg(\frac{\1_{\R\setminus\{\delta\gamma\}}(d\eta)}{|e^{\delta\gamma-d\eta}-1|}
+n\cdot \1_{\{\delta\gamma\}}(d\eta)\bigg).
\end{split}\end{equation*}

Note that $K$ defined in~\eqref{eq:K} equals 
$K=e^{d\eta}+(e^{d\eta}-1)K'e^{\delta\gamma}$. Let $n'=n-\varepsilon'/d$. 
Using~\eqref{eq:ne_1}, $\varepsilon'/d+1>\lceil\varepsilon'/d\rceil\geq\varepsilon'/d$ 
and the inequality in the previous display, we obtain
\begin{equation*}
\P(|N(\varepsilon)|>n)
\leq e^{-n'd\eta}e^{d\eta}+
\1_{(\lceil\varepsilon'/d\rceil,\infty)}(n)\cdot (K-e^{d\eta})e^{-n'\min\{\delta\gamma,d\eta\}}
\bigg(\frac{\1_{\R\setminus\{\delta\gamma\}}(d\eta)}{|e^{\delta\gamma-d\eta}-1|}
+n\cdot\1_{\{\delta\gamma\}}(d\eta)\bigg).
\end{equation*}
Since $r\varepsilon<1$ and $e^{n'd}=r\varepsilon e^{nd}$,
the result follows by simplifying the previous display.
\end{proof}

Recall from Theorem~\ref{thm:CMSM} that $M_0=Z^\me_1$. In applications, we often 
need to run the chain in Algorithm~\ref{alg:eps-CM-SM} until, for a given $x>0$, we can 
detect which of the events $\{M_0>x\}$ or $\{M_0<x\}$ occurred (note that 
$\P(M_0=x)=0$ for all $x>0$). This task is equivalent to simulating exactly the indicator 
$\1_{\{M_0>x\}}$. We now analyse the tail of the running time of such a simulation algorithm.


\begin{prop}\label{prop:hit_time_tail}
For any $x>0$, let $B(x)$ be the number of steps required to sample $\1_{\{M_0>x\}}$. 
Let $d,\eta,\delta$ and $\gamma$ be as in Proposition~\ref{prop:MainRuntime}. Then 
$B(x)$ has exponential moments: 
\begin{equation}\label{eq:hit_time_tail}
\P(B(x)>n)\leq K_0[e^{-sn}(1+n\cdot \1_{\{\delta\gamma\}}(d\eta))]^{1/(1+\eta)},
\quad\text{for all }n\in\N,
\end{equation}
where $s=\min\{d\eta,\delta\gamma\}$ and $K_0>0$ do not depend on $n$.
\end{prop}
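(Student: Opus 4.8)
The plan is to identify $B(x)$ with the stopping time $B^\Lambda(A)$ of~\eqref{eq:indicators_stop_time} and then to feed Proposition~\ref{prop:MainRuntime} into Lemma~\ref{lem:indicators_time}. Work in $\mathbb{X}=\R$ with the Euclidean distance and take $\Lambda=M_0=Z_1^\me$, $A=(x,\infty)$, so that $\partial A=\{x\}$ and, since $Z_1^\me$ is absolutely continuous, $\P(\Lambda\in\partial A)=0$. As the approximating sequence take $\Lambda_n=C(Z^\me)_n^\da(1)$, the value at the right endpoint of the lower sandwich produced by Algorithm~\ref{alg:eps-CM-SM}, and as the error bounds take $\Delta_n=c_n$ from~\eqref{eq:c_n}. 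Proposition~\ref{prop:Sandwich-CM}(c)--(d) gives $0\le M_0-\Lambda_n\le c_n=\Delta_n$, so this is an admissible approximation in the sense of Subsection~\ref{subsec:complexity}; moreover $\1_{\{M_0>x\}}$ is detected no later than step $B^\Lambda(A)$, so it suffices to bound $\P(B^\Lambda(A)>n)$. Finally, by the strict monotonicity of $(c_n)_{n\in\N}$ from Lemma~\ref{lem:eps-strong}, $N^\Lambda(\varepsilon)=\inf\{n\in\N:c_n<\varepsilon\}=|N(\varepsilon)|$ with $N(\varepsilon)$ as in~\eqref{eq:N_epsilon}.

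It then remains to verify the two estimates in~\eqref{eq:indicators_decay_asm}. The substantive input for the first is that $Z_1^\me$ admits a bounded density $p^\me$; under~(\nameref{asm:(K)}) this follows from the regularity theory for the marginals of stable meanders (see e.g.~\cite{MR2599201}). Hence $\P(d(\Lambda,\partial A)<\varepsilon)=\P(|Z_1^\me-x|<\varepsilon)\le 2\|p^\me\|_\infty\varepsilon$ for all $\varepsilon>0$ and all $x$, i.e. the first bound in~\eqref{eq:indicators_decay_asm} holds with $r_1=1$ and $K_1:=2\|p^\me\|_\infty$, which does not depend on $x$.

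For the second estimate, Proposition~\ref{prop:MainRuntime} gives, for $\varepsilon\in(0,1)$ and $n\in\N$,
\[
\P\big(N^\Lambda(\varepsilon)>n\big)=\P\big(|N(\varepsilon)|>n\big)\le \frac{K}{r^\eta}\,\varepsilon^{-\eta}\,\ti{q}(n),\qquad
\ti{q}(n):=e^{-n\min\{\delta\gamma,d\eta\}}\Big(\frac{\1_{\R\setminus\{\delta\gamma\}}(d\eta)}{|e^{\delta\gamma-d\eta}-1|}+n\,\1_{\{\delta\gamma\}}(d\eta)\Big).
\]
If $d\eta\neq\delta\gamma$ then $\ti{q}$ is already nonincreasing and proportional to $e^{-sn}$ with $s:=\min\{\delta\gamma,d\eta\}$; if $d\eta=\delta\gamma=s$ then $\ti{q}(n)=ne^{-sn}$, and its nonincreasing envelope $q(n):=\sup_{m\ge n}\ti{q}(m)$ is constant for $n$ below the peak $m=1/s$ and coincides with $\ti{q}(n)$ afterwards. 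In both cases $q(n)\to0$ and, up to a fixed multiplicative constant, $q(n)\le(1+n\cdot\1_{\{\delta\gamma\}}(d\eta))e^{-sn}$, so, replacing $q$ by $\min\{1,q\}$, the second bound in~\eqref{eq:indicators_decay_asm} holds with $r_2:=\eta$ and $K_2:=\max\{1,K/r^\eta\}$ (the case $\varepsilon=1$ being trivial). Applying Lemma~\ref{lem:indicators_time} with these $r_1=1$, $r_2=\eta$ yields
\[
\P\big(B^\Lambda(A)>n\big)\le(K_1+2^\eta K_2)\,q(n)^{1/(1+\eta)}\le K_0\big[e^{-sn}\big(1+n\cdot\1_{\{\delta\gamma\}}(d\eta)\big)\big]^{1/(1+\eta)}
\]
for all large $n$, with $s=\min\{d\eta,\delta\gamma\}$; enlarging $K_0$ to absorb the finitely many remaining $n$ (using $\P(B(x)>n)\le 1$ and that the bracket is bounded below on any finite range) gives~\eqref{eq:hit_time_tail}. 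Since $\sum_n e^{\lambda n}\P(B(x)>n)<\infty$ whenever $\lambda<s/(1+\eta)$, $B(x)$ has exponential moments.

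The one genuinely substantive step is the uniform concentration bound $\P(|Z_1^\me-x|<\varepsilon)\le K_1\varepsilon$ in the second paragraph; everything else is a mechanical combination of Proposition~\ref{prop:MainRuntime} with Lemma~\ref{lem:indicators_time}, the only minor care being the passage to the nonincreasing envelope of $\ti{q}$ in the borderline case $d\eta=\delta\gamma$ so that the hypotheses of Lemma~\ref{lem:indicators_time} are literally satisfied.
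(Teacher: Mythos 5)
Your proposal is correct and follows essentially the same route as the paper's proof: identify $B(x)$ with $B^\Lambda(A)$ for $A=(x,\infty)$, verify the two bounds in~\eqref{eq:indicators_decay_asm} via the density of $Z_1^\me$ and Proposition~\ref{prop:MainRuntime}, then invoke Lemma~\ref{lem:indicators_time}. You are in fact slightly more careful than the paper in passing to a genuinely nonincreasing envelope of $\ti q$ in the borderline case $d\eta=\delta\gamma$, and slightly less careful in asserting a globally bounded density (the paper only uses continuity of $f_\me$ to get a local Lipschitz constant $K_1>f_\me(x)$, which is why Remark~\ref{rem:cor_hit_time} notes $K_0$ depends on $x$) — but neither affects the conclusion, since the proposition only requires $K_0$ not to depend on $n$.
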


\begin{proof}[Proof of Proposition~\ref{prop:hit_time_tail}]\label{proofPropHittingTimeRuntime}
The inequality in~\eqref{eq:hit_time_tail} will follow from Lemma~\ref{lem:indicators_time} 
once we identify the constants $r_1$, $K_1$, $r_2$, $K_2$ and the function 
$q:\N\to(0,\infty)$ that satisfy the inequalities in~\eqref{eq:indicators_decay_asm}. 
By~\citep[Lem.~8]{MR2599201}, the distribution $\mS^\me(\alpha,\rho)$ has a continuous 
density $f_\me$, implying that the distribution function of $\mS^\me(\alpha,\rho)$ is 
Lipschitz at $x$. Thus we may set $r_1=1$ and there exists some $K_1>f_\me(x)$ such that 
the first inequality in~\eqref{eq:indicators_decay_asm} holds. Similarly, \eqref{eq:tail_Ne} 
and~\eqref{eq:K} in Proposition~\ref{prop:MainRuntime} imply that the second inequality 
in~\eqref{eq:indicators_decay_asm} holds if we set $r_2=\eta$, 
$K_2=K/(r^\eta|e^{\delta\gamma-d\eta}-1|)$ and 
$q(n)=e^{-sn}(1+n\cdot\1_{\{\delta\gamma\}}(d\eta))$, where $s=\min\{d\eta,\delta\gamma\}$. 
Thus, Lemma~\ref{lem:indicators_time} implies the inequality in~\eqref{eq:hit_time_tail} for 
$K_0=K_1+2^{r_2} K_2$.
\end{proof}

\begin{rem}\label{rem:cor_hit_time}
We stress that the constant $K_0$ is not explicit since 
the constant $K_1$ in the proof above depends on the behaviour of the density $f_\me$ 
of $Z^\me_1$ in a neighbourhood of $x$. 
To the best of our knowledge 
even the value $f_\me(x)$ 
is currently not available in the literature. 
\end{rem}

\subsubsection{Proof of Theorem~\ref{thm:all_complexities}}
\label{proofThmComplexities}
The computational complexity of Algorithm~\ref{alg:eps-CM-SM} is bounded above by a 
constant multiple of $|N(\varepsilon)| \log |N(\varepsilon)|$, 
cf.~Remark~\ref{rem:eps-CM-SM}(i) following the statement of
Algorithm~\ref{alg:eps-CM-SM}. By Proposition~\ref{prop:MainRuntime}, its computational 
complexity has exponential moments. Since Algorithm~\ref{alg:eps-CM} amounts to 
running Algorithm~\ref{alg:eps-CM-SM} twice, its computational complexity also has 
exponential moments. By Proposition~\ref{prop:hit_time_tail}, the running time of the exact 
simulation algorithm for the indicator $\1_{\{M_0>x\}}$ has exponential moments.  It 
remains to analyse the runtime of Algorithm~\ref{alg:fd_meander}.

Recall that line~8 in Algorithm~\ref{alg:fd_meander} requires sampling a beta random 
variable and two meanders at time 1 with laws $Z^\me_1$ and $(-Z)^\me_1$. The former 
(resp. latter) meander has the positivity parameter $\rho$ (resp. $1-\rho$). Moreover, we 
may use Algorithm~\ref{alg:eps-CM-SM} (see also Remark~\ref{rem:eps-CM}) to obtain 
an $\varepsilon$SS of $Z^\me_1$ and $(-Z)^\me_1$ by running backwards the dominating 
processes (defined in~\eqref{eq:DominatingProcess}, see 
Appendix~\ref{sec:notationSupSim}) of the Markov chains in Theorem~\ref{thm:CMSM}. 
Let $(d,\eta,\delta,\gamma)$ and $(d',\eta',\delta',\gamma')$ be the parameters 
required for the definition of the respective dominating processes, introduced at the 
beginning of Appendix~\ref{sec:notationSupSim}. The $\varepsilon$SS algorithms invoked 
in line~8 of Algorithm~\ref{alg:fd_meander} require $2m+1$ independent dominating 
processes to be simulated ($m+1$ of them with 
parameters $(d,\eta,\delta,\gamma)$ and $m$ of them with parameters 
$(d',\eta',\delta',\gamma')$). Denote by $N_k(\varepsilon)$, $k\in\{1,\ldots,2m+1\}$ and 
$\varepsilon>0$, their respective termination times, defined as in~\eqref{eq:N_epsilon}. 

Note that, in the applications of Algorithm~\ref{alg:eps-CM-SM}, the sampled faces need 
not be sorted (see Remark~\ref{rem:eps-CM-SM}), thus eliminating the logarithmic 
effect described in Remark~\ref{rem:eps-CM-SM}(i). The cumulative complexity of 
executing $i$ times the loop from line~3 to line~12 in Algorithm~\ref{alg:fd_meander}, 
producing an $\varepsilon_i$-strong sample of 
$(Z_{t_1},\ldots,Z_{t_m})$ conditioned on $\un{Z}_{t_1}\geq0$, 
only depends on the precision $\varepsilon_i$ and not on the index $i$. Hence, the 
cumulative complexity is bounded by a constant multiple of $N^\Lambda(\varepsilon)=
\sum_{k=1}^{2m+1} |N_k((t_{\lfloor k/2\rfloor+1}-t_{\lfloor k/2\rfloor})^{-1/\alpha}
\varepsilon/(2m+1))|$, where we set $\varepsilon=\varepsilon_i$. Let $B'$ 
denote the sum of the number of steps taken by the dominating processes until the 
condition in line~12 of Algorithm~\ref{alg:fd_meander} is satisfied. 
We now prove that $B'$ has exponential moments. 

Note that $N^\Lambda(\varepsilon)\leq 
(2m+1)\max_{k=\{1,\ldots,2m+1\}}|N_k(T^{-1/\alpha}\varepsilon/(2m+1))|$. 
Moreover, for any $n'$ independent random variables 
$\vartheta_1,\ldots,\vartheta_{n'}$, we have 
\[\P\bigg(\max_{k\in\{1,\ldots,n'\}}\vartheta_k>x\bigg)=
\P\bigg(\bigcup_{k=1}^{n'}\{\vartheta_k>x\}\bigg)
\leq \sum_{k=1}^{n'}\P(\vartheta_k>x),\quad x\in\R.\]
Proposition~\ref{prop:MainRuntime} implies that the second inequality 
in~\eqref{eq:indicators_decay_asm} is satisfied by $N^\Lambda(\varepsilon)$ with 
$r_2=\max\{\eta,\eta'\}$, $q(n)=e^{-sn}n$ and some $K_2>0$, where 
$s=\min\{d\eta,\delta\gamma,d'\eta',\delta'\gamma'\}$. Thus, by 
Lemma~\ref{lem:indicators_time}, we obtain $\P(B'>n)
\leq K'(e^{-sn}n)^{1/(1+\max\{\eta,\eta'\})}$ for some $K'>0$ and all $n\in\N$.

The loop from line~2 to line~13 of Algorithm~\ref{alg:fd_meander} executes lines~4 
through~12 a geometric number of times $R$ with success probability  $p=\P(\un{Z}_{t_m}\geq0|\un{Z}_{t_1}\geq0)>0$. Hence, the running time $B''$ of  
Algorithm~\ref{alg:fd_meander} can be expressed as $\sum_{i=1}^R B'_i$, where $B'_i$ 
are iid with the same distribution as $B'$, independent of $R$.
Note that $m:\lambda\mapsto\E[e^{\lambda B'}]$ is finite for any 
$\lambda<s/(1+\max\{\eta,\eta'\})$. Since $m$ is an analytic function and $m(0)=1$, 
then there exists some $x^*>0$ such that $m(x)<1/(1-p)$ for all $x\in(0,x^*)$. Hence, 
the moment generating function of $B''$ satisfies, $\E e^{\lambda B''}=\E m(\lambda)^R$,
which is finite if $\lambda<x^*$, concluding the proof. 

\appendix

\section{\label{sec:notationSupSim}Auxiliary processes and the construction of $\{D_n\}$}

Fix constants $d$ and $\delta$ satisfying $0<\delta<d<\frac{1}{\alpha\rho}$ and 
let $\eta=-\alpha\rho-\mathcal{W}_{-1}\big(-\alpha\rho de^{-\alpha\rho d}\big)/d$, 
where $\mathcal{W}_{-1}$ is the secondary branch of the Lambert W 
function~\cite{MR1414285} ($\eta$ is only required in~\citep[Alg.~2]{ExactSim}). 
Let $I_k^n=\1_{\{ S_k>e^{\delta(n-k-1)}\} }$ for all $n\in\mZ^0$ and $k\in\mZ^n$. 
Fix $\gamma>0$ with $\E[S^\gamma]<\infty$ (see~\citep[App.~A]{ExactSim}), 
where $S\sim\mS^+(\alpha,\rho)$. By Markov's inequality, we have 
\begin{equation}\label{eq:probTails}
p(n)=\P(S\leq e^{\delta n})\geq 1-e^{-\delta\gamma n}\E[S^{\gamma}],\quad 
n\in\N\cup\{0\},
\end{equation}
implying $\sum_{n=0}^{\infty}(1-p\left(n\right))<\infty$. Since the sequence
$(S_k)_{k\in\mZ^0}$ is iid with distribution $\mS^+(\alpha,\rho)$ (as in 
Theorem~\ref{thm:CMSM}), the Borel-Cantelli lemma ensures that, for a fixed 
$n\in\mZ^0$, the events $\{S_k>e^{\delta(n-k-1)}\}=\{I_k^n=1\}$ occur for only 
finitely many $k\in\mZ^n$ a.s. For $n\in\mZ^1$ let $\chi_n$ be the smallest time 
beyond which the indicators $I_k^n$ are all zero:
\begin{equation}\label{eq:chi}
\chi_{n}=\min\left\{n-1,\inf\left\{ k\in\mZ^n:I_{k}^{n}=1\right\}\right\},
\end{equation}
with the convention $\inf\emptyset=-\infty$. Note that $-\infty<\chi_n<n$ a.s. for 
all $n\in\mZ^0$. Since $\mZ^0$ is countable, we have $-\infty<\chi_n<n$ for all 
$n\in\mZ^0$ a.s. Let $m^*=\lfloor\frac{1}{\delta\gamma}\log\E[S^\gamma]\rfloor+1$ 
and note that $e^{-\delta\gamma m}\E[S^\gamma]<1$ for all $m\geq m^*$ . Hence, for 
all $n\in\N\cup\{0\}$, the following inequality holds (cf.~\citep[Sec.~4.1]{ExactSim}) 
\begin{equation}\label{eq:tail_chi_n}
\P(|\chi_0|>n+m^\ast)\leq K_0 e^{-\delta\gamma n},\quad\text{where}\quad
K_0 = \frac{e^{-\delta\gamma m^\ast}\E[S^\gamma]}
{(1-e^{-\delta\gamma})(1-e^{-\delta\gamma m^\ast}\E[S^\gamma])}.
\end{equation}
Indeed, by the inequality~\eqref{eq:probTails}, for every $m\geq m^*$ we have 
\[\begin{split}
\P(|\chi_0|\leq m)
&=\prod_{j=m}^\infty p(j)\geq\prod_{j=m}^\infty (1-e^{-\delta\gamma j}\E[S^\gamma])
=\exp\bigg(\sum_{j=m}^\infty\log(1-e^{-\delta\gamma j}\E[S^\gamma])\bigg)\\
&=\exp\bigg(-\sum_{j=m}^\infty\sum_{k=1}^\infty
\frac{1}{k}e^{-\delta\gamma jk}\E[S^\gamma]^k\bigg)
\geq\exp\bigg(-\sum_{k=1}^\infty\frac{e^{-\delta\gamma mk}\E[S^\gamma]^k}
{1-e^{-\delta\gamma k}}\bigg)\\
&\geq\exp\bigg(-\frac{e^{-\delta\gamma m}\E[S^\gamma]}
{(1-e^{-\delta\gamma})(1-e^{-\delta\gamma m}\E[S^\gamma])}\bigg)\\
&\geq\exp\big(-K_0e^{-\delta\gamma(m-m^*)}\big)
\geq 1-K_0e^{-\delta\gamma(m-m^*)}.
\end{split}\]

Define the iid sequence $(F_n) _{n\in\mZ^0}$ by $F_n=d+\frac{1}{\alpha}
\log(1-U_n)$. Note that $d-F_n$ is exponentially distributed with $\E[d-F_n]=
\frac{1}{\alpha\rho}$. Let $(W_n)_{n\in\mZ^1}$ be a random walk defined by 
$W_n=\sum_{k\in\mZ_n^0}F_k$. Let $(R_n)_{n\in\mZ^1}$ be reflected process of 
$(W_n)_{n\in\mZ^1}$ from the infinite past
\[
R_n=\max_{k\in\mZ^{n+1}}W_k-W_n,\quad n\in\mZ^1.
\]
For any $n\in\mZ^1$ define the following random variables
\begin{equation}\label{eq:DominatingProcess}\begin{split}
D_n 
& = \exp(R_n)\left(\frac{e^{(\delta-d)(n-\chi_{n})}}{1-e^{\delta-d}}
+ \sum_{k\in\mZ_{\chi_n}^n}e^{-(n-k-1)d}U_k^{1/\alpha}S_k\right),\\
D_n'
& = \exp(R_n)\left(\frac{1}{1-e^{\delta-d}}+D_n^{\prime\prime}\right),
\quad\text{where}\quad 
D_n^{\prime\prime}=\sum_{k\in\mZ^n}e^{-(n-k-1)d}S_k.
\end{split}\end{equation}
Note that the series in $D_{n}^{\prime\prime}$ is absolutely convergent by the 
Borel-Cantelli lemma, but $D_{n}'$ cannot be simulated directly as it depends on 
an infinite sum. In fact, as was proven in~\citep[Sec.~4]{ExactSim}, it is 
possible to simulate $((\Theta_n,D_{n+1}))_{n\in\mZ^0}$ backward in time.

Let $\mathcal{A}=(0,\infty)\times(0,1)$ put $\Theta_n=(S_n,U_n)$. Define the 
update function $\phi:(0,\infty)\times\mathcal{A}\to(0,\infty)$ given by 
$\phi(x,\theta)=(1-u)^{1/\alpha}x+u^{1/\alpha}s$ where $\theta=(s,u)$. 
By \citep[Lem.~2]{ExactSim}, $M_n\leq D_n\leq D_n'$ for $n\in\mZ^1$ 
and that $((\Theta_n,R_n,D_{n+1}'))_{n\in\mZ^0}$ is Markov, stationary, and 
$\varphi$-irreducible (see definition~\citep[p.~82]{MR2509253}) with respect 
to its invariant distribution.

Hence, we may iterate~\eqref{eq:SMPerEq} to obtain for $m\in\mZ^1$ and 
$n\in\mZ^m$,
\begin{equation}\label{eq:nth_step}\begin{split}
M_m 
&=\bigg(\prod_{k\in\mZ_n^m}(1-U_k)^{1/\alpha}\bigg)M_n+\sum_{k\in\mZ_n^m}
\bigg(\prod_{j\in\mZ_k^m}(1-U_j)^{1/\alpha}\bigg)U_k^{1/\alpha}S_k\\
&=\underbrace{\phi(\cdots\phi}_{m-n}(M_n,\Theta_n),\ldots,\Theta_{m-1}).
\end{split}\end{equation}


\section{On regularity}
\label{app:On_regularity}
\begin{lem}
	\label{lem:Kallenberg_regularity}
Assume that $X$ is a L\'{e}vy process generated by $(b,\sigma^{2},\nu)$
and define the function 
$\ov{\sigma}^2(u)=\sigma^{2}+\int_{-u}^{u}x^{2}\nu(dx)$. 
If $\lim_{u\searrow0}u^{-2}|\log u|^{-1}\ov{\sigma}^2(u)=\infty$, 
then~(\nameref{asm:(K)})
holds. 
\end{lem}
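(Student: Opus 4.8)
The plan is to compute directly with the characteristic exponent. Writing $\psi$ for the characteristic exponent of $X$, so that $\mathbb{E}(e^{iuX_t})=e^{t\psi(u)}$, the L\'evy--Khintchine formula gives
\[
\operatorname{Re}\psi(u)=-\frac{\sigma^2u^2}{2}-\int_{\mathbb{R}}\bigl(1-\cos(ux)\bigr)\,\nu(dx)\le 0,
\]
with the drift $b$ playing no role. Hence $|\mathbb{E}(e^{iuX_t})|=e^{t\operatorname{Re}\psi(u)}$, and since $u\mapsto|\mathbb{E}(e^{iuX_t})|$ is continuous and bounded by $1$, Assumption~(\nameref{asm:(K)}) reduces to integrability of $e^{t\operatorname{Re}\psi(u)}$ near $\pm\infty$, for which it suffices to show that $-\operatorname{Re}\psi(u)\ge\tfrac{2}{t}\log|u|$ for all sufficiently large $|u|$.

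The key step is a lower bound on $-\operatorname{Re}\psi(u)$ in terms of $\overline\sigma^2$. The idea is to discard all of the integral except the part over $\{|x|\le 1/|u|\}$, on which $|ux|\le 1$, and apply the elementary inequality $1-\cos\theta\ge\tfrac{2}{\pi^2}\theta^2$ (valid for $|\theta|\le\pi$). Together with $\tfrac12\ge\tfrac{2}{\pi^2}$ this yields
\[
-\operatorname{Re}\psi(u)\ge\frac{\sigma^2u^2}{2}+\frac{2}{\pi^2}u^2\int_{|x|\le 1/|u|}x^2\,\nu(dx)\ge\frac{2}{\pi^2}\,u^2\,\overline\sigma^2\!\left(\frac{1}{|u|}\right),
\]
where $\overline\sigma^2(v)=\sigma^2+\int_{-v}^{v}x^2\nu(dx)$ is finite for $v$ small since $\nu$ is a L\'evy measure.

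It then remains to translate the hypothesis. Substituting $v=1/|u|$ (so $|\log v|=\log|u|$ for $|u|>1$), the assumption $\lim_{v\searrow0}v^{-2}|\log v|^{-1}\overline\sigma^2(v)=\infty$ becomes $\lim_{|u|\to\infty}u^2\overline\sigma^2(1/|u|)/\log|u|=\infty$. Thus for $|u|$ large enough $u^2\overline\sigma^2(1/|u|)\ge\tfrac{\pi^2}{t}\log|u|$, whence $-\operatorname{Re}\psi(u)\ge\tfrac{2}{t}\log|u|$ and $|\mathbb{E}(e^{iuX_t})|\le|u|^{-2}$; splitting $\int_{\mathbb{R}}|\mathbb{E}(e^{iuX_t})|\,du$ into a compact piece (where the integrand is bounded) and the two tails (dominated by $\int|u|^{-2}\,du$) finishes the argument. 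The proof is essentially routine once the truncation above is set up; the only mild care needed is choosing the truncation level exactly $1/|u|$ so that $|ux|\le 1$ on the retained domain, and observing that no assumption on $b$ or on the tails of $\nu$ enters.
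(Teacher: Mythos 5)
Your argument is correct and follows essentially the same route as the paper's proof attributed to Kallenberg: compute $-\log|\mathbb{E}e^{iuX_t}|=\tfrac{t}{2}u^2\sigma^2+t\int(1-\cos(ux))\,\nu(dx)$, discard the integral outside $\{|x|\le1/|u|\}$, bound $1-\cos\theta$ below by a constant times $\theta^2$ on the retained region, and invoke the hypothesis on $\overline\sigma^2$ to obtain $O(|u|^{-2})$ decay. The only cosmetic difference is the constant in the elementary cosine bound ($2/\pi^2$ for you, $1/3$ in the paper), which is immaterial.
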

\begin{proof}
(the proof is due to Kallenberg \citep{MR628873}) Let $\psi(u)=\log\E[e^{iuX_1}]$
and note that for large enough $|u|$ and fixed $t>0$ we have
\[
-\log\big(\big|e^{t\psi(u)}\big|\big)
=\frac{1}{2}tu^{2}\sigma^{2}+t\int_{-\infty}^{\infty}(1-\cos(ux))\nu(dx)
\geq\frac{1}{3}tu^{2}\ov{\sigma}^2\big(|u|^{-1}\big)\geq2|\log|u||.
\]
Hence, $|e^{t\psi(u)}|=|\mathbb{E}[e^{iuX_{t}}]|=\mathcal{O}(u^{-2})$
as $|u|\to\infty$, which yields (\nameref{asm:(K)}).
\end{proof}

\bibliographystyle{amsalpha}
\bibliography{References}

\newcommand{\etalchar}[1]{$^{#1}$}
\providecommand{\bysame}{\leavevmode\hbox to3em{\hrulefill}\thinspace}
\providecommand{\MR}{\relax\ifhmode\unskip\space\fi MR }
\providecommand{\MRhref}[2]{%
  \href{http://www.ams.org/mathscinet-getitem?mr=#1}{#2}
}
\providecommand{\href}[2]{#2}
\begin{thebibliography}{CGH{\etalchar{+}}96}

\bibitem[AC01]{MR1836746}
Larbi Alili and Lo\"{\i}c Chaumont, \emph{A new fluctuation identity for
  {L}\'{e}vy processes and some applications}, Bernoulli \textbf{7} (2001),
  no.~3, 557--569. \MR{1836746}

\bibitem[ACGZ19]{MR3912200}
L.~Alili, L.~Chaumont, P.~Graczyk, and T.~\.{Z}ak, \emph{Space and time
  inversions of stochastic processes and {K}elvin transform}, Math. Nachr.
  \textbf{292} (2019), no.~2, 252--272. \MR{3912200}

\bibitem[AHUB19]{2019arXiv190304745A}
Osvaldo Angtuncio~Hern{\'a}ndez and Ger{\'o}nimo Uribe~Bravo, \emph{{Dini
  derivatives for Exchangeable Increment processes and applications}}, arXiv
  e-prints (2019), arXiv:1903.04745.

\bibitem[AI18]{MR3925667}
S{\o}ren Asmussen and Jevgenijs Ivanovs, \emph{A factorization of a {L}\'{e}vy
  process over a phase-type horizon}, Stoch. Models \textbf{34} (2018), no.~4,
  397--408. \MR{3925667}

\bibitem[AP11]{MR2825583}
Josh Abramson and Jim Pitman, \emph{Concave majorants of random walks and
  related {P}oisson processes}, Combin. Probab. Comput. \textbf{20} (2011),
  no.~5, 651--682. \MR{2825583}

\bibitem[BC15]{MR3404635}
Jose Blanchet and Xinyun Chen, \emph{Steady-state simulation of reflected
  {B}rownian motion and related stochastic networks}, Ann. Appl. Probab.
  \textbf{25} (2015), no.~6, 3209--3250. \MR{3404635}

\bibitem[BCD17]{MR3619789}
Jose Blanchet, Xinyun Chen, and Jing Dong, \emph{{$\varepsilon$}-strong
  simulation for multidimensional stochastic differential equations via rough
  path analysis}, Ann. Appl. Probab. \textbf{27} (2017), no.~1, 275--336.
  \MR{3619789}

\bibitem[BDM16]{MR3497380}
Dariusz Buraczewski, Ewa Damek, and Thomas Mikosch, \emph{Stochastic models
  with power-law tails}, Springer Series in Operations Research and Financial
  Engineering, Springer, [Cham], 2016. \MR{3497380}

\bibitem[Ber96]{MR1406564}
Jean Bertoin, \emph{L\'evy processes}, Cambridge Tracts in Mathematics, vol.
  121, Cambridge University Press, Cambridge, 1996. \MR{1406564}

\bibitem[Bil99]{MR1700749}
Patrick Billingsley, \emph{Convergence of probability measures}, second ed.,
  Wiley Series in Probability and Statistics: Probability and Statistics, John
  Wiley \& Sons, Inc., New York, 1999, A Wiley-Interscience Publication.
  \MR{1700749}

\bibitem[Bin73]{MR0415780}
N.~H. Bingham, \emph{Maxima of sums of random variables and suprema of stable
  processes}, Z. Wahrscheinlichkeitstheorie und Verw. Gebiete \textbf{26}
  (1973), no.~4, 273--296. \MR{0415780 (54 \#3859)}

\bibitem[BM18]{MR3780387}
Jose Blanchet and Karthyek Murthy, \emph{Exact simulation of multidimensional
  reflected {B}rownian motion}, J. Appl. Probab. \textbf{55} (2018), no.~1,
  137--156. \MR{3780387}

\bibitem[BPR12]{MR2995793}
Alexandros Beskos, Stefano Peluchetti, and Gareth Roberts,
  \emph{{$\epsilon$}-strong simulation of the {B}rownian path}, Bernoulli
  \textbf{18} (2012), no.~4, 1223--1248. \MR{2995793}

\bibitem[BS02]{MR1912205}
Andrei~N. Borodin and Paavo Salminen, \emph{Handbook of {B}rownian
  motion---facts and formulae}, second ed., Probability and its Applications,
  Birkh\"{a}user Verlag, Basel, 2002. \MR{1912205}

\bibitem[BZ04]{MR2019968}
Stefano Bonaccorsi and Lorenzo Zambotti, \emph{Integration by parts on the
  {B}rownian meander}, Proc. Amer. Math. Soc. \textbf{132} (2004), no.~3,
  875--883. \MR{2019968}

\bibitem[BZ17]{BlanchetPerfectSim}
Jose {Blanchet} and Fan {Zhang}, \emph{{Exact Simulation for Multivariate
  It{\^o} Diffusions}}, arXiv e-prints (2017), arXiv:1706.05124.

\bibitem[CD05]{MR2164035}
L.~Chaumont and R.~A. Doney, \emph{On {L}\'evy processes conditioned to stay
  positive}, Electron. J. Probab. \textbf{10} (2005), no. 28, 948--961.
  \MR{2164035}

\bibitem[CD08]{MR2375597}
\bysame, \emph{Corrections to: ``{O}n {L}\'evy processes conditioned to stay
  positive'' [{E}lectron {J}. {P}robab. {\bf 10} (2005), no. 28, 948--961;
  {MR}2164035]}, Electron. J. Probab. \textbf{13} (2008), no. 1, 1--4.
  \MR{2375597}

\bibitem[CDN19]{eSS_fBM}
Yi~{Chen}, Jing {Dong}, and Hao {Ni}, \emph{{$\epsilon$-Strong Simulation of
  Fractional Brownian Motion and Related Stochastic Differential Equations}},
  arXiv e-prints (2019), arXiv:1902.07824.

\bibitem[CGH{\etalchar{+}}96]{MR1414285}
R.~M. Corless, G.~H. Gonnet, D.~E.~G. Hare, D.~J. Jeffrey, and D.~E. Knuth,
  \emph{On the {L}ambert {$W$} function}, Adv. Comput. Math. \textbf{5} (1996),
  no.~4, 329--359. \MR{1414285}

\bibitem[CH12]{MR2998701}
Nan Chen and Zhengyu Huang, \emph{Brownian meanders, importance sampling and
  unbiased simulation of diffusion extremes}, Oper. Res. Lett. \textbf{40}
  (2012), no.~6, 554--563. \MR{2998701}

\bibitem[CH13]{MR3092549}
\bysame, \emph{Localization and exact simulation of {B}rownian motion-driven
  stochastic differential equations}, Math. Oper. Res. \textbf{38} (2013),
  no.~3, 591--616. \MR{3092549}

\bibitem[Cha97]{MR1465814}
Lo\"{\i}c Chaumont, \emph{Excursion normalis\'{e}e, m\'{e}andre et pont pour
  les processus de {L}\'{e}vy stables}, Bull. Sci. Math. \textbf{121} (1997),
  no.~5, 377--403. \MR{1465814}

\bibitem[Cha13]{MR3098676}
\bysame, \emph{On the law of the supremum of {L}\'evy processes}, Ann. Probab.
  \textbf{41} (2013), no.~3A, 1191--1217. \MR{3098676}

\bibitem[CM16]{MR3531705}
Lo\"{\i}c Chaumont and Jacek Ma{\l}ecki, \emph{On the asymptotic behavior of
  the density of the supremum of {L}\'evy processes}, Ann. Inst. Henri
  Poincar\'e Probab. Stat. \textbf{52} (2016), no.~3, 1178--1195. \MR{3531705}

\bibitem[CPR13]{MR3160562}
Lo\"{\i}c Chaumont, Henry Pant\'{\i}, and V\'{\i}ctor Rivero, \emph{The
  {L}amperti representation of real-valued self-similar {M}arkov processes},
  Bernoulli \textbf{19} (2013), no.~5B, 2494--2523. \MR{3160562}

\bibitem[Dev10]{MR2730908}
Luc Devroye, \emph{On exact simulation algorithms for some distributions
  related to {B}rownian motion and {B}rownian meanders}, Recent developments in
  applied probability and statistics, Physica, Heidelberg, 2010, pp.~1--35.
  \MR{2730908}

\bibitem[DI77]{MR0436354}
Richard~T. Durrett and Donald~L. Iglehart, \emph{Functionals of {B}rownian
  meander and {B}rownian excursion}, Ann. Probability \textbf{5} (1977), no.~1,
  130--135. \MR{0436354}

\bibitem[dLF15]{GenInverses}
Arnaud de~La~Fortelle, \emph{A study on generalized inverses and increasing
  functions part {I}: generalized inverses}, 09 2015.

\bibitem[DS10]{MR2599201}
R.~A. Doney and M.~S. Savov, \emph{The asymptotic behavior of densities related
  to the supremum of a stable process}, Ann. Probab. \textbf{38} (2010), no.~1,
  316--326. \MR{2599201}

\bibitem[EG00]{MR1759244}
Katherine~Bennett Ensor and Peter~W. Glynn, \emph{Simulating the maximum of a
  random walk}, J. Statist. Plann. Inference \textbf{85} (2000), no.~1-2,
  127--135, 2nd St. Petersburg Workshop on Simulation (1996). \MR{1759244}

\bibitem[FKY14]{MR3404143}
Takahiko Fujita, Yasuhiro Kawanishi, and Marc Yor, \emph{On the one-sided
  maximum of {B}rownian and random walk fragments and its applications to new
  exotic options called ``meander option''}, Pac. J. Math. Ind. \textbf{6}
  (2014), Art. 2, 7. \MR{3404143}

\bibitem[GMU18a]{Jorge_GitHub2}
Jorge~I. {Gonz\'alez C\'azares}, Aleksandar Mijatovi\'c, and Ger\'onimo {Uribe
  Bravo}, \emph{Code for the $\epsilon$-strong simulation of stable meanders},
  \url{https://github.com/jorgeignaciogc/StableMeander.jl}, 2018, GitHub
  repository.

\bibitem[GMU18b]{LevySupSim}
Jorge~I. {Gonz{\'a}lez C{\'a}zares}, Aleksandar {Mijatovi{\'c}}, and
  Ger{\'o}nimo {Uribe Bravo}, \emph{{Geometrically Convergent Simulation of the
  Extrema of L{\'e}vy Processes}}, arXiv e-prints (2018), arXiv:1810.11039.

\bibitem[GMU19]{ExactSim}
Jorge~I. {Gonz\'{a}lez C\'{a}zares}, Aleksandar Mijatovi\'{c}, and Ger\'{o}nimo
  {Uribe Bravo}, \emph{Exact simulation of the extrema of stable processes},
  Adv. in Appl. Probab. \textbf{51} (2019), no.~4.

\bibitem[GY05]{MR2136870}
L\'{e}onard Gallardo and Marc Yor, \emph{Some new examples of {M}arkov
  processes which enjoy the time-inversion property}, Probab. Theory Related
  Fields \textbf{132} (2005), no.~1, 150--162. \MR{2136870}

\bibitem[IO19]{Iafrate2019}
F.~Iafrate and E.~Orsingher, \emph{Some results on the brownian meander with
  drift}, Journal of Theoretical Probability (2019).

\bibitem[Kal81]{MR628873}
Olav Kallenberg, \emph{Splitting at backward times in regenerative sets}, Ann.
  Probab. \textbf{9} (1981), no.~5, 781--799. \MR{628873}

\bibitem[Kal02]{MR1876169}
\bysame, \emph{Foundations of modern probability}, second ed., Probability and
  its Applications (New York), Springer-Verlag, New York, 2002. \MR{1876169}

\bibitem[LBDM18]{BlanchetMaxStable}
Zhipeng {Liu}, Jose~H. {Blanchet}, A.~B. {Dieker}, and Thomas {Mikosch},
  \emph{{On Optimal Exact Simulation of Max-Stable and Related Random Fields}},
  to appear in Bernoulli (2018), arXiv:1609.06001.

\bibitem[MT09]{MR2509253}
Sean Meyn and Richard~L. Tweedie, \emph{Markov chains and stochastic
  stability}, second ed., Cambridge University Press, Cambridge, 2009, With a
  prologue by Peter W. Glynn. \MR{2509253}

\bibitem[Pit06]{MR2245368}
J.~Pitman, \emph{Combinatorial stochastic processes}, Lecture Notes in
  Mathematics, vol. 1875, Springer-Verlag, Berlin, 2006, Lectures from the 32nd
  Summer School on Probability Theory held in Saint-Flour, July 7--24, 2002,
  With a foreword by Jean Picard. \MR{2245368}

\bibitem[PJR16]{MR3449801}
Murray Pollock, Adam~M. Johansen, and Gareth~O. Roberts, \emph{On the exact and
  {$\varepsilon$}-strong simulation of (jump) diffusions}, Bernoulli
  \textbf{22} (2016), no.~2, 794--856. \MR{3449801}

\bibitem[PR12]{MR2948693}
Jim Pitman and Nathan Ross, \emph{The greatest convex minorant of {B}rownian
  motion, meander, and bridge}, Probab. Theory Related Fields \textbf{153}
  (2012), no.~3-4, 771--807. \MR{2948693}

\bibitem[PUB12]{MR2978134}
Jim Pitman and Ger\'onimo Uribe~Bravo, \emph{The convex minorant of a {L}\'evy
  process}, Ann. Probab. \textbf{40} (2012), no.~4, 1636--1674. \MR{2978134}

\bibitem[Sat13]{MR3185174}
Ken-iti Sato, \emph{L\'evy processes and infinitely divisible distributions},
  Cambridge Studies in Advanced Mathematics, vol.~68, Cambridge University
  Press, Cambridge, 2013, Translated from the 1990 Japanese original, Revised
  edition of the 1999 English translation. \MR{3185174}

\bibitem[Set94]{MR1309433}
Jayaram Sethuraman, \emph{A constructive definition of {D}irichlet priors},
  Statist. Sinica \textbf{4} (1994), no.~2, 639--650. \MR{1309433}

\bibitem[UB14]{MR3160578}
Ger\'onimo Uribe~Bravo, \emph{Bridges of {L}\'evy processes conditioned to stay
  positive}, Bernoulli \textbf{20} (2014), no.~1, 190--206. \MR{3160578}

\bibitem[UZ99]{MR1745764}
Vladimir~V. Uchaikin and Vladimir~M. Zolotarev, \emph{Chance and stability},
  Modern Probability and Statistics, VSP, Utrecht, 1999, Stable distributions
  and their applications, With a foreword by V. Yu. Korolev and Zolotarev.
  \MR{1745764}

\end{thebibliography}

\section*{Acknowledgements}
JGC and AM are supported by The Alan Turing Institute under the EPSRC grant EP/N510129/1;
AM supported by EPSRC grant EP/P003818/1 and the Turing Fellowship funded by the Programme on Data-Centric Engineering of Lloyd's Register Foundation;
GUB supported by CoNaCyT grant FC-2016-1946 and UNAM-DGAPA-PAPIIT grant IN115217; 
JGC supported by CoNaCyT scholarship 2018-000009-01EXTF-00624 CVU 699336. 

\end{document}